\let\mathcal\mathscr
\numberwithin{equation}{section}
\renewcommand{\phi}{\varphi}
\renewcommand{\rho}{\varrho}
\newcommand{\0}{\mathbf{0}}
\newcommand{\PP}{\mathbb{P}}
\newcommand{\A}{\mathbf{A}}
\newcommand{\FF}{\mathbb{F}}
\newcommand{\ZZ}{\mathbb{Z}}
\newcommand{\ZZp}{\mathbb{Z}_{\mathrm{prim}}}
\newcommand{\NN}{\mathbb{N}}
\newcommand{\QQ}{\mathbb{Q}}
\newcommand{\RR}{\mathbb{R}}
\newcommand{\Gal}{{\rm Gal}}
\renewcommand{\leq}{\leqslant}
\renewcommand{\geq}{\geqslant}
\renewcommand{\bar}{\overline}
\newcommand{\x}{\mathbf{x}}
\newcommand{\y}{\mathbf{y}}
\renewcommand{\v}{\mathbf{v}}
\renewcommand{\u}{\mathbf{u}}
\newcommand{\fo}{\mathfrak{o}}
\newcommand{\fp}{\mathfrak{p}}
\newcommand{\ve}{\varepsilon}
\DeclareMathOperator{\rank}{rank}
\DeclareMathOperator{\Br}{Br}
\DeclareMathOperator{\frob}{Frob}
\DeclareMathOperator{\dens}{dens}
\DeclareMathOperator{\Spec}{Spec}
\DeclareMathOperator{\Fix}{Fix}
\DeclareMathOperator{\Stab}{Stab}
\DeclareMathOperator{\FEt}{\mathbf{FEt}}
\DeclareMathOperator{\Sets}{\mathbf{Sets}}
\DeclareMathOperator{\rad}{rad}
\DeclareMathOperator{\supp}{supp}
\DeclareMathOperator{\Bl}{Bl}
\renewcommand{\t}{\mathbf{t}}
\newcommand{\1}{\mathbf{1}}
\newtheorem{thm}{Theorem}[section]
\newtheorem{cor}[thm]{Corollary}
\newtheorem{prop}[thm]{Proposition}
\newtheorem{lemma}[thm]{Lemma}
\newtheorem{conjecture}[thm]{Conjecture}
\theoremstyle{definition}
\newtheorem{defin}[thm]{Definition}
\newtheorem{rem}[thm]{Remark}
\newtheorem{example}[thm]{Example}
\numberwithin{equation}{section}
\begin{document}
\date{\today}

\title{Paucity of rational points on fibrations with multiple fibres}

\author{Tim Browning}

\address{IST Austria\\
Am Campus 1\\
3400 Klosterneuburg\\
Austria}
\email{tdb@ist.ac.at}

\author{Julian Lyczak}
\address{
Department of Mathematical Sciences\\ University of Bath\\ Claverton Do\-wn\\ Bath BA2
7AY\\ UK}
\email{jl4212@bath.ac.uk}

\author{Arne Smeets}
\address{Department of Mathematics\\
KU Leuven\\
Celestijnenlaan 200B\\
B-3001 Leuven\\
Belgium}

\email{arne.smeets@kuleuven.be}

\subjclass[2010]{14G05 (11G35, 11N36, 
14A21,
14D10)}

\begin{abstract}
Given a family of varieties over the projective line, we  study the density of fibres that are everywhere locally soluble in the case that components of higher multiplicity are allowed.  
We use log geometry to formulate a new sparsity criterion for the existence of everywhere locally soluble fibres and formulate new conjectures that generalise previous work of Loughran--Smeets. These conjectures involve geometric invariants of the associated multiplicity orbifolds on the base of the fibration in the spirit of Campana. We give evidence for the conjectures using Chebotarev's theorem and sieve methods.
\end{abstract}

\date{\today}
\maketitle

\thispagestyle{empty}
\setcounter{tocdepth}{1}
\tableofcontents

\section{Introduction}\label{sec:intro}

Let $X$ be a smooth, proper, geometrically irreducible variety over  $\QQ$, which is equipped with a dominant morphism $\pi:X\to \PP^1$  with geometrically integral generic fibre.  
We shall refer to such fibrations as {\em standard}.
The focus  of this article is on situations  where multiple fibres are present. Work of Colliot-Th\'el\`ene, Skorobogatov and Swinnerton-Dyer \cite{paucity} shows that the set $X(\QQ)$ of $\QQ$-rational points on $X$ is not Zariski dense  when there are at least $5$ geometric double fibres.  
Our  goal is to put this kind of result on a quantitative footing by analysing  the simpler question of
solubility over 
the ring of ad\`eles
 $\mathbf{A}_\QQ$.
Let 
\begin{equation}\label{eq:count}
N_{\text{loc}}(\pi,H,B)=\#\left\{
x\in \PP^1(\QQ)\cap \pi(X(\mathbf{A}_\QQ)): H(x)\leq B
\right\},
\end{equation}
where $H$ is a height function on $\PP^1(\QQ)$.
In general, we will need to allow the height $H$ to be any adelic height on a line bundle $\mathcal O(d)$. However, most of the time we shall use an $\mathcal O(1)$-height.
In this case we will simply write $N_{\text{loc}}(\pi,H,B)=N_{\text{loc}}(\pi,B)$.
Usually we will take the naive height $H(x) = \max\{|x_0|,|x_1|\}$, if $x\in \PP^1(\QQ)$ is represented by a vector $\x=(x_0,x_1)\in \ZZp^2$, in which case it is easy to prove that $\#\{x\in \PP^1(\QQ): H(x)\leq B\}\sim  
\frac{2}{\zeta(2)}
B^{2}$, 
as $B\to \infty$. 

Loughran and Smeets \cite{LS} 
 have shown that 
\begin{equation}\label{eq:upper}
N_{\text{loc}}(\pi,B)\ll \frac{B^2}{(\log B)^{\Delta(\pi)}},
\end{equation}
for a certain exponent $\Delta(\pi)\geq 0$ that is defined in terms of the  data of the fibration.
(Here, as throughout our work, all implied constants are allowed to depend on the fibration $\pi$.)
Roughly speaking,  the size of  $\Delta(\pi) $ 
is determined by the number of non-split fibres of   $\pi$, thereby lending credence to a philosophy put forward by Serre \cite{Ser90} and further developed  by Loughran \cite{L}.
In \cite[Conj.~1.6]{LS} it is conjectured that the upper bound 
\eqref{eq:upper} is sharp provided that the fibre of $\pi$ over every closed  point of $\PP^1$ has an irreducible component of multiplicity one.  (In fact, the work in \cite{LS} works over arbitrary number fields $k$ and concerns fibrations $X\to \PP^n$ over projective space of arbitrary dimension, but we shall restrict to $k=\QQ$ and $n=1$ in our work.)
Our goal is to  explore what happens to $N_{\text{loc}}(\pi,B)$
when the assumption about components of multiplicity one is violated. 

There are relatively few examples in the number theory literature that feature standard fibrations with multiple fibres.
When the generic fibre of $\pi$ is  rationally connected, it follows from work of 
Graber, Harris and Starr \cite{GHS} that every fibre contains a geometrically integral component of multiplicity one. 
In particular, when $\dim X=2$,  we must look to fibrations over $\PP^1$ into curves of positive genus to find examples with  multiple fibres. 
Let $c,d,f\in \QQ[t]$  be non-zero polynomials
such that $f$ is  square-free of even degree and such that $f$ and $c-d$ are coprime. 
Let $\pi \colon X\to \PP^1$ be a smooth, proper model of the affine variety cut out by the pair of equations
\begin{equation}\label{eq:colliot}
x^2-c(t)=f(t)y^2, \quad x^2-d(t)=f(t)z^2.
\end{equation}
Then it follows from 
\cite[Prop.~4.1]{paucity}  that all the fibres of $\pi$ over the zeros of $f$ are double fibres, and  that the generic fibre is a geometrically integral curve whose projective model is isomorphic to a curve of genus one. 
When $\deg(f)\geq 6$, as pointed out by Loughran and Matthiesen \cite[Thm.~1.4]{LM}, 
the argument of  \cite[Cor.~2.2]{paucity} implies 
that $N_{\text{loc}}(\pi,B)=O(1)$.
Further examples involving genus $2$ fibrations over $\PP^1$ have been worked out by 
Stoppino \cite{stop}.

In the spirit of Campana
\cite{campana},
our approach to this problem comes from relating the arithmetic of $\pi \colon X\to\PP^1$ to the
arithmetic of the {\em orbifold base} $(\PP^1,\partial_\pi)$, for a certain   $\QQ$-divisor $\partial_\pi$, in the sense of Definition \ref{def:rose}.
 For each closed point  $D\in (\PP^1)^{(1)}$, we let $m_D\geq 1$ denote the minimum multiplicity of the irreducible components of $\pi^{-1}(D)$. Then we may define
\begin{equation}\label{eq:def_partial}
\partial_\pi=\sum_{D\in (\PP^1)^{(1)}} \left(1-\frac{1}{m_D}\right) [D].
\end{equation}
With this notation, we make the following conjecture.

\begin{conjecture}\label{con1}
Let  $\pi:X\to \PP^1$ be a standard fibration such that the $\QQ$-divisor  $-(K_{\PP^1}+\partial_\pi)$ is ample.  Then 
$$
N_{\text{loc}}(\pi,B)= O_{\ve}\left(B^{2-\deg \partial_\pi+\ve }\right),
$$
for any $\ve>0$.
\end{conjecture}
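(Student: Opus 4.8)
The plan is to split the argument into a geometric--local step, which converts everywhere local solubility of a fibre into an arithmetic constraint on the base point, and a counting step, which reduces to estimating a Campana-type point count on $(\PP^1,\partial_\pi)$.

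For the first step I would argue as in \cite{LS,paucity}. Fix a projective, flat, regular model of $\pi$, and let $S$ be the finite set of primes consisting of the primes of bad reduction, the prime $2$, and the primes dividing the leading coefficients of a chosen system of defining forms. For each closed point $D\in\supp\partial_\pi$ fix a binary form $G_D\in\ZZ[x_0,x_1]$, irreducible over $\QQ$ and of degree $\deg D$, that cuts out $D$; the $G_D$ are pairwise coprime. Call an integer $n$ \emph{$m$-full away from $S$} if $p^m\mid n$ for every prime $p\notin S$ with $p\mid n$. The claim is that if $[x_0:x_1]$, with $(x_0,x_1)\in\ZZp^2$, lies in $\pi(X(\A_\QQ))$, then $G_D(x_0,x_1)$ is $m_D$-full away from $S$ for every $D\in\supp\partial_\pi$. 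Indeed, for $p\notin S$ with $p\mid G_D(x_0,x_1)$ the point specialises modulo $p$ into $\pi^{-1}(D)$, and a $\ZZ_p$-point of the model lying above it must reduce to a smooth point of some component of $\pi^{-1}(D)$; an intersection-multiplicity computation then shows that $v_p(G_D(x_0,x_1))$ is a positive multiple of the multiplicity of that component, hence is $\ge m_D$. It is exactly here that the minimality of $m_D$ among the component multiplicities enters: the minimal-multiplicity component produces the weakest, and hence unavoidable, local constraint. The finitely many primes of $S$, and the non-multiple non-split fibres (which contribute only a lower-order logarithmic saving), may be discarded, so that
\[
N_{\text{loc}}(\pi,B)\;\ll\;\#\bigl\{(x_0,x_1)\in\ZZp^2:\ \max\{|x_0|,|x_1|\}\le B,\ G_D(x_0,x_1)\text{ is }m_D\text{-full away from }S\ \forall D\in\supp\partial_\pi\bigr\}.
\]

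It remains to bound the right-hand side by $O_\ve(B^{2-\deg\partial_\pi+\ve})$. The heuristic is clear: a random primitive $(x_0,x_1)$ in the box makes $G_D(x_0,x_1)$, of size $\asymp B^{\deg D}$, $m_D$-full with probability $\asymp B^{-\deg D(1-1/m_D)}$, and independence of these events over $D$ gives $B^2\prod_D B^{-\deg D(1-1/m_D)}=B^{2-\deg\partial_\pi}$. Note that the ampleness hypothesis $\deg\partial_\pi<2$ makes this a genuine power saving and, since $\deg D(1-1/m_D)\ge\tfrac12\deg D$, forces $\sum_D\deg D\le 3$; only finitely many shapes of $\partial_\pi$ can occur, namely a single point of degree $1$, $2$ or $3$, two points of degree $1$, a point of degree $1$ together with one of degree $2$, or three points of degree $1$. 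The shapes with $\max_D\deg D\le 2$ I would handle directly. If $\partial_\pi=(1-1/m)[D]$ with $\deg D\le 2$, then writing $G_D(x_0,x_1)=u\,n$ with $u$ supported on $S$ and $n$ an $m$-full integer coprime to $S$ (so $|u|,|n|\ll B^{\deg D}$), there are $O_\ve(B^{\deg D/m+\ve})$ choices for $(u,n)$, and for each fixed value $N=un$ the equation $G_D(x_0,x_1)=N$ has $O_\ve(B^{\max(0,2-\deg D)+\ve})$ primitive solutions in the box (the divisor bound for $\deg D=2$, an elementary count of integer points on a line for $\deg D=1$); when $\deg D\le 2$ these two exponents sum to exactly $2-\deg\partial_\pi$. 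If $\partial_\pi$ is supported on two rational points one moves them to $0$ and $\infty$, so that $G_{D_1}=x_0$, $G_{D_2}=x_1$ and the conditions decouple into "$x_0$ is $m_1$-full away from $S$" and "$x_1$ is $m_2$-full away from $S$", yielding $O_\ve(B^{1/m_1+1/m_2+\ve})=O_\ve(B^{2-\deg\partial_\pi+\ve})$.

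The remaining shapes --- a single closed point of degree $3$ (which by the degree bound carries a double fibre, so $m_D=2$), a degree-$1$ point together with a degree-$2$ point, and three rational points --- are where I expect the genuine difficulty to lie. For these the naive estimates above overshoot the target by a factor of order $B$: Thue's theorem and the count of $m$-full integers only control the number of solutions per admissible value of $G_D$, whereas the truth requires that $G_D$ not take $m_D$-full (equivalently, perfect $m_D$-th-power) values appreciably more often than a random integer of the same size --- a strong equidistribution statement amounting, in effect, to counting integral Campana points on $(\PP^1,\partial_\pi)$ of bounded height in the spirit of \cite{campana}. I would attack these cases by one of: (i) the circle method, after recording the condition as the system $G_D(x_0,x_1)=c_D\,y_D^{m_D}$ and introducing the auxiliary variables $y_D$, when the number of variables and the shape of the forms permit; (ii) Heath-Brown's $p$-adic determinant method applied to the affine surface $G_D(x_0,x_1)=y^2$ in the degree-$3$ case; or (iii) the norm-form structure, factoring $G_D$ as a norm from the splitting field $K=\QQ(\theta)$, rewriting "$m_D$-full away from $S$" as the statement that the ideal $(x_0-\theta x_1)$ is, away from $S$, an $m_D$-th power times a product of primes of residue degree $\ge m_D$, and then counting such ideals of bounded norm inside the rank-two lattice $\ZZ\cdot 1+\ZZ\cdot\theta$. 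Chebotarev's theorem enters in step (iii), and in pinning down the admissible residue classes at the primes of $S$ and near the non-split fibres, while sieve methods are what detect the "$m_D$-full away from $S$" condition uniformly. My expectation is that, for these last shapes, Conjecture~\ref{con1} can be established only under further hypotheses on the forms $G_D$ or on the generic fibre of $\pi$ --- which is presumably the form the supporting evidence takes, rather than a proof of the statement in full.
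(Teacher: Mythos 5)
What you have written is a reduction plus a partial argument, and on that reading it is largely accurate; but the statement under review is a conjecture, and the paper does not prove it. The authors record explicitly that for $\partial_\pi=\tfrac{1}{2}[0]+\tfrac{1}{2}[1]+\tfrac{1}{2}[\infty]$ the best known upper bound has exponent $3/5+\ve$ rather than the conjectured $1/2+\ve$, so your closing assessment --- that a full proof is out of reach and the supporting evidence in the paper is partial --- is correct.

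Your geometric--local step is precisely the paper's sparsity criterion (Theorem~\ref{thm:lifting} and Corollary~\ref{cor:sparsity}, proved via log geometry and the logarithmic Hensel lemma): if $X_Q(\QQ_p)\neq\emptyset$ for $p\notin S$ and $p\mid G_D(\x)$, then $v_p(G_D(\x))\geq m_D$, with a supplementary Frobenius condition at equality that produces the logarithmic savings in Theorems~\ref{t:1} and~\ref{t:2}. Your classification of the possible supports of $\partial_\pi$ under ampleness, and your elementary counts for a single closed point of degree at most $2$ and for two rational points, are sound; the paper's Theorems~\ref{t:1} and~\ref{t:2} treat only the degree-$1$ shapes (with sharper $\log$ exponents via the large sieve), so your divisor-bound argument for a single degree-$2$ point is a modest but genuine addition. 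You also correctly single out the remaining shapes as the true obstacle. One caveat about your proposed methods for them: the Chebotarev and Frobenian ingredients in your option~(iii) govern the power of $\log B$ and are therefore invisible to the $B^\ve$ statement of Conjecture~\ref{con1}; what is actually missing is a bound on how often the forms $G_D$ take $m_D$-full values simultaneously that improves on Thue-type fibre estimates, and neither the paper nor the present literature provides this for the $(1,1,1)$, the $(1,2)$, or the single degree-$3$ shape.
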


Note that $-\deg(K_{\PP^1}+\partial_\pi)=2-\deg \partial_\pi.
$ Hence
$-(K_{\PP^1}+\partial_\pi)$ is ample if and only if $\deg \partial_\pi<2$.
The main feature of Conjecture~\ref{con1} is 
that we expect
$N_\text{loc}(\pi,B)$ to be much smaller in the presence of  multiple fibres.
Our remaining results give evidence towards this, as well as a proposal about the replacement of $B^\ve$ by an explicit  {\em non-positive} power of $\log B$.

\subsection{Upper bounds}

For each closed point $D\in (\PP^1)^{(1)}$,  let
$S_D$  be the set of geometrically irreducible components of $\pi^{-1}(D)$ of multiplicity $m_D$
and let $\kappa(D)$ be the residue field. 
 For any number field  $N/\QQ$, we write
\begin{equation}\label{eq:dD}
\delta_{D,N}(\pi)=
\frac{\#\{
\sigma\in \Gamma_{D,N}:   \text{$\sigma$ acts with a fixed point on 
$S_{D}$}\}}
{\#\Gamma_{D,N}},
\end{equation}
where 
$\Gamma_{D,N}$  is a finite group through which the action of $\Gal(\overline{N}/N)$ on $S_{D}$ factors.
(We take 
$\delta_{D,N}(\pi)=0$  when no such components exist.)
Note that 
\begin{equation}\label{eq:01}
0\leq \delta_{D,N}(\pi) \leq 1.
\end{equation}
Moreover, we shall write 
$\delta_{D}(\pi)=
\delta_{D,\kappa(D)}(\pi)$. 
When 
$\pi^{-1}(D)$ has components of multiplicity  one, this  agrees with the definition given by Loughran and Smeets \cite[Eq.~(1.4)]{LS}.
A natural analogue of the exponent appearing in \cite[Thm.~1.2]{LS}
is then 
\begin{equation}\label{eq:Delta}
\Delta(\pi)=\sum_{D\in (\PP^1)^{(1)}} \left(1-\delta_D(\pi)\right),
\end{equation}
which agrees with the exponent 
appearing in  \eqref{eq:upper} whenever  $\pi^{-1}(D)$ 
contains a multiplicity one component for every $D\in (\PP^1)^{(1)}$.

The following upper bound treats the case of one multiple fibre above a degree $1$ point of $\PP^1$, and is consistent with Conjecture~\ref{con1}.

\begin{thm}\label{t:1}
Let  $\pi:X\to \PP^1$ be a standard fibration with 
a unique  multiple fibre at $0$. 
Then 
$$
N_{\text{loc}}(\pi,B)\ll \frac{B^{2-\deg \partial_\pi} }{(\log B)^{\Delta(\pi)}},
$$
where
$
\Delta(\pi)$ is given by \eqref{eq:Delta}.
\end{thm}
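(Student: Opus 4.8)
The plan is to reduce to a sieve estimate for points $x\in\PP^1(\QQ)$ with $H(x)\le B$ lying in $\pi(X(\A_\QQ))$, and to exploit the single multiple fibre at $0$ to win the extra power-saving $B^{-\deg\partial_\pi}$ on top of the Loughran--Smeets logarithmic saving. Write $m=m_0$ for the multiplicity of the fibre over $0$, so that $\deg\partial_\pi = 1-\tfrac1m$. Parametrise $x\in\PP^1(\QQ)$ by $\x=(x_0,x_1)\in\ZZp^2$ with $H(\x)=\max\{|x_0|,|x_1|\}\le B$. The local condition at the finite place $p$ — that the fibre $X_x$ has a $\QQ_p$-point — is, away from a bad finite set $S$ of primes, governed by the reduction type of $\pi$ over $p$. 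The key structural input is that local solubility of $X_x$ at a place $p$ of good reduction for the fibre over the specialisation of $x$ forces a congruence condition at $p$ whenever $p$ divides a ``bad'' coordinate, and in particular: if $x$ reduces to $0\in\PP^1(\FF_p)$ (i.e.\ $p\mid x_0$ in a suitable chart), then $X_x(\QQ_p)\neq\emptyset$ typically forces $v_p(x_0)\equiv 0\pmod m$, because near a multiple fibre of multiplicity $m$ the total space looks étale-locally like $u^m = (\text{local coordinate})$. This is the mechanism already present in \cite{paucity} and \cite{LM}, and it is what produces the saving of a full power of $B$ rather than just a logarithmic factor.

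The main steps, in order, are as follows. First I would set up the affine chart where $0$ corresponds to $x_0=0$, so that the relevant arithmetic invariant of $x=[x_0:x_1]$ near $0$ is $v_p(x_0)$ for primes $p\mid x_0$; the remaining finitely many bad fibres lie over points $D\neq 0$, each of which has a multiplicity-one component and hence contributes exactly as in \cite{LS}. Second, I would make precise the local solubility conditions: for $p\notin S$, (a) if $x$ does not reduce into any bad fibre mod $p$, then $X_x(\QQ_p)\neq\emptyset$ automatically (smooth proper good reduction, Lang--Weil / Hensel); (b) if $x$ reduces to a bad point $D\neq 0$ mod $p$, solubility holds precisely when $\frob_p$ acts with a fixed point on $S_D$ — this is the $\delta_D(\pi)$ condition; (c) if $x$ reduces to $0$ mod $p$, then $X_x(\QQ_p)\neq\emptyset$ implies $m\mid v_p(x_0)$ — this is the new ingredient. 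Third, I would therefore bound $N_{\mathrm{loc}}(\pi,B)$ by the number of $\x\in\ZZp^2$ with $H(\x)\le B$ such that $x_0$ is an $m$-th power up to bounded factors from $S$ (so $x_0 = s\,w^m$ with $s$ ranging over a fixed finite set and $w\in\ZZ$), AND such that for every prime $p$ dividing $x_0 x_1\,(\text{discriminant})$ outside $S$ and landing in some bad fibre $D\neq 0$, the Chebotarev condition in (b) holds. Fourth, for fixed $s$ the constraint $x_0 = s w^m$ leaves $w$ ranging over an interval of length $\asymp (B/|s|)^{1/m}$ and $x_1$ over an interval of length $\asymp B$; so even before imposing any congruences at the points $D\neq 0$, the count is $\ll B^{1+1/m} = B^{2-\deg\partial_\pi}$. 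Fifth, on top of this, the congruence conditions at the $D\neq 0$ are ``frobenian'' of density $\delta_D(\pi)$ at each prime, and a standard half-dimensional sieve (as in \cite{LS}, e.g.\ Friedlander--Iwaniec / a fundamental-lemma-type bound) over the variable $x_1$ (which still ranges over a genuine interval of length $\asymp B$) produces the saving $(\log B)^{-\sum_{D\neq 0}(1-\delta_D(\pi))}$; since $\delta_0(\pi)=1$ here (the fibre over $0$ has $S_0$ empty or the action is irrelevant once $m\mid v_p(x_0)$ is imposed, so $D=0$ contributes $1-\delta_0(\pi)=0$ to $\Delta(\pi)$ — indeed by convention $\delta_0(\pi)=0$, but the $m$-th power constraint already subsumes its effect and one checks it contributes the full power saving, not a log), the exponent $\sum_{D\neq 0}(1-\delta_D(\pi))$ equals $\Delta(\pi)$. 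Combining, $N_{\mathrm{loc}}(\pi,B)\ll B^{2-\deg\partial_\pi}/(\log B)^{\Delta(\pi)}$.

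The hard part will be Step (c): rigorously showing that $X_x(\QQ_p)\neq\emptyset$ forces $m\mid v_p(x_0)$ for all but finitely many $p$, uniformly. This requires a careful local analysis of the model $\pi$ near the multiple fibre — choosing a regular proper model, understanding the scheme structure of $\pi^{-1}(0)$ and its multiplicity-$m$ components, and applying a Hensel/valuation argument (or the theory of the log structure alluded to in the abstract) to see that a $\QQ_p$-point specialising into the multiple fibre can only exist when the ramification index $m$ divides $v_p(x_0)$. One must also handle the bad primes $S$ and the interaction when $x$ reduces to $0$ modulo $p$ for several $p$ simultaneously, but that is a bookkeeping matter once (c) is in place. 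The secondary technical point is ensuring the sieve over $x_1$ is legitimate after fixing $x_0 = sw^m$: the conditions at distinct primes $D\neq 0$ are independent by Chebotarev, the sifting variable ranges over a full interval, and the relevant sieve dimension is $\le 1$, so the Loughran--Smeets machinery applies essentially verbatim.
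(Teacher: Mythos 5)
Your strategy has the right architecture — a power-of-$B$ saving from the multiple fibre combined with a log-power saving from Chebotarev conditions, assembled by a sieve — and this does match the paper, which deduces Theorem~\ref{t:1} as the $m_\infty=1$, $d=1$ specialisation of the large-sieve estimate Proposition~\ref{prop:2fibres}. You are also right that the hard point is the local-solubility criterion near the multiple fibre. However, there are two concrete gaps.

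First, Step~(c) is incorrect as stated. The claim that $X_x(\QQ_p)\neq\emptyset$ forces $m\mid v_p(x_0)$ for $p\notin S$ does not hold in general, and the ``looks \'etale-locally like $u^m = t$'' heuristic is only valid when the fibre over $0$ is geometrically irreducible (or at least has all components of multiplicity exactly $m$). The correct criterion, which is precisely what the log geometry of Section~\ref{s:sparsity} establishes (Theorem~\ref{thm:lifting} and Corollary~\ref{cor:sparsity}), is that $v_p(x_0)$ must lie in the positive $\NN$-linear span of the multiplicities of the components of $\pi^{-1}(0)$ passing through the $\FF_p$-point that are fixed by $\frob_p$. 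The weakened form actually used in the counting is: either $v_p(x_0)>m_0$, or $v_p(x_0)=m_0$ and $p$ lies in the Frobenian set $T_0$. In particular any value $v_p(x_0)>m_0$ is permitted, divisible by $m_0$ or not. Since your set $\{x_0 = s\,w^m : s\ \text{$S$-supported}\}$ does not contain all such $x_0$ (e.g.\ $x_0 = p^{m_0+1}$ is locally fine at $p$ but is excluded), the constraint you are imposing is strictly stronger than necessary, and so what you are counting is not an upper bound for $N_{\text{loc}}(\pi,B)$.

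Second, your accounting of the $D=0$ term of $\Delta(\pi)$ is internally inconsistent (you assert both $\delta_0(\pi)=1$ and $\delta_0(\pi)=0$) and misses a genuine saving. Under the hypotheses of Theorem~\ref{t:1} nothing pins down $\delta_0(\pi)$; it can be anything in $[0,1]$ depending on the Galois action on $S_0$. The fibre over $0$ contributes \emph{both} the power saving $B^{1/m_0-1}$ (from $v_p(x_0)\ge m_0$) \emph{and} a logarithmic factor $(\log B)^{-(1-\delta_0(\pi))}$, the latter coming from the Chebotarev condition $p\in T_0$ imposed on those primes with $v_p(x_0)=m_0$ exactly. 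In the paper this is visible in the factorisation $y_0 = b_0 s_0^{m_0/d} t_0^{m_0/d} u_0$ in Section~\ref{s:prelim-large}, where $t_0$ (the square-free part carrying exactly the minimal valuation) is restricted by $\1_{T_0}$, feeding Lemma~\ref{lem:Ti} and then Lemma~\ref{lem:L(Q)}. Your replacement of $x_0$ by a pure $m$-th power erases exactly the variable ($t_0$) that carries this log saving, so even if you could repair Step~(c) your argument as written would only deliver the exponent $\sum_{D\neq 0}(1-\delta_D(\pi))$, which equals $\Delta(\pi)$ only in the special case $\delta_0(\pi)=1$.

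To fix both issues you would need to (i) prove a lifting criterion of the form in Theorem~\ref{thm:lifting} via log smoothness and logarithmic Hensel lifting (Proposition~\ref{prop:loghensel}), rather than the \'etale-local $u^m=t$ picture, and (ii) run the sieve with $x_0$ decomposed so as to isolate the primes of exact valuation $m_0$ and impose the $T_0$-condition on them, as in \eqref{eq:LRR}.
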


It is tempting to suppose that the same estimate continues to hold when there is more than one closed point of $\PP^1$ above which multiple fibres exist.
However, in Theorem \ref{t:3}, we shall illustrate that a smaller exponent than $\Delta(\pi)$ is sometimes necessary.

Let $\pi:X\to \PP^1$ be a standard fibration and 
let
  $D\in (\PP^1)^{(1)}$,
which we suppose is defined by an irreducible binary form $g\in \QQ[x,y]$. 
Assume first that $g(1,0)\neq 0$. Then the  residue field is   $\kappa(D)=\QQ[x]/(g(x,1))$.
Moreover, 
for any $d\in \NN$ and any $v\in \QQ$, let 
$
N_{D,d,v}=\QQ[x]/(h(x)),
$
where 
$$
h(x)=g(x^d,v).
$$
For typical $v$ this forms a number field of degree $\deg(g)+d$, but in general forms  an \'etale algebra, since $h$ is not necessarily irreducible, with a factorisation
\begin{equation}\label{eq:wood}
N_{D,d,v}=N_{D,d,v}^{(1)}\times\dots\times N_{D,d,v}^{(s_D)}.
\end{equation}
It still remains to deal with the case $g(1,0)=0$. But then $D=\infty$ and it readily follows that 
$\kappa(\infty)=\QQ[y]/(g(1,y))=\QQ$ and 
$N_{\infty,d,v}=\QQ[y]/(g(1,vy^d))=\QQ$,
for any $v\in \QQ$.
We may now define
\begin{equation}\label{eq:theta-v}
\Theta_v(\pi)=
\sum_{D\in \left(\PP^1\right)^{(1)}} \sum_{k=1}^{s_D}\left(1-\delta_{D,N_{D,d,v}^{(k)}}(\pi)\right),
\end{equation}
in the notation of \eqref{eq:dD}.
Our main upper bound  is as follows.

\begin{thm}\label{t:2}
Let  $\pi:X\to \PP^1$ be a standard fibration with 
 multiple fibres at $0$ and $\infty$, and nowhere else. 
Let   $d=\gcd(m_0,m_\infty)$. Then 
$$
N_{\text{loc}}(\pi,B)\ll \frac{B^{2-\deg \partial_\pi} }{(\log B)^{\min_{v\in 
\QQ^\times/\QQ^{\times,d}} \Theta_v(\pi)}}.
$$
\end{thm}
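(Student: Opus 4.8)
The plan is to reduce the estimate to a lattice-point count and then run an upper-bound sieve, following \cite{LS} and the proof of Theorem~\ref{t:1}, the new ingredient being that the two multiple fibres force a power structure on \emph{both} coordinates and that these two structures interact. Write a general point of $\PP^1(\QQ)$ as $x=a/b$ with $(a,b)\in\ZZp^2$, so that the naive height is $\max\{|a|,|b|\}$. Fixing a suitable finite set $T$ of primes and a nice integral model of $\pi$, one first reads off from reduction modulo each $p\notin T$ a list of necessary conditions for $x\in\pi(X(\QQ_p))$: (i) a condition at the fibre over $0$, which in particular forces $a$ to be $m_0$-full (i.e.\ $p\mid a$ implies $p^{m_0}\mid a$) and constrains $\frob_p$ relative to the set $S_0$ of multiplicity-$m_0$ components at the primes dividing $a$; (ii) the mirror condition at $\infty$, forcing $b$ to be $m_\infty$-full and constraining $\frob_p$ relative to $S_\infty$; and (iii) for each remaining closed point $D\in(\PP^1)^{(1)}$, cut out by a binary form $g$, the requirement that every $p\notin T$ dividing $g(a,b)$ satisfy ``$\frob_p$ fixes a point of $S_D$''. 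A preliminary reduction isolating the dominant contribution allows one to assume $a=u\alpha^{m_0}$ and $b=w\beta^{m_\infty}$, with $u,w$ in finite sets and $\alpha,\beta$ coprime to $T$; the points $x$ not of this shape contribute less, since (i)--(ii) are strictly more restrictive at primes where the local point can only reach a component of multiplicity exceeding $m_0$. As multiple fibres occur only at $0$ and $\infty$ we have $2-\deg\partial_\pi=1/m_0+1/m_\infty$, and the ranges $|\alpha|\le B^{1/m_0}$, $|\beta|\le B^{1/m_\infty}$ already yield the clean main term $\ll B^{1/m_0+1/m_\infty}=B^{2-\deg\partial_\pi}$.

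The heart of the argument, and the point of departure from Theorem~\ref{t:1}, is the analysis of the conditions (iii) after this substitution. Fix $u,w$ and let $v\in\QQ^\times/\QQ^{\times,d}$ be the class determined by $u$ and $w$, where $d=\gcd(m_0,m_\infty)$. For $D\ne 0,\infty$ and $p\notin T$ with $p\nmid ab$, the congruence $g(a,b)\equiv 0\pmod p$ amounts to $a/b\equiv\theta\pmod p$ for some root $\theta\in\FF_p$ of $g(x,1)$; since $a$ lies modulo $p$ in the coset $u\cdot(\FF_p^\times)^{m_0}$ and $b$ in $w\cdot(\FF_p^\times)^{m_\infty}$, and since in the cyclic group $\FF_p^\times$ the product of the subgroup of $m_0$-th powers and the subgroup of $m_\infty$-th powers is precisely the subgroup of $d$-th powers, the quotient $a/b$ ranges over the coset $v\cdot(\FF_p^\times)^d$. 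Hence such a prime $p$ can divide $g(a,b)$ for admissible $\alpha,\beta$ exactly when $\theta/v$ is a $d$-th power modulo $p$, i.e.\ when $p$ has a degree-one prime in the \'etale algebra $N_{D,d,v}=\QQ[x]/(g(x^d,v))$ of \eqref{eq:wood}. Consequently the Frobenius conditions attached to $D$ are precisely those measured by the factors $N_{D,d,v}^{(k)}$ and the densities $\delta_{D,N_{D,d,v}^{(k)}}(\pi)$ of \eqref{eq:dD}; together with (i)--(ii), which contribute the terms $D=0$ and $D=\infty$ of \eqref{eq:theta-v}, namely $1-\delta_0(\pi)$ and $1-\delta_\infty(\pi)$, this accounts for the full exponent $\Theta_v(\pi)$. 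An upper-bound sieve applied to the pairs $(\alpha,\beta)$ in $[1,B^{1/m_0}]\times[1,B^{1/m_\infty}]$, removing for each $D$ and each $p$ the residue classes which make $g(u\alpha^{m_0},w\beta^{m_\infty})$ divisible by a prime with $\frob_p$ bad for $S_D$ — the relevant local densities being computed by Chebotarev's theorem, exactly as in \cite[Thm.~1.2]{LS} and Theorem~\ref{t:1} — then gives $\ll B^{2-\deg\partial_\pi}/(\log B)^{\Theta_v(\pi)}$ for each fixed pair $(u,w)$. Summing over the finitely many pairs and bounding the exponent below by $\min_{v\in\QQ^\times/\QQ^{\times,d}}\Theta_v(\pi)$ yields the theorem; note in particular that when $d=1$ one has $N_{D,1,v}=\kappa(D)$ for every $D$, so $\Theta_v(\pi)=\Delta(\pi)$, in line with the naive extension of Theorem~\ref{t:1}.

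The \textbf{main obstacle} is the local analysis at the two multiple fibres together with its interface with the sieve. When $\pi^{-1}(0)$ is not simply $m_0$ times a prime divisor but carries several components of distinct multiplicities, one must produce a necessary condition for $x\in\pi(X(\QQ_p))$ that simultaneously controls the $m_0$-fullness of $a$ needed for the main term and isolates the Frobenius action on the multiplicity-$m_0$ components $S_0$ — not on all components — so that the correct exponent $\Theta_v(\pi)$, rather than a smaller one, is obtained; the contributions from primes at which the local point is forced onto a higher-multiplicity component are more constrained, and showing that they are genuinely of smaller order requires care, as a naive splitting need not suffice. This is exactly where the orbifold divisor $\partial_\pi$ enters. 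A further source of difficulty is uniformity: the quantities being sieved depend on the auxiliary parameters $u,w$, the two variables live in boxes of very different sizes, and the power of $\log B$ must come out exactly, which demands control of the Frobenius densities and the sieve error terms uniformly in these parameters. By contrast, the terminal cases — $D=\infty$, where $N_{\infty,d,v}=\QQ$, and the bookkeeping at the finitely many bad primes — are routine.
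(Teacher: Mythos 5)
Your overall strategy matches the paper's: read off necessary local conditions via the sparsity criterion, factor out the forced power structure $a=u\alpha^{m_0}$, $b=w\beta^{m_\infty}$, identify the residual Frobenius constraints at each non-split $D$ with the \'etale algebras $N_{D,d,v}$ (the coset observation that $(\FF_p^\times)^{m_0}\cdot(\FF_p^\times)^{m_\infty}=(\FF_p^\times)^d$ is exactly the idea behind \eqref{eq:blue_bottle} in Lemma~\ref{lem:exp-omega}), and then run an upper-bound sieve; the paper uses the large sieve (Lemma~\ref{large}) where you say ``an upper-bound sieve,'' which is a cosmetic difference.

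There is, however, a genuine gap where you write that $u,w$ lie in \emph{finite} sets, and then conclude by ``summing over the finitely many pairs.'' This is false. The parameters $u,w$ (equivalently the classes $v=v_1/v_0\in\QQ^\times/\QQ^{\times,d}$ with $v_0,v_1$ free of $d$-th powers) range over an infinite set: an $m_0$-full $a\leq B$ can have a $d$-th-power-free part $v_0$ as large as roughly $B^{1/2}$, and all such $v_0$ occur. Consequently you are summing infinitely many terms of the form $B^{2-\deg\partial_\pi}/(\log B)^{\Theta_v(\pi)}$, and the naive bound $\sum_v \ll \max_v$ is not available. To close the argument one must prove a bound \emph{uniform in $v$} with a coefficient that decays fast enough in $|v_0v_1|$ for the infinite sum to converge. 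This is precisely what Proposition~\ref{prop:2fibres} provides, through the factor $c_{v,\ve}\asymp |v_0v_1|^\ve/\bigl(\rad(v_0)|v_0|^{1/m_0}\rad(v_1)|v_1|^{1/m_\infty}\bigr)$, together with the dichotomy between $|v_0v_1|\leq B^\ve$ (logarithmic saving, summable weights) and $|v_0v_1|>B^\ve$ (power saving via the crude first part of that proposition and the convergence of $\sum_n 1/(\rad(n)n^\delta)$). Your sketch omits both the uniformity and the tail estimate, and as stated the ``sum over finitely many pairs'' step does not go through. A related, smaller omission: to know that the minimum over the infinite set $\QQ^\times/\QQ^{\times,d}$ is attained (and hence meaningful as an exponent), one needs Theorem~\ref{t:remark}, which guarantees $\Theta_v(\pi)$ takes only finitely many values.
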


It will be convenient to  put 
\begin{equation}\label{eq:put}
\Theta(\pi)=
\min_{v\in  \QQ^\times/\QQ^{\times,d}} \Theta_v(\pi).
\end{equation}
Let us first note that
 $\Theta(\pi)\geq 0$, by  \eqref{eq:01}.  
Secondly, $\Delta(\pi)$ and $\Theta(\pi)$ can be different; in Theorem \ref{t:3} we will see an example with $\Theta(\pi)=0$, but $\Delta(\pi)=1$. However, we will see that
\begin{equation}\label{eq:check}
	\Theta(\pi)=\Delta(\pi), \quad \text{if $\gcd(m_0,m_\infty)=1$}.
\end{equation}
The following result shows that there are only finitely many
values that $\Theta_v(\pi)$ can take.

 \begin{thm}\label{t:remark}
Let $\pi:X\to \PP^1$ be a standard fibration and let  $D\in (\PP^1)^{(1)}$. 
Let $E$ be the field of definition of the elements of $S_D$ and let 
$N/\QQ$ be a number field. 
Then 
$
\delta_{D,N}(\pi)=
\delta_{D,N\cap E^{\text{normal}}}(\pi),
$
where $E^{\text{normal}}$ is the 
normal closure  of $E$. 
 \end{thm}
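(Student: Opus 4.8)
The plan is to reduce the statement to elementary Galois theory by recognising $\delta_{D,N}(\pi)$ as an invariant of a single finite permutation group. Let $\rho\colon\Gal(\bar\QQ/\QQ)\to\mathrm{Sym}(S_D)$ be the action on $S_D$ and let $M\subseteq\bar\QQ$ be the finite Galois extension of $\QQ$ cut out by $\ker\rho$. By construction every element of $S_D$ is defined over $M$, and since $M$ is Galois and contains each field of definition of an element of $S_D$ it contains $E^{\text{normal}}$; conversely $E^{\text{normal}}$ is a Galois extension over which each element of $S_D$ is defined (this is what the hypothesis on $E$ provides), so $\Gal(\bar\QQ/E^{\text{normal}})\subseteq\ker\rho$ and $M\subseteq E^{\text{normal}}$. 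Thus $M=E^{\text{normal}}$, and $\rho$ factors as $\Gal(\bar\QQ/\QQ)\twoheadrightarrow\Gal(M/\QQ)\hookrightarrow\mathrm{Sym}(S_D)$. Pinning down this identification, together with unravelling the conventions in \eqref{eq:dD}, is the only slightly delicate point; everything after it is bookkeeping.

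Next I would note that in \eqref{eq:dD} one is free to take $\Gamma_{D,N}$ to be the image of $\Gal(\bar N/N)=\Gal(\bar\QQ/N)$ in $\mathrm{Sym}(S_D)$. Indeed, if $\Gamma\twoheadrightarrow\Gamma'$ are two finite groups through which the action factors, then the fibres of the surjection all have the same cardinality and an element of $\Gamma$ acts with a fixed point on $S_D$ if and only if its image in $\Gamma'$ does; so replacing $\Gamma$ by $\Gamma'$ multiplies numerator and denominator of \eqref{eq:dD} by the same integer. Hence $\delta_{D,N}(\pi)$ depends only on the subgroup $\rho(\Gal(\bar\QQ/N))\subseteq\mathrm{Sym}(S_D)$, equivalently on the subgroup of $\Gal(M/\QQ)$ which is the image of the restriction map $\Gal(\bar\QQ/N)\to\Gal(M/\QQ)$; by the fundamental theorem of Galois theory that image is $\Gal(M/M\cap N)$, since its fixed field in $M$ is $M\cap N$.

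The core of the proof is then immediate: applying the previous observation with $N$ replaced by $N\cap M$ and using $M\cap(N\cap M)=N\cap M$, the subgroup of $\Gal(M/\QQ)$ attached to $N\cap M$ is $\Gal(M/N\cap M)$, which is literally the same subgroup as the one attached to $N$. Therefore $\Gamma_{D,N}=\Gamma_{D,N\cap M}$ as permutation groups on $S_D$, and $\delta_{D,N}(\pi)=\delta_{D,N\cap E^{\text{normal}}}(\pi)$, as claimed. As a byproduct this also yields the finiteness assertion that motivates the theorem: the quantities $\delta_{D,N^{(k)}_{D,d,v}}(\pi)$ entering \eqref{eq:theta-v} take only the finitely many values $\delta_{D,N'}(\pi)$ with $N'$ a subfield of $E^{\text{normal}}$, so $\Theta_v(\pi)$ ranges over a finite set. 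I do not expect a serious obstacle anywhere; the only thing requiring care is the bookkeeping around "acts with a fixed point" being a function of the image permutation group alone, which is precisely the point isolated in the second paragraph.
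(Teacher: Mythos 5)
Your proof is correct, and it is genuinely more direct than the paper's. The paper proves Theorem~\ref{t:remark} by first translating $\delta_{D,N}(\pi)$ into the density quantity $\delta(E,N)$ via \eqref{eq:new-delta}, then applying Proposition~\ref{t:deltainclosure}. That proposition in turn rests on the Chebotarev-type description of $\delta(E,K)$ in Theorem~\ref{t:densities} (which re-expresses it as a ratio $\#(T\cap H_1)/\#H_1$ inside a common Galois group), followed by an inclusion--exclusion argument over the conjugates of $H_2$, using the orbit--stabiliser-style Lemma~\ref{l:productset}. Your argument bypasses all of this: you observe directly from the definition \eqref{eq:dD} that $\delta_{D,N}(\pi)$ depends only on the image of $\Gal(\bar\QQ/N)$ in $\mathrm{Sym}(S_D)$ (since replacing $\Gamma_{D,N}$ by any quotient through which the action factors scales numerator and denominator alike), identify that image with $\Gal(M/M\cap N)$ for $M$ the Galois extension cut out by the kernel of the action, and then note $M=E^{\text{normal}}$ and $M\cap(N\cap M)=N\cap M$. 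This gives a clean two-step proof with no analytic input; the one point worth being explicit about is the identification $M=E^{\text{normal}}$, which depends on the convention that $E$ is the minimal field over which all of $S_D$ is defined (so that $\Gal(\bar\QQ/E)$ equals the kernel of $\rho$ and $E^{\text{normal}}=E=M$), or, if $E$ denotes the field of definition of a single component, on $S_D$ being a single Galois orbit so that the normal core of $\Stab(Z_1)$ equals $\ker\rho$. The trade-off is that the paper's longer route establishes Theorem~\ref{t:densities} and Proposition~\ref{t:deltainclosure} as independent tools for computing the densities $\delta(E,K)$, which are used elsewhere in Section~2; your argument proves Theorem~\ref{t:remark} without needing those intermediate results at all.
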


As we have seen, our understanding of 
$N_{\text{loc}}(\pi,B)$ is inexorably linked to the arithmetic of the
orbifold base $(\PP^1,\partial_\pi)$. 
The study of rational points on orbifolds  is the focus of work by
Pieropan, Smeets, Tanimoto and V\'arilly-Alvarado \cite{c-manin}, which  offers a far-reaching conjectural asymptotic formula for any orbifold $(Y,\partial)$ with $\QQ$-ample divisor 
$-(K_{Y}+\partial)$.
Pieropan and Schindler \cite{c-manin'}  have  verified many cases of the conjecture when $Y$ is a split toric variety 
over $\QQ$. Their work covers the orbifolds that arise in the proof of Theorem \ref{t:1} and \ref{t:2} and would yield the upper bound 
$
N_{\text{loc}}(\pi,B)=O(B^{2-\deg \partial_\pi}).
$ 
In order to achieve the desired non-positive powers of $\log B$, we need to 
incorporate extra Chebotarev type conditions that arise when counting locally soluble fibres.

The proofs of Theorems \ref{t:1} and \ref{t:2} are based on the large sieve and will be carried out in Section \ref{s:large}. A crucial ingredient will be a
{\em sparsity criterion}, which gives explicit control over which fibres are everywhere locallly soluble.  This criterion will be proved in  Section \ref{s:sparsity} using log geometry, and may be of independent interest.

Extending Theorem \ref{t:2} to three multiple fibres  represents a formidable challenge. 
The easiest such case corresponds to the $\QQ$-divisor
$$
\partial_\pi=\frac{1}{2}[0]+\frac{1}{2}[1]+\frac{1}{2}[\infty].
$$ 
Conjecture \ref{con1} would predict that 
$N_{\text{loc}}(\pi,B)=O_{\ve}(B^{1/2+\ve})$, for any $\ve>0$. However,  the best upper bound we have 
is due to Browning and 
Van Valckenborgh
\cite{bvv}, which  only  yields the exponent $3/5+\ve$.

\subsection{A new conjecture}

We are now ready to reveal a new conjecture for the density of locally soluble fibres for standard fibrations, in which multiple fibres are allowed.
Let $\pi \colon X \to \mathbb P^1$ be a standard fibration, and let 
$\theta \colon \mathbb P^1 \to (\mathbb P^1,\partial_\pi)$ be a finite \'etale orbifiold morphism, 
as defined in Definition \ref{def:orb_mor}.

We assume that 
$(\mathbb P^1,\partial_\pi)$ does not admit a finite \'etale orbifold morphism which factors through $\theta$, and $\theta$ is a $G$-torsor under 
a
finite \'etale group scheme $G$.  Let  $\theta_{v} \colon \mathbb P^1 \to \mathbb P^1$ denote the  twist of $\theta$ by any $v \in \textup{H}^1(\Gal(\bar\QQ/\QQ),G)$.
Finally, let  $\pi_{v} \colon X_{v} \to \mathbb P^1$ denote the  normalisation of the pullback of $\pi$ along $\theta_{v}$.

\begin{conjecture}\label{con2}
Let  $\pi:X\to \PP^1$ be a standard fibration such that the $\QQ$-divisor
$-(K_{\PP^1}+\partial_\pi)$ is ample and 
$X(\A_\QQ)\neq \emptyset$.
Then there exists a constant $c_{\pi}>0$ such that 
$$
N_{\text{loc}}(\pi,B)\sim c_{\pi} \frac{B^{2-\deg \partial_\pi}}{(\log B)^{
\min_{v\in \textup{H}^1(\Gal(\bar\QQ/\QQ),G)}\Delta(\pi_v)}}
$$
where
$
\Delta(\pi_v)$ is given by \eqref{eq:Delta}.
\end{conjecture}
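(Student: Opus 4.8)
The plan is to reduce $N_{\text{loc}}(\pi,B)$ to a sum of Loughran--Smeets counts over the finitely many relevant twists of the universal étale orbifold cover $\theta$, and then read off the dominant term. \textbf{Step 1: a sparsity dichotomy.} I would first sharpen the sparsity criterion of Section~\ref{s:sparsity} into a description of $\pi(X(\A_\QQ))\cap\PP^1(\QQ)$: at primes $p$ dividing the part of the bad locus supported on the multiple fibres, $\pi^{-1}(x)$ should have a $\QQ_p$-point only when $x$ avoids reducing into a multiple fibre (equivalently, when $x$ is $p$-adically an orbifold point of $(\PP^1,\partial_\pi)$), whereas at the remaining bad primes one recovers the Frobenius conditions of \cite[Thm.~1.2]{LS}. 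The conclusion I am after is that $x\in\pi(X(\A_\QQ))$ if and only if $x\in\theta_v(\PP^1(\QQ))$ for the unique twist $v=[\theta^{-1}(x)]\in\textup{H}^1(\Gal(\bar\QQ/\QQ),G)$ \emph{and} the pulled-back point gives an everywhere locally soluble fibre of $\pi_v$; moreover only finitely many classes $v$ can occur, and $X(\A_\QQ)\neq\emptyset$ guarantees that at least one does. This last point is exactly what makes the minimum in the statement range over a nonempty, finite set.

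\textbf{Step 2: descent and height bookkeeping.} Since $\theta$ is a $G$-torsor, $\PP^1(\QQ)$ partitions according to the twist through which a point lifts, so Step~1 reduces the count to $\sum_v$ of the number of everywhere locally soluble fibres of $\pi_v$ over points $y$ of the source $\PP^1$ whose image $\theta_v(y)$ has height $\le B$. Because $\theta$ is \emph{étale} as an orbifold morphism, orbifold Euler characteristics are multiplicative, giving $(\deg\theta)(2-\deg\partial_\pi)=2$; comparing the naive height on the source $\PP^1$ with $H\circ\theta_v$ — which agrees with an $\mathcal O(\deg\theta)$-height away from the ramification along $\partial_\pi$, up to bounded factors depending on $v$ — turns the inner sum into $N_{\text{loc}}(\pi_v,\,c_vB^{1/\deg\theta})$ for suitable $c_v>0$, up to a bounded multiplicity coming from the rational points of the fibres of $\theta_v$.

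\textbf{Step 3: Loughran--Smeets on each cover, and assembly.} By construction $\theta$ is maximal, so each $\pi_v$ is a standard fibration \emph{without} multiple fibres; conditionally on \cite[Conj.~1.6]{LS} (or unconditionally in the verified cases) one then has $N_{\text{loc}}(\pi_v,T)\sim\gamma_{\pi_v}T^2/(\log T)^{\Delta(\pi_v)}$, and since $T\asymp B^{1/\deg\theta}$ and $2/\deg\theta=2-\deg\partial_\pi$ the twist $v$ contributes $\asymp B^{2-\deg\partial_\pi}/(\log B)^{\Delta(\pi_v)}$. Summing the finitely many twists and keeping the terms of slowest logarithmic decay yields the asserted asymptotic, with $c_\pi$ a positive combination of the $\gamma_{\pi_v}$ over the $v$ realising $\min_v\Delta(\pi_v)$; the residual points of small height or at bad reduction contribute only $O(1)$.

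\textbf{Main obstacle.} The conceptual heart is Step~1: proving that everywhere local solubility of $\pi^{-1}(x)$ factors \emph{cleanly} as ``$x$ is an orbifold point of $(\PP^1,\partial_\pi)$'' together with an independent Loughran--Smeets condition on the twisted cover, with no interaction between the two contributions. This is the asymptotic counterpart of the descent computations behind Theorems~\ref{t:1} and~\ref{t:2}, and the log-geometric sparsity criterion of Section~\ref{s:sparsity} is designed precisely to expose it. The technical obstruction is that Step~3 needs the Loughran--Smeets \emph{asymptotic} — the matching lower bound of \cite[Conj.~1.6]{LS} is open in general — and, crucially, in a form uniform over the twists $v$, with uniform error terms and control of the leading constants; even a single such asymptotic is currently out of reach, so unconditionally the conjecture remains inaccessible. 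A subsidiary point is bounding the tail of the sum over twists, i.e.\ showing that only finitely many contribute and that their combined contribution is negligible: this should follow from a Chevalley--Weil argument controlling the discriminants of the $\theta_v$.
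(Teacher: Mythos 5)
First, note that the statement you are trying to prove is a \emph{conjecture}: the paper does not prove it, and only offers the heuristic of Section~\ref{sec:orb} (Propositions~\ref{prop:thin} and~\ref{prop:properties}) together with partial evidence (Theorems~\ref{t:1}, \ref{t:2} and the examples of Section~\ref{s:examples}). Your Steps 1--2 do track that heuristic: the partition of $\PP^1(\QQ)$ by twists is exactly Proposition~\ref{prop:properties}(a)--(b), and the height/exponent bookkeeping is Proposition~\ref{prop:thin}(c) together with Remark~\ref{rem:zwiebel}. But beyond the conditionality you acknowledge, Step~3 contains a genuine error. The covers $\pi_v$ are \emph{not} in general free of multiple fibres, so \cite[Conj.~1.6]{LS} does not apply to them. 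By Proposition~\ref{prop:thin}(b) the pulled-back multiplicity is $m_P/e(P'/P)$, which equals $1$ only when $\theta$ is fully ramified to order $m_P$ over $P$; maximality of $\theta$ does not force this. For a Halphen surface (one multiple fibre of multiplicity $m$) the orbifold $(\PP^1,(1-\tfrac1m)[0])$ is simply connected, $G$ is trivial, $\pi_v=\pi$ retains its multiple fibre, and your identity $(\deg\theta)(2-\deg\partial_\pi)=2$ fails ($1\cdot(1+\tfrac1m)\neq 2$). Likewise for $m_0\neq m_\infty$ the $\mu_{\gcd(m_0,m_\infty)}$-cover leaves multiplicities $m_i/d>1$. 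What is needed on each cover is an asymptotic of the shape $T^{2-\deg\partial_{\pi_v}}/(\log T)^{\Delta(\pi_v)}$ --- i.e.\ an instance of the very conjecture being discussed --- which the paper attacks not via Loughran--Smeets but via the log-geometric sparsity criterion (Theorem~\ref{thm:lifting}) combined with the large sieve for upper bounds and with Proposition~\ref{prop:maincount} for lower bounds in examples.

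A second concrete gap: $\textup{H}^1(\Gal(\bar\QQ/\QQ),G)$ is infinite in general (e.g.\ $\QQ^\times/\QQ^{\times,d}$ for $G=\mu_d$), and infinitely many twists contribute points of height at most $B$, so your claim that ``only finitely many classes $v$ can occur'' is false; what is finite is the set of \emph{values} of $\Delta(\pi_v)$ (Theorem~\ref{t:remark}, Proposition~\ref{prop:properties}(d)). Consequently the tail of the sum over twists is a real analytic issue requiring estimates uniform in $v$ --- this is precisely the role of the explicit constant $c_{v,\ve}$ in Proposition~\ref{prop:2fibres} and the splitting at $|v_0v_1|\leq B^{\ve}$ in the proof of Theorem~\ref{t:2} --- and the putative leading constant $c_\pi$ is an infinite series over the minimising twists whose convergence and positivity is itself part of the content of the conjecture. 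Finally, the paper's example following Corollary~\ref{c:goat} warns that infinitely many twists can have $\Delta(\pi_v)\neq\min_w\Delta(\pi_w)$, and showing that their combined contribution is of lower order (they form a thin set, but thin sets can be large for local-solubility counts) is an additional step your sketch does not address.
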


Note that it follows from Theorem \ref{t:remark} that 
$\Delta(\pi_v)$ takes only finitely many values.
In the special case that the orbifold base is simply connected as an orbifold, 
which in   the setting of Theorem \ref{t:2} covers the case $\gcd(m_0,m_\infty)=1$, we will have
$\Theta(\pi)=\Delta(\pi)$. Thus Conjecture \ref{con2} implies that 
$$
N_{\text{loc}}(\pi,B)\sim c_{\pi} \frac{B^{2-\deg \partial_\pi}}{(\log B)^{\Delta(\pi)}},
$$
in this case, 
which is consistent with the upper bound in Theorem \ref{t:1}.
In Corollary~\ref{c:goat} we  shall take $G=\mu_d$ and prove that
$
\Theta_v(\pi)=\Delta(\pi_v)
$
in \eqref{eq:theta-v}. Hence the upper bound in 
Theorem~\ref{t:3} is also  consistent with Conjecture \ref{con2}.
In Section~\ref{s:examples} we shall provide further evidence for the conjecture, by establishing a range of estimates for 
the variant 
$N_{\text{loc},S}(\pi,B)$
of  $N_{\text{loc}}(\pi,B)$, in which local solubility is only required 
away from a finite set $S$ of primes.
In  Theorem \ref{t:ONE}, for example, we establish a precise lower bound
for $N_{\text{loc},S}(\pi,B)$ in the case that  $\pi:X\to \PP^1$ is a standard fibration for which the only non-split fibres lie over $0$ and $\infty$.

One further source of examples
that can be used to 
 illustrate our conjectures 
 is the class of {\em Halphen surfaces}. These were introduced by 
  Halphen \cite{Halphen} in 1882 and correspond to 
standard fibrations  admitting a unique multiple fibre.
In Theorems~\ref{t:TWO}--\ref{t:FIVE} we provide several estimates for
$N_{\text{loc},S}(\pi,B)$ that are consistent with Conjecture \ref{con2}, 
for appropriate surfaces of Halphen type. 
In the proof of  Theorem~\ref{t:FIVE} we are led to  a
concrete 
 problem in 
analytic number theory that was solved by 
Friedlander and Iwaniec \cite[Thm.~11.31]{FI}. Indeed, we need matching upper and 
lower bounds for the number of positive integers $a,b$ 
satisfying
$a^6+b^2\leq x$, as $x\to \infty$, 
such that the only prime divisors of $a^6+b^2$ are those that split in a given cubic Galois extension $K/\QQ$. It would be useful to have a similar result for non-Galois extensions, but this appears to be difficult.

\begin{rem}
Returning to the example \eqref{eq:colliot}, we see that the associated $\QQ$-divisor $\partial_\pi$ has degree $\frac{1}{2}\deg(f)$.
Since $f$ is assumed to have even degree, it follows  that  $-(K_{\PP^1}+\partial_\pi)$ is ample only when  $\deg(f)=2$.
When $f$ is a quadratic polynomial, 
Conjecture~\ref{con1} implies that 
$N_\text{loc}(\pi,B)=O_\ve(B^{1+\ve})$ for any $\ve>0$.  The orbifold base 
 $(\mathbb P^1,\partial_\pi)$ 
 admits  $\mu_2$-covers and
it is possible to apply Conjecture \ref{con2} to predict an explicit power of $\log B$.
The outcome will depend on the Galois action on the geometric components of the fibres.
\end{rem}

\subsection{Further questions}
We expect similar conjectures to hold when looking at fibrations $\pi:X\to Y$ over other bases for which  $-(K_Y+\partial_\pi)$ is $\QQ$-ample. However, when $\dim(Y)>1$ the sparsity criterion we work out in Section \ref{s:sparsity} will be significantly more complicated.
Moreover, care also needs to be taken around the effect of thin subsets of $Y(\QQ)$ on the counting problem. A counter-example to the most naive expectation has recently been provided 
\cite{bls} in the case that $Y$ is a split quadric in $\PP^3$.

In a different direction, when $Y=\PP^1$, we can 
 extend the definition \eqref{eq:count} by defining 
$
N_{\text{loc}}(\pi,B;Z)$ to be the number of 
$x\in (\PP^1(\QQ)\setminus Z)\cap \pi(X(\mathbf{A}_\QQ))$ for which $H(x)\leq B$, 
for any {\em thin subset} $Z\subset \PP^1(\QQ)$. 
It is then very natural to ask whether or not we should 
expect a bound of the shape 
\begin{align*}
 N_{\text{loc}}(\pi,B;Z) 
 \ll  \frac{B^{\frac{1}{m_0}+\frac{1}{m_\infty}}}{(\log B)^{\Delta(\pi)}},
\end{align*}
where $\Delta(\pi)$ is given by \eqref{eq:Delta}, if we have the freedom to remove any  thin set $Z$.

\subsection{Summary of the paper}

The main sparsity criterion for locally soluble fibres is Theorem \ref{thm:lifting}. It is 
 proved using log geometry in Section \ref{s:sparsity} and leads to Chebotarev type conditions about the splitting behaviour of primes. 
In Section~\ref{s:grouptheoretic} we shall collect together some basic group-theoretic results that allow us to interpret the output from Chebotarev's theorem. 
Section \ref{sec:proofofmaincount} uses recent work of 
Arango-Pi\~neros, Keliher and Keyes 
\cite{arxiv} to count pairs of power-full integers which lie in the multiplicative span of Frobenian sets of primes. 
In Section \ref{sec:orb}
we shall introduce the necessary background on orbifolds that is required to interpret the exponent of $\log B$ in Conjecture \ref{con2}. 
 Section \ref{s:large} contains the proof of Theorems~\ref{t:1} and \ref{t:2} and is based on an application of the large sieve. Finally, Section \ref{s:examples} builds on the work in Section \ref{sec:proofofmaincount}
and 
contains a range of estimates for the modified counting function $N_{\text{loc},S}(\pi,B)$ in specific examples.

\subsection*{Acknowledgements}
We are very grateful to Tim Santens for useful conversations.
While working on this paper
the first    author was supported  by 
 FWF grant P 36278.

\section{Group-theoretic results}\label{s:grouptheoretic}

We will need some preliminary results on the density of primes with a prescribed splitting behaviour. Using Chebotarev's theorem we will be able to  translate it into statements about  groups and group actions. We begin by proving some results in elementary group theory.

\subsection{Group theory lemmas}

Let $G$ be a finite group and let  $H \subseteq G$ be a  subgroup. For  an element $g \in G$ we will write $\Fix_g(G/H)$ for the set of fixed points of $g$ under the natural action of $G$ on $G/H$.

\begin{lemma}\label{l:fixpoints}
Let $C \subseteq G$ be a conjugacy class. Then we have 
\[
\sum_{g \in C} \# \Fix_g(G/H) = \frac{\#G}{\#H} \#(C \cap H).
\]
\end{lemma}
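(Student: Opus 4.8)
The plan is to compute the sum $\sum_{g\in C}\#\Fix_g(G/H)$ by a double-counting argument over the set of pairs $\{(g,xH): g\in C,\ gxH=xH\}$, which is the classical Burnside-style interchange of summation. First I would rewrite
\[
\sum_{g\in C}\#\Fix_g(G/H)=\#\{(g,xH)\in C\times G/H : g\in xHx^{-1}\},
\]
using the elementary observation that $g$ fixes the coset $xH$ under left translation precisely when $x^{-1}gx\in H$, i.e.\ when $g\in xHx^{-1}$. Summing this instead over the coset variable gives
\[
\sum_{g\in C}\#\Fix_g(G/H)=\sum_{xH\in G/H}\#\bigl(C\cap xHx^{-1}\bigr).
\]

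The key step is then to observe that $\#(C\cap xHx^{-1})$ depends only on the coset $xH$ and in fact equals $\#(C\cap H)$ for every $x$: conjugation by $x$ is an automorphism of $G$ that maps $H$ onto $xHx^{-1}$ and maps the conjugacy class $C$ onto itself (since $C$ is a full conjugacy class, it is stable under every inner automorphism), so it restricts to a bijection $C\cap H\to C\cap xHx^{-1}$. Therefore each of the $\#G/\#H$ terms in the sum over $G/H$ equals $\#(C\cap H)$, and we conclude
\[
\sum_{g\in C}\#\Fix_g(G/H)=\frac{\#G}{\#H}\,\#(C\cap H),
\]
as claimed.

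I do not expect any genuine obstacle here; this is a short, standard orbit-counting identity. The only point requiring a moment's care is the invariance $\#(C\cap xHx^{-1})=\#(C\cap H)$, where one must use that $C$ is closed under conjugation — the statement would fail for an arbitrary subset in place of a conjugacy class. Everything else is bookkeeping with the bijection between fixed-point data and the incidence set of pairs.
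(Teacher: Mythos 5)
Your proof is correct. Both arguments begin with the same Burnside-style double count of the incidence set $\{(g,xH)\in C\times G/H : x^{-1}gx\in H\}$, but after that they diverge. The paper keeps the outer sum over $g$ (lifted to $x\in G$), introduces an auxiliary variable $y=x^{-1}gx\in C\cap H$, and invokes the orbit--stabiliser theorem to count, for each pair $(g,y)$ of conjugate elements, the $\#G/\#C$ conjugators $x$ carrying $g$ to $y$; the final identity then drops out as $\tfrac{1}{\#H}\cdot\#C\cdot\tfrac{\#G}{\#C}\cdot\#(C\cap H)$. You instead switch the order of summation to run over cosets $xH$, reducing to $\sum_{xH}\#(C\cap xHx^{-1})$, and then observe directly that conjugation by $x$ is a bijection $C\cap H\to C\cap xHx^{-1}$ because $C$ is a full conjugacy class. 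This avoids the orbit--stabiliser count entirely and is arguably cleaner; you have correctly identified that the conjugation-invariance of $C$ is the one non-trivial ingredient, and you should just note (as you implicitly do by writing $xHx^{-1}$) that this expression depends only on the coset $xH$, so the sum over $G/H$ is well-posed. Both proofs are sound; yours is a genuine, slightly more elementary variant of the same combinatorial identity.
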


\begin{proof}
First note that for conjugate elements $g,y \in C$ there is an element $u \in G$ such that $u^{-1}yu=g$. Hence
\[
\{x\in G \colon x^{-1}gx =y\} = \{x\in G \colon (ux)^{-1}y (ux) =y\} = u^{-1}\Stab_y,
\]
whose cardinality is $\#G/\#C$ by the orbit--stabiliser theorem, since $C$ is the orbit of $y$ under conjugation. We now see that
\begin{align*}
\sum_{g \in C} \# \Fix_g(G/H) & = \#\{(g,xH) \in C \times G/H \colon gxH=xH\}\\
 & = \#\{(g,xH) \in C \times G/H \colon x^{-1}gx \in H\}.
 \end{align*}
 Hence
\begin{align*}
\sum_{g \in C} \# \Fix_g(G/H) 
 & = \frac1{\#H} \#\{(g,x) \in C \times G \colon x^{-1}gx \in H \cap C\}\\
 & = \frac1{\#H} \#\{(g,x,y) \in C \times G \times (H \cap C) \colon x^{-1}gx=y\}\\
 & = \frac1{\#H}\#C\cdot \frac{\#G}{\#C} \cdot \#(C\cap H),
\end{align*}
which proves the lemma.
\end{proof}

\begin{lemma}\label{l:productset} Let $S$ and $T$ be subgroups of $G$. Then
\[
\#S\#T=\#(S\cap T)\#(ST).
\]
\end{lemma}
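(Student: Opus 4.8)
The plan is to exhibit a bijection (or, more precisely, a uniform fibration) between the product set $S \times T$ and the set $ST$, keeping track of fibre sizes. Concretely, I would consider the multiplication map $\mu \colon S \times T \to ST$, $(s,t) \mapsto st$, which is surjective by definition of $ST$. The first step is to show that every fibre of $\mu$ has exactly $\#(S \cap T)$ elements: if $st = s't'$ with $s,s' \in S$ and $t,t' \in T$, then $(s')^{-1}s = t'(t)^{-1} \in S \cap T$, so writing $w = (s')^{-1}s$ we get $s = s'w$ and $t = w^{-1}t'$; conversely any $w \in S \cap T$ produces such a pair. Hence the fibre over a fixed element $st$ is in bijection with $S \cap T$ via $w \mapsto (sw, w^{-1}t)$.

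Once the fibres are known to have constant size $\#(S \cap T)$, the counting identity $\#(S \times T) = \#(S \cap T)\cdot \#(ST)$ follows immediately, and since $\#(S\times T) = \#S \#T$ this is exactly the claimed formula. Note that this argument does not require $ST$ to be a subgroup of $G$ — it is merely a subset — so no normality hypothesis is needed; only the group structure of $S$, $T$ and their intersection is used.

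I do not anticipate a genuine obstacle here: the only point requiring a moment's care is the verification that the map $w \mapsto (sw, w^{-1}t)$ lands in $S \times T$ (it does, since $w \in S$ and $w \in T$) and that it is a bijection onto the fibre, which is the displayed equivalence above. An alternative, equally short route is to let $S \cap T$ act on $S \times T$ by $w \cdot (s,t) = (sw^{-1}, wt)$; this action is free, its orbits are exactly the fibres of $\mu$, and the orbit–stabiliser theorem then yields the count. Either way the proof is a couple of lines.
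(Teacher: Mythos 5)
Your proof is correct and is essentially the paper's argument in different clothing: the paper applies the orbit--stabiliser theorem to the action of $S\times T$ on $G$ given by $(s,t)\cdot g = sgt^{-1}$, with $ST$ the orbit of $e_G$ and the diagonally embedded $S\cap T$ its stabiliser, which is exactly the fibre-count you carry out for the multiplication map $S\times T\to ST$. Your alternative, the free $S\cap T$-action on $S\times T$, is the same computation read off the other side of the orbit--stabiliser identity.
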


\begin{proof}
Consider the action $S\times T$ on $G$ by $(s,t)g=sgt^{-1}$. The stabiliser of $e_G$ equals the image of diagonal map $S \cap T \hookrightarrow S \times T$ and the set $ST$ is the orbit of $e_G$. The result now follows from the orbit--stabiliser formula.
\end{proof}

\subsection{Density of primes}\label{s:density}

Let $F/\QQ$ be a number field with ring of integers $\mathcal O_F$. Define $\mathcal P_{F,m}$ to be the set of rational primes $p$ unramified in $F$ which are divisible by exactly $m$ primes $\mathfrak p_i \subseteq \mathcal O_F$ of degree $1$. Let
\[
\mathcal P_F = \bigcup_{m\geq 1} \mathcal P_{F,m}.
\]
We  define
$$
\delta(E,K) = 1 - \sum_{m=1}^d m \dens\left(\mathcal P_{K,m} \cap \mathcal P_E\right),
$$
for any number fields $K,E \subseteq \bar{\mathbb Q}$ with $d= [K \colon \mathbb Q]$.  
The main result of this section is the following result. 

\begin{thm}\label{t:densities}
Let $K,E \subseteq \bar{\mathbb Q}$ be two number fields with $d= [K \colon \mathbb Q]$. Define
\[
\delta(E,K) = 1 - \sum_{m=1}^d m \dens\left(\mathcal P_{K,m} \cap \mathcal P_E\right).
\]
Let $L \subseteq \bar{\mathbb Q}$ be a Galois extension of $\mathbb Q$ which contains both $K$ and $E$. Then
\[
\delta(E,K) = 1 - \frac{\#\{\sigma \in \Gal(L/K) \colon \sigma \text{ fixes a conjugate of } E\}}{\#\Gal(L/K)}.
\]
\end{thm}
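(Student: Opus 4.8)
The plan is to compute the density $\dens(\mathcal P_{K,m} \cap \mathcal P_E)$ via Chebotarev's theorem applied to the Galois extension $L/\QQ$, and then to reorganise the resulting sum over $m$ using the fixed-point combinatorics established in Lemma~\ref{l:fixpoints}. Write $G = \Gal(L/\QQ)$, $H = \Gal(L/K)$ and $H' = \Gal(L/E)$. For a prime $p$ unramified in $L$, its Frobenius conjugacy class $\frob_p \subseteq G$ controls the splitting: the number of degree-one primes of $\mathcal O_K$ above $p$ equals $\#\Fix_{\sigma}(G/H)$ for any $\sigma \in \frob_p$, and $p \in \mathcal P_E$ precisely when $\#\Fix_{\sigma}(G/H') \geq 1$, i.e. when $\sigma$ fixes some coset of $H'$, equivalently when $\sigma$ fixes a conjugate of $E$ (since the conjugates of $E$ correspond to the subgroups $xH'x^{-1}$). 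So the first step is to rewrite, using Chebotarev,
\[
\sum_{m=1}^d m\,\dens(\mathcal P_{K,m}\cap \mathcal P_E)
= \frac{1}{\#G}\sum_{\substack{\sigma\in G\\ \sigma\text{ fixes a conjugate of }E}} \#\Fix_\sigma(G/H),
\]
where one must be slightly careful that $\mathcal P_{K,m}$ only records unramified primes — but the ramified primes form a density-zero set, so they do not affect anything. Note the sum over $m$ is finite because $\#\Fix_\sigma(G/H)\le [G:H]=d$ always.

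Next I would break the sum on the right into conjugacy classes $C$ of $G$. The condition ``$\sigma$ fixes a conjugate of $E$'' is conjugation-invariant, so each class $C$ is either entirely included or entirely excluded; say $\mathcal C$ is the set of included classes. For each such $C$, Lemma~\ref{l:fixpoints} gives $\sum_{g\in C}\#\Fix_g(G/H) = \frac{\#G}{\#H}\,\#(C\cap H)$. Summing over $C \in \mathcal C$ yields
\[
\frac{1}{\#G}\sum_{C\in\mathcal C}\frac{\#G}{\#H}\#(C\cap H)
= \frac{1}{\#H}\sum_{C\in\mathcal C}\#(C\cap H)
= \frac{\#\{h\in H : h\text{ lies in some }C\in\mathcal C\}}{\#H}.
\]
The numerator is exactly $\#\{h \in \Gal(L/K) : h \text{ fixes a conjugate of } E\}$: an element $h \in H$ lies in an included class iff $h$ (as an element of $G$) fixes a conjugate of $E$. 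Hence $\sum_m m\,\dens(\cdots) = \#\{\sigma\in\Gal(L/K): \sigma\text{ fixes a conjugate of }E\}/\#\Gal(L/K)$, and subtracting from $1$ gives the claimed formula for $\delta(E,K)$.

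The only genuinely delicate point is the bookkeeping in the first step — making precise that ``$p$ is divisible by exactly $m$ degree-one primes of $\mathcal O_K$'' translates under Chebotarev into ``$\#\Fix_{\frob_p}(G/H) = m$'', and that ``$p$ has at least one degree-one prime of $\mathcal O_E$ above it'' is equivalent to ``$\frob_p$ fixes a conjugate of $E$''. Both are standard: the primes of $K = L^H$ above $p$ biject with the orbits of $\frob_p$ on $G/H$, with residue degree equal to the orbit length, so degree-one primes correspond to fixed points; and the conjugates of $E$ inside $L$ are the fields $L^{xH'x^{-1}}$, fixed by $\sigma$ iff $\sigma$ fixes the coset $xH'$. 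I would state these two facts with a one-line justification and cite Chebotarev for the density computation. Everything else is the purely group-theoretic identity above, for which Lemma~\ref{l:fixpoints} does the real work; Lemma~\ref{l:productset} does not appear to be needed for this particular statement.
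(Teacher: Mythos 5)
Your proof is correct and follows essentially the same route as the paper: identify $\mathcal P_{K,m}$ and $\mathcal P_E$ with Frobenius fixed-point conditions, apply Chebotarev, sum $m\,\dens(\cdots)$ as a sum of $\#\Fix_\sigma(G/H)$ over the conjugation-closed set $T$ of elements fixing a conjugate of $E$, and evaluate class by class via Lemma~\ref{l:fixpoints} to land on $\#(T\cap\Gal(L/K))/\#\Gal(L/K)$. The paper introduces auxiliary sets $C_m$ before recombining, whereas you go straight to the sum over $T$, but this is pure bookkeeping; and you are right that Lemma~\ref{l:productset} is not needed here (the paper uses it only later, in Proposition~\ref{t:deltainclosure}).
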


The quantity $\delta(E,K)$  generalises a quantity that is implicit in the work of Loughran--Smeets 
\cite[Eq.~(1.4)]{LS}. Let $\pi:X\to \PP^1$ be a standard fibration and let 
$D$ be a closed point of $\PP^1$ with residue field $\kappa(D)$. 
Let $I_D(\pi)$  be the set of geometrically irreducible 
components of $\pi^{-1}(D)$
of multiplicity one and let 
 $E$ be the minimal extension of $\kappa(D)$ over which the components 
 of $I_D(\pi)$ are defined. Then 
 $$
\delta_D(\pi)=1-
\delta(E,\kappa(D))
$$
in \cite[Eq.~(1.4)]{LS}. Moreover, 
if we take $S_D$ to be the set of geometrically irreducible 
components of $\pi^{-1}(D)$
of multiplicity $m_D$ and we let 
 $E$ be the
 field of definition of the elements of $S_D$, then we also have
\begin{equation}\label{eq:new-delta}
\delta_{D,N}(\pi)=1-
\delta(E,N)
\end{equation}
in \eqref{eq:dD}, for any number field $N/\QQ$.

\begin{proof}[Proof of Theorem \ref{t:densities}]
Write $G= \Gal(L/\mathbb Q)$ and let $K$ and $E$ be the fixed fields of the subgroups $H_1, H_2 \subseteq G$. Then we have
\[
\mathcal P_{K,m} =\{\text{primes } p \in \mathbb Z \text{ unramified in $L$ for which } \#\Fix_{\frob_p}(G/H_1) = m\}
\]
and
\[
\mathcal P_{E} =\{\text{primes } p \in \mathbb Z \text{ unramified in $L$ for which }  \#\Fix_{\frob_p}(G/H_2) \geq 1\}.
\]
Note that
\[
C_m = \{g \in G \colon \#\Fix_g(G/H_1) = m \text{ and } \#\Fix_g(G/H_2) \geq 1\}
\]
is closed under conjugation, since conjugate elements have the same number of fixed points. By Chebotarev's theorem, 
in the form presented by Serre \cite[Thm.~3.4]{serre}, for example,
we therefore obtain
\[
\dens(\mathcal P_{K,m} \cap \mathcal P_{E}) = \frac{\#C_m}{\#G}.
\]

Let $T = \bigcup_{t \in G} tH_2 t^{-1}$, which we
note is closed under conjugation.
Since $g \in G$ has at least a fixed point on $G/H_2$ if and only if $g \in T$,  we arrive at
\begin{align*}
\sum_{m=1}^d m \dens\left(\mathcal P_{K,m} \cap \mathcal P_E\right) & = \frac1{\#G} \sum_{m=1}^d m \#C_m\\
& = \frac1{\#G}\sum_{g \in T} \# \Fix_g(G/H_1).
\end{align*}
We may now conclude from Lemma~\ref{l:fixpoints} that
\begin{equation}\label{eq:milk}
\sum_{m=1}^d m \dens\left(\mathcal P_{K,m} \cap \mathcal P_E\right) = \frac{\#(T \cap H_1)}{\#H_1}.
\end{equation}
The statement of the theorem follows on noting that 
$H_1 = \Gal(L/K)$ and $T= \{\sigma \in G\colon \sigma \text{ fixes a conjugate of } E\}$.
\end{proof}

Note that we could not have applied the Chebotarev Theorem to $\#(T \cap H_1)$, since $T \cap H_1$ is not necessarily fixed under conjugation in $G$. It is however closed under conjugation in $H_1$.

\subsection{Computation of $\delta$ in specific cases}

Theorem~\ref{t:densities} allows us to compute the density $\delta(E,K)$ in the common Galois closure $L$ of both $K$ and $E$. The following theorem says that this can be reduced to a computation in a Galois closure of $E$.

\begin{prop}\label{t:deltainclosure}
Let $E^\text{normal}$ be the normal closure of $E$ in $\bar{\mathbb Q}$. Then
\[
\delta(E,K)=\delta(E,E^\text{normal}\cap K).
\]
\end{prop}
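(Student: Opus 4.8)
The plan is to reduce everything to the group-theoretic description provided by Theorem \ref{t:densities}. First I would fix a finite Galois extension $L/\QQ$ containing $K$ and $E$; without loss of generality I can also assume $L$ contains $E^\text{normal}$, since enlarging $L$ does not change any of the densities involved (this is exactly the content of Theorem \ref{t:densities}, which shows $\delta(E,K)$ is computed the same way in any sufficiently large Galois $L$). Write $G=\Gal(L/\QQ)$, and let $H_1=\Gal(L/K)$, $H_2=\Gal(L/E)$, so that $\Gal(L/E^\text{normal})=\bigcap_{t\in G}tH_2t^{-1}$ is the normal core of $H_2$ in $G$. Set $T=\bigcup_{t\in G}tH_2t^{-1}$, which is the set of $\sigma\in G$ fixing a conjugate of $E$. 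By Theorem \ref{t:densities},
\[
\delta(E,K)=1-\frac{\#(T\cap H_1)}{\#H_1},
\]
and similarly, writing $H_1'=\Gal(L/E^\text{normal}\cap K)=H_1\cdot\Gal(L/E^\text{normal})$ (the subgroup generated by $H_1$ and the normal core of $H_2$),
\[
\delta(E,E^\text{normal}\cap K)=1-\frac{\#(T\cap H_1')}{\#H_1'}.
\]
So the proposition reduces to the purely group-theoretic identity
\[
\frac{\#(T\cap H_1)}{\#H_1}=\frac{\#(T\cap H_1')}{\#H_1'},\qquad H_1'=H_1 N,\ N=\bigcap_{t\in G}tH_2t^{-1}.
\]

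The key observation is that $N$ is a normal subgroup of $G$ contained in every conjugate $tH_2t^{-1}$, hence contained in $T$. Therefore $T$ is a union of cosets of $N$: if $\sigma\in T$ then $\sigma\in tH_2t^{-1}$ for some $t$, and since $N\subseteq tH_2t^{-1}$ we get $\sigma N\subseteq tH_2t^{-1}\subseteq T$. Consequently, for any subgroup $A$ with $N\subseteq A$, the set $T\cap A$ is a union of $N$-cosets inside $A$, so $\#(T\cap A)/\#A$ depends only on the image of $T\cap A$ in $A/N$, i.e. equals $\#(\bar T\cap (A/N))/\#(A/N)$ where $\bar T$ is the image of $T$ in $G/N$. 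Applying this with $A=H_1'=H_1N$ gives the right-hand ratio directly. For the left-hand side, $H_1$ need not contain $N$, but here I would use Lemma \ref{l:productset}: since $\#H_1\#N=\#(H_1\cap N)\#(H_1N)$, one has $\#H_1=\#(H_1N)\cdot\#(H_1\cap N)/\#N$. A parallel counting argument — pairing each element of $T\cap H_1$ with the $H_1\cap N$-many elements in the same $N$-coset that also lie in $H_1$ (using that $T$ is $N$-coset-closed, so $T\cap H_1$ meets a given $N$-coset either not at all or in a full $H_1\cap N$-coset) — shows $\#(T\cap H_1)=\#(T\cap H_1N)\cdot\#(H_1\cap N)/\#N$. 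Dividing the last two displays yields $\#(T\cap H_1)/\#H_1=\#(T\cap H_1N)/\#(H_1N)$, which is exactly what is needed.

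The main obstacle, and the step to treat carefully, is this last bookkeeping about how $T\cap H_1$ and $T\cap H_1N$ distribute among $N$-cosets: one must verify that intersecting a full $N$-coset (which is either entirely inside $T$ or entirely outside) with the subgroup $H_1$ produces either the empty set or a single coset of $H_1\cap N$, and that the number of $N$-cosets meeting $H_1$ equals $\#(H_1N)/\#N=\#H_1/\#(H_1\cap N)$. Both facts are standard coset combinatorics, but they are exactly where the hypothesis that $N$ is normal (so that $T$ is genuinely a union of $N$-cosets) is used, and they are the crux that makes the "descent to $E^\text{normal}\cap K$" work. Everything else is either Theorem \ref{t:densities} or Lemma \ref{l:productset} applied mechanically.
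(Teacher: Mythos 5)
Your proof is correct and reaches the same reduction as the paper — namely that, with $G=\Gal(L/\QQ)$, $H_1=\Gal(L/K)$, $N=H_2^J$ the normal core of $H_2=\Gal(L/E)$, and $T=\bigcup_t tH_2t^{-1}$, one must show $\#(T\cap H_1)/\#H_1=\#(T\cap H_1N)/\#(H_1N)$ — but the route to that identity differs from the paper's. The paper first proves the analogous equality for each \emph{subgroup} $S=H_2^I$ (applying Lemma~\ref{l:productset} and the fact that $SH_1N=SH_1$ since $N\subseteq S$ is normal), and then extends from the subgroups $H_2^{\{j\}}$ to their union $T$ by the principle of inclusion--exclusion, using additivity of both sides in $S$. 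You instead observe at the outset that $T$ is a union of $N$-cosets (because $N\subseteq tH_2t^{-1}$ for every $t$), and then carry out a direct coset count: each $N$-coset inside $T$ meeting $H_1$ contributes $\#(H_1\cap N)$ to $\#(T\cap H_1)$ and $\#N$ to $\#(T\cap H_1N)$, while cosets missing $H_1$ contribute to neither, so the ratio $\#(T\cap H_1):\#(T\cap H_1N)$ equals $\#(H_1\cap N):\#N$, and Lemma~\ref{l:productset} supplies the matching index ratio $\#H_1:\#(H_1N)$. Both arguments hinge on the normality of $N$ and on Lemma~\ref{l:productset}; yours trades the inclusion--exclusion step for a structural observation about $T$ being $N$-saturated, which is arguably cleaner and makes the role of normality more transparent, whereas the paper's version keeps the algebra entirely at the level of subgroup indices.
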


\begin{proof}
We adopt the notation from the proof of 
Theorem~\ref{t:densities}.
Let $H_2^{\{j\}}$ be the conjugates of $H_2$ indexed by a set $J$. For a set $I \subseteq J$ we write $H_2^I = \bigcap_{i \in I} H^{\{i\}}$. The field $E^\text{normal} \cap K$ corresponds to the subgroup $\langle H_1,H_2^J \rangle \subseteq G$ generated by $H_1$ and $H_2^J$.
(Since $H_2^J$ is normal one can actually show that $\langle H_1,H_2^J \rangle = H_1H_2^J$.)
It follows from  Lemma~\ref{l:productset} that
\[
\frac{\#(S \cap H_1)}{\#H_1} = \frac{\#(S \cap \langle H_1,H_2^J \rangle)}{\#\langle H_1,H_2^J \rangle},
\]
when  $S$ is equal to $H_2^I$ for any $I \subseteq J$. Since both sides are additive in $S$, the statement extends to $S=T= \bigcup_{j \in J} H_2^{\{j\}}$ by the principle of inclusion and exclusion.
\end{proof}

\begin{proof}[Proof of Theorem \ref{t:remark}]
Combine Proposition \ref{t:deltainclosure} with  \eqref{eq:new-delta}.
\end{proof}

Our remaining results summarise some special situations in which we can use 
Theorem \ref{t:densities} and 
Proposition \ref{t:deltainclosure} to  calculate the densities $\delta(E,K)$ easily.

\begin{lemma}
If $E/\QQ$ is Galois, then $\delta(E,K)= 1-\frac{\deg(E \cap K)}{\deg E}$.
\end{lemma}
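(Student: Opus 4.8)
The plan is to deduce the statement directly from Theorem~\ref{t:densities}. Choose any finite Galois extension $L/\QQ$ containing the compositum $KE$, and write $G=\Gal(L/\QQ)$, $H_1=\Gal(L/K)$, $H_2=\Gal(L/E)$, so that $K$ and $E$ are the fixed fields of $H_1$ and $H_2$ respectively. The key observation is that, since $E/\QQ$ is Galois, $H_2$ is a \emph{normal} subgroup of $G$; equivalently, $E$ is its own unique conjugate. Consequently an element $\sigma\in\Gal(L/K)$ fixes a conjugate of $E$ if and only if it fixes $E$ pointwise, i.e.\ $\sigma\in H_2$. Hence the numerator appearing in Theorem~\ref{t:densities} equals $\#(H_1\cap H_2)=\#\Gal(L/KE)$.

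It remains only to evaluate $\#(H_1\cap H_2)/\#H_1=[L:KE]/[L:K]=1/[KE:K]$. Since $E/\QQ$ is Galois, restriction gives the standard isomorphism $\Gal(KE/K)\cong\Gal(E/(E\cap K))$, so $[KE:K]=[E:E\cap K]=\deg E/\deg(E\cap K)$. Therefore
$$
\delta(E,K)=1-\frac{\#(H_1\cap H_2)}{\#H_1}=1-\frac{\deg(E\cap K)}{\deg E},
$$
as claimed. Alternatively, one may first apply Proposition~\ref{t:deltainclosure}: as $E^{\text{normal}}=E$, it gives $\delta(E,K)=\delta(E,E\cap K)$, and then Theorem~\ref{t:densities} applied with $L=E$ over the base field $E\cap K$ shows that the only conjugate of $E$ inside $E$ is $E$ itself, fixed by the identity alone, whence $\delta(E,E\cap K)=1-1/[E:E\cap K]$ and the same formula follows.

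There is no genuine obstacle here; the computation is routine once the definitions are unwound. The only point requiring a moment's care is the reduction ``$\sigma$ fixes a conjugate of $E$'' $\Longleftrightarrow$ ``$\sigma$ fixes $E$'', which is precisely where the normality of $\Gal(L/E)$ in $\Gal(L/\QQ)$ — that is, the hypothesis that $E/\QQ$ is Galois — is used.
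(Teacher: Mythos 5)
Your proof is correct. Your primary route applies Theorem~\ref{t:densities} directly in a common Galois extension $L\supseteq KE$: since $E/\QQ$ is Galois, $H_2=\Gal(L/E)$ is normal in $G$, so the union of conjugates $T$ collapses to $H_2$ itself, and the density is read off as $\#(H_1\cap H_2)/\#H_1 = 1/[KE:K] = 1/[E:E\cap K]$. The paper instead routes through Proposition~\ref{t:deltainclosure} first (using $E^{\text{normal}}=E$ to get $\delta(E,K)=\delta(E,E\cap K)$) and then evaluates $\delta(E,E\cap K)$ via the Galois extension $E/(E\cap K)$; this is precisely the alternative you sketch in your last paragraph. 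The two arguments are equivalent in content — the index identity $[KE:K]=[E:E\cap K]$ in your version plays the same role as the reduction to the base field $E\cap K$ in the paper's — so this is essentially the same proof, with your primary version merely unwinding Theorem~\ref{t:densities} by hand rather than invoking Proposition~\ref{t:deltainclosure}.
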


\begin{proof}
Since $E/\mathbb Q$ is Galois, $E$ is also Galois over $E^\text{normal} \cap K = E \cap K$. Thus we conclude
$
\delta(E,K)= \delta(E,E \cap K) = 1-\frac1{[E\colon E\cap K]}$. 
\end{proof}

\begin{lemma}
If $E \subseteq K$ then $\delta(E,K)=0$.
\end{lemma}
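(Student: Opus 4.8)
The plan is to invoke Proposition~\ref{t:deltainclosure}, which reduces the computation of $\delta(E,K)$ to the field $E^{\text{normal}}\cap K$. So first I would observe that, under the hypothesis $E\subseteq K$, we certainly have $E^{\text{normal}}\cap K\supseteq E$ as well; in fact $E^{\text{normal}}\cap K$ contains $E$ and is a subfield of $E^{\text{normal}}$, so it is an intermediate field $E\subseteq E^{\text{normal}}\cap K\subseteq E^{\text{normal}}$.

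Next, the key point is to show $\delta(E,F)=0$ whenever $E\subseteq F\subseteq E^{\text{normal}}$. Using the notation of the proof of Theorem~\ref{t:densities}, write $L=E^{\text{normal}}$ (or any Galois extension of $\QQ$ containing $F$), with $G=\Gal(L/\QQ)$, and let $E$ and $F$ be the fixed fields of $H_2$ and $H_1$ respectively; since $F\subseteq E\subseteq L$ we have $H_1\supseteq H_2$, hence $H_1\supseteq H_2^{\{j\}}$ would fail in general — so instead I would argue directly. The set $T=\bigcup_{t\in G}tH_2t^{-1}$ from the proof of Theorem~\ref{t:densities} contains $H_1$ precisely when every element of $H_1$ fixes some conjugate of $E$. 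When $F=E^{\text{normal}}\cap K$ one has $H_1=\Gal(L/F)$, and because $F\subseteq E^{\text{normal}}$, every $\sigma\in H_1$ fixes $F$ pointwise and hence fixes at least one of the conjugates $E^{\{j\}}$ of $E$ (indeed, $F$ is the intersection of $K$ with the compositum of all conjugates of $E$, and any automorphism fixing $F$ must permute those conjugates while fixing their "diagonal"; more simply, since $\Gal(E^{\text{normal}}/F)$ is generated by the $\Gal(E^{\text{normal}}/E^{\{j\}})$, each element lies in one of them). Thus $H_1\subseteq T$, so $T\cap H_1=H_1$ and the formula from Theorem~\ref{t:densities} gives $\delta(E,F)=1-\#H_1/\#H_1=0$.

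Putting the two steps together: by Proposition~\ref{t:deltainclosure}, $\delta(E,K)=\delta(E,E^{\text{normal}}\cap K)$, and since $E\subseteq E^{\text{normal}}\cap K\subseteq E^{\text{normal}}$ the previous paragraph gives $\delta(E,E^{\text{normal}}\cap K)=0$. Alternatively — and this is cleaner — one can avoid the subtlety about conjugates entirely: when $E\subseteq K$, choose the common Galois closure $L\supseteq K\supseteq E$, so that in the notation of Theorem~\ref{t:densities} we have $H_1=\Gal(L/K)\subseteq\Gal(L/E)=\langle$ the conjugates of $H_2\rangle$; every $\sigma\in H_1$ fixes $E$ pointwise, hence fixes $E$ itself (a fortiori a conjugate of $E$), so $T\cap H_1=H_1$ and $\delta(E,K)=1-1=0$.

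The only genuine obstacle is bookkeeping about conjugates of $E$: one must be careful that "$\sigma$ fixes a conjugate of $E$" is interpreted as "there exists $t\in G$ with $\sigma\in tH_2t^{-1}$", and verify that $H_1\subseteq E\Leftrightarrow H_1$ fixes $E$ pointwise $\Rightarrow H_1\subseteq T$. This is immediate once one takes $L$ to contain $K$, so in practice the lemma is a one-line consequence of Theorem~\ref{t:densities}: $T\supseteq H_2\supseteq\{$elements fixing $E$ pointwise$\}\supseteq H_1$, whence $\#(T\cap H_1)/\#H_1=1$ and $\delta(E,K)=0$.
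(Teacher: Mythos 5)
Your closing ``cleaner alternative'' and final paragraph are exactly the paper's argument: since $E\subseteq K$ is equivalent to $H_1\subseteq H_2$ (with $H_1=\Gal(L/K)$, $H_2=\Gal(L/E)$, $L$ a common Galois closure), one has $H_1\subseteq H_2\subseteq T$, hence $\#(T\cap H_1)/\#H_1=1$ and $\delta(E,K)=0$ by \eqref{eq:milk}. That part is correct and is the intended proof.

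The first two paragraphs, however, are both unnecessary and flawed. The detour through Proposition~\ref{t:deltainclosure} is not needed, and the argument there contains errors. First, a sign slip: with $E\subseteq F$ and $E,F$ the fixed fields of $H_2,H_1$ respectively, you get $H_2\supseteq H_1$, not ``$F\subseteq E$ \ldots\ $H_1\supseteq H_2$'' as written. More seriously, the parenthetical justification ``since $\Gal(E^{\text{normal}}/F)$ is generated by the $\Gal(E^{\text{normal}}/E^{\{j\}})$, each element lies in one of them'' is a genuinely false step: a group generated by a family of subgroups is not in general the union of those subgroups, and in any case $\langle$conjugates of $H_2\rangle$ is the normal closure of $H_2$, whose fixed field is $\bigcap_j E^{\{j\}}$, which need not equal $F=E^{\text{normal}}\cap K$. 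The same confusion reappears in the assertion ``$\Gal(L/E)=\langle$the conjugates of $H_2\rangle$'' (it is just $H_2$). None of this is load-bearing once you switch to the clean argument, but you should discard the first route rather than present it as an alternative derivation.
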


\begin{proof}
Since $K,E$ are the fixed fields of the subgroups $H_1, H_2 \subseteq  \Gal(L/\mathbb Q)$, we have $E \subseteq K$ if and only if $H_2 \supseteq H_1$.
But then  $H_1 \subseteq H_2 \subseteq T
= \bigcup_{t \in G} tH_2 t^{-1}$, whence $\tfrac{\#(T \cap H_1)}{\#H_1} = 1$ in \eqref{eq:milk}.
\end{proof}

\begin{lemma}
If $K/\QQ$ is Galois and  $KE=E^\text{normal}$, then
$
\delta(E,K)=1-\frac{\deg(E\cap K)}{\deg E}.$
\end{lemma}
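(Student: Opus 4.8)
The plan is to return to the group-theoretic dictionary used in the proof of Theorem~\ref{t:densities}. Note first that $K \subseteq KE = E^{\text{normal}}$, so we are free to take the Galois extension $L$ appearing in that theorem to be $E^{\text{normal}}$ itself. Write $G = \Gal(L/\QQ)$ and let $H_1, H_2 \subseteq G$ be the subgroups with fixed fields $K$ and $E$ respectively, so that formula \eqref{eq:milk} reads $\delta(E,K) = 1 - \#(T \cap H_1)/\#H_1$ with $T = \bigcup_{t \in G} t H_2 t^{-1}$. The hypotheses translate as follows: $K/\QQ$ being Galois means that $H_1$ is normal in $G$; the choice $L = E^{\text{normal}}$ means that the normal core $\bigcap_{t \in G} tH_2t^{-1}$ of $H_2$ is trivial; and $KE = E^{\text{normal}} = L$, together with the fact that the compositum $KE$ corresponds to $H_1 \cap H_2$, forces $H_1 \cap H_2 = \{1\}$.

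From here the computation is short. Using that $H_1$ is normal, for every $t \in G$ we obtain $tH_2t^{-1} \cap H_1 = t(H_2 \cap t^{-1}H_1t)t^{-1} = t(H_2 \cap H_1)t^{-1} = \{1\}$, hence $T \cap H_1 = \{1\}$ and $\#(T \cap H_1)/\#H_1 = 1/\#H_1$. Applying Lemma~\ref{l:productset} to the subgroups $H_1$ and $H_2$ and using $H_1 \cap H_2 = \{1\}$ gives $\#(H_1H_2) = \#H_1\cdot\#H_2$; since $H_1$ is normal, $H_1H_2 = \langle H_1,H_2\rangle$ is the subgroup whose fixed field is $E \cap K$. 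Taking indices in $G$ therefore yields
\[
\frac{1}{\#H_1} = \frac{\#H_2}{\#\langle H_1,H_2\rangle} = \frac{[G:\langle H_1,H_2\rangle]}{[G:H_2]} = \frac{[E\cap K:\QQ]}{[E:\QQ]} = \frac{\deg(E\cap K)}{\deg E},
\]
and substituting this into the expression for $\delta(E,K)$ above completes the argument.

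There is no real obstacle here; the only point requiring care is the translation between the field-theoretic hypotheses and their group-theoretic counterparts, in particular recognising that $L = E^{\text{normal}}$ is an admissible choice in Theorem~\ref{t:densities} (it contains $K$) and that it makes the normal core of $H_2$ trivial, so that the assumption $KE = E^{\text{normal}}$ collapses to $H_1 \cap H_2 = \{1\}$. If one prefers to keep $L$ arbitrary, the same proof works verbatim with $\{1\}$ replaced throughout by the normal core $N = \bigcap_{t\in G} tH_2t^{-1}$: one shows $T \cap H_1 = H_1 \cap H_2 = N$ using normality of $H_1$, and then applies Lemma~\ref{l:productset} with $\#(H_1 \cap H_2) = \#N$, at the cost of slightly heavier bookkeeping.
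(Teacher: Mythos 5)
Your proof is correct and follows essentially the same route as the paper. Both arguments translate the hypotheses via \eqref{eq:milk} and use normality of $H_1$ to pin down $T \cap H_1$ as the normal core $H_2^J$ of $H_2$ (which in your specialization $L = E^{\text{normal}}$ is trivial, making $T\cap H_1=\{1\}$), and then finish with an index computation. The only cosmetic difference is at the end: the paper keeps $L$ general, observes $\#(T\cap H_1)/\#H_1 = \deg K / \deg(KE)$, and invokes the Galois-theory identity $[KE:K]=[E:E\cap K]$, whereas you fix $L=E^{\text{normal}}$ and deduce the index formula from Lemma~\ref{l:productset}; as you note yourself, these are interchangeable.
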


\begin{proof}
Since  $KE=E^\text{normal}$ and $K/\QQ$ is Galois we have $H_1 \cap H_2^{\{j\}} = H_2^J$ for all $j\in J$. Thus
\begin{align*}
\frac{\#(T\cap H_1)}{\#H_1} = \frac{\#H_2^J}{\#H_1}
 = \frac{\deg K}{\deg E^\text{normal}} = \frac{\deg K}{\deg KE}
\end{align*}
in \eqref{eq:milk}.
Since $K$ is Galois we have $[KE \colon K] = [E \colon E \cap K]$, from which the lemma follows.
\end{proof}

\section{Pairs of integers with Frobenian conditions}\label{sec:proofofmaincount}

We say that a set 
$\mathcal{P}$ of rational primes is {\em Frobenian} if there is a finite Galois extension 
$K/\QQ$ and a union of conjugacy classes $H$ in 
$\Gal(K/\QQ)$ such that $\mathcal{P}$ is 
 equal to the set of primes $p$ 
 that are unramified in $K$ and 
 for which the Frobenius conjugacy class of $p$ in
 $\Gal(K/\QQ)$ lies in $H$. In this section we produce an asymptotic formula for the density of 
 coprime integers $a_0,a_1$ which are  both  power-full  and lie in the multiplicative span of a Frobenian set of primes. 

It will be convenient to introduce the notation 
\begin{equation}\label{eq:CS}
c_S(\alpha)=\prod_{p\in S}\left(1-\frac{1}{p^\alpha}\right),
\end{equation}
for any $\alpha>0$ and any finite set of primes $S$.
We shall prove the following result. 

\begin{prop}\label{prop:maincount}
For $i \in \{0,1\}$ let $m_i\in \NN$ and let $\mathcal P_i$ be a Frobenian set of rational primes of density $\partial_i$. Then, for any finite set of primes $S$, we have 
\begin{align*}
\#&
\left\{  (a_0,a_1)\in \ZZp^2: |a_i|\leq B, ~p\not\in S \Rightarrow \big[ m_i\mid v_p(a_i) \text{ and } (p\mid a_i \Rightarrow p\in \mathcal{P}_i)\big] \right\} \\
&\hspace{4cm}
\sim
c_{m_i,\mathcal P_i,S}\frac{B^{1/m_0+1/m_1}}{(\log B)^{2- \partial_0-\partial_1}},
\end{align*}
as $B\to \infty$,  where
\begin{align*}
c_{m_i,\mathcal P_i,S}=~&
\frac{4m_0^{1-\partial_0}m_1^{1-\partial_1}}
{\Gamma(\partial_0)\Gamma(\partial_1)}
\cdot 
\frac{c_S(\frac{1}{m_0}+\frac{1}{m_1})}{c_S(\frac{1}{m_0})c_S(\frac{1}{m_1})}
\prod_{\substack{p\in \mathcal{P}_0\cap \mathcal{P}_1\\ p\not \in S}}\left(1-\frac{1}{p^2}\right)
\\
&\quad \times 
\prod_{p\in  \mathcal{P}_0\cap S}\left(1-\frac{1}{p}\right)
\prod_{p\in \mathcal{P}_0}\left(1-\frac{1}{p}\right)^{-1+\partial_0}
\prod_{p\not \in \mathcal{P}_0}\left(1-\frac{1}{p}\right)^{\partial_0}\\
&\quad \times 
\prod_{p\in  \mathcal{P}_1\cap S}\left(1-\frac{1}{p}\right)
\prod_{p\in \mathcal{P}_1}\left(1-\frac{1}{p}\right)^{-1+\partial_1}
\prod_{p\not \in \mathcal{P}_1}\left(1-\frac{1}{p}\right)^{\partial_1}.
\end{align*}
\end{prop}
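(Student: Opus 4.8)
The plan is to reduce the counting problem to a multiplicative Dirichlet-series computation and then apply a Tauberian theorem. First I would encode the arithmetic condition on each $a_i$ as a multiplicative constraint: writing $a_i = \pm n_i$ with $n_i \in \NN$, the condition ``$p \notin S \Rightarrow (m_i \mid v_p(a_i)$ and $(p \mid a_i \Rightarrow p \in \mathcal P_i))$'' says that, away from $S$, $n_i$ is an $m_i$-th power supported on primes of $\mathcal P_i$, while at primes of $S$ there is no constraint. Thus I would write $n_i = r_i s_i^{m_i}$ where $r_i$ is an $S$-unit (a product of primes in $S$, with bounded multiplicity absorbed later) and $s_i$ is supported on $\mathcal P_i \cup S$. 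The coprimality condition $\gcd(a_0,a_1)=1$ couples the two variables: it forbids a common prime, which away from $S$ means $\gcd(s_0,s_1)$ has no prime outside $S$, and the primes in $\mathcal P_0 \cap \mathcal P_1$ outside $S$ are exactly the ones whose Euler factors must be modified to enforce this — this is the source of the factor $\prod_{p \in \mathcal P_0 \cap \mathcal P_1, p \notin S}(1 - p^{-2})$.

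Next I would set up the Dirichlet series. Ignoring $S$ for a moment and dropping coprimality, the generating series for valid $n_i$ is
\[
F_i(s) = \prod_{p \in \mathcal P_i}\Bigl(1 - \frac{1}{p^{m_i s}}\Bigr)^{-1},
\]
which near $s = 1/m_i$ behaves like $(m_i s - 1)^{-\partial_i}$ times a holomorphic non-vanishing function, by the Frobenian nature of $\mathcal P_i$ (Chebotarev / the Selberg–Delange machinery gives the density $\partial_i$ of $\mathcal P_i$ and hence the order of the singularity). The relevant two-variable object is $\sum_{\gcd(a_0,a_1)=1} |a_0|^{-s_0}|a_1|^{-s_1}$ restricted to valid pairs; by multiplicativity this is an Euler product whose local factor at $p \notin S$ is $1 + (\text{$p$-part of }F_0) + (\text{$p$-part of }F_1)$ (the cross term is killed by coprimality), and at $p \in S$ is the unrestricted factor $(1-p^{-s_0})^{-1}(1-p^{-s_1})^{-1}$ or similar. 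Taking $s_0 = s_1 = s$ and comparing with $\zeta(m_0 s)^{\partial_0}\zeta(m_1 s)^{\partial_1}$ near the dominant singularity — which is $s = 1/\min(m_0,m_1)$ if $m_0 \neq m_1$, but the statement's exponent $B^{1/m_0 + 1/m_1}$ suggests we are really after the joint count where both variables are simultaneously large, so I would instead keep the two summation variables separate and run a two-dimensional contour/Tauberian argument, or equivalently apply a hyperbola-type decomposition and handle each variable with a one-variable Selberg–Delange theorem. The power of $\log B$ is $2 - \partial_0 - \partial_1$, matching the sum of the two singularity orders $\partial_i$ each contributing $(\log B)^{\partial_i - 1}$.

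The constant $c_{m_i,\mathcal P_i,S}$ would then be assembled from: the $\Gamma(\partial_i)^{-1}$ factors and the $m_i^{1-\partial_i}$ factors coming from the Selberg–Delange / Tauberian main term for a Dirichlet series with a branch-point singularity of order $\partial_i$ at $s = 1/m_i$ (the $m_i$ enters by the change of variable $m_i s \mapsto s$, and the factor $4$ from the two sign choices of each $a_i$); the ratio $c_S(\tfrac{1}{m_0}+\tfrac1{m_1})/(c_S(\tfrac1{m_0})c_S(\tfrac1{m_1}))$ and the products $\prod_{p \in \mathcal P_i \cap S}(1-p^{-1})$ from correcting the Euler factors at primes of $S$ where the constraint is relaxed; the product $\prod_{p \in \mathcal P_0 \cap \mathcal P_1, p \notin S}(1-p^{-2})$ from the coprimality correction described above; and the convergence factors $\prod_{p \in \mathcal P_i}(1-p^{-1})^{\partial_i - 1}\prod_{p \notin \mathcal P_i}(1-p^{-1})^{\partial_i}$ which are exactly what is needed to turn the divergent product $\prod_{p \in \mathcal P_i}(1-p^{-1/m_i})^{-1}$ into a convergent one after dividing out the $\zeta$-factor $(1-p^{-1})^{-\partial_i}$. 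The main obstacle I anticipate is making the two-variable Tauberian step rigorous with uniform control of error terms — in particular justifying the separation of the two singularities and the interchange of the coprimality sieve (a Möbius inversion over $d \mid \gcd(a_0,a_1)$) with the asymptotic analysis, keeping the error terms summable over $d$. I would expect to invoke a ready-made result here, and indeed the paper signals that it relies on Arango-Pi\~neros--Keliher--Keyes \cite{arxiv} precisely for counting power-full integers in the multiplicative span of Frobenian sets; so the cleanest route is to cite that result for the one-variable asymptotics and then perform only the coprimality decoupling and the $S$-factor bookkeeping by hand.
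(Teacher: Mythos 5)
Your plan follows the same structural skeleton as the paper: strip off the $S$-part and the $m_i$-th power part of each $a_i$, detect the coprimality $\gcd(a_0,a_1)=1$ by M\"obius inversion over a common divisor supported on $\mathcal{P}_0\cap\mathcal{P}_1$ away from $S$, and reduce to a one-variable count of integers supported on a Frobenian set. The difference is in the analytic engine: the paper does \emph{not} set up a Dirichlet series or invoke a Selberg--Delange contour argument. Instead, for the one-variable count $M_i(x)=\#\{v\leq x: p\mid v\Rightarrow p\in\mathcal{P}_i\setminus S\}$ it applies Wirsing's elementary real-variable theorem on mean values of multiplicative functions in $[0,1]$, with the single outside input being the Mertens-type asymptotic $\prod_{p\leq x,\, p\in\mathcal{P}_i}(1-1/p)^{-1}$ for Frobenian $\mathcal{P}_i$, which is exactly what the cited Arango-Pi\~neros--Keliher--Keyes result supplies (not, as you suggest, a ready-made count of power-full integers in a Frobenian span). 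Your approach would work, but yours is the heavier route and imports analytic-continuation technology that the paper avoids.

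One part of your plan is a detour you can safely discard. You flag ``the two-variable Tauberian step'' as the main obstacle and contemplate a hyperbola decomposition. In fact the constraint is a box $|a_0|\leq B$, $|a_1|\leq B$, not a hyperbola, so once the M\"obius sum over the common factor $k$ is in place the counting function factors exactly as a sum of products $M_0\bigl(k^{-1}(B/b_0)^{1/m_0}\bigr)M_1\bigl(k^{-1}(B/b_1)^{1/m_1}\bigr)$; no two-variable contour or hyperbola method is required, only term-by-term substitution of the one-variable asymptotic with uniformity in $k,b_0,b_1$. There is also a small slip in your decomposition: after extracting the $S$-part $r_i$, the remaining $s_i$ is supported on $\mathcal{P}_i\setminus S$, not $\mathcal{P}_i\cup S$, which is what makes the factorisation $n_i=r_is_i^{m_i}$ canonical. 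None of these issues is fatal, but simplifying to the decoupled one-variable treatment and being precise about what is $S$-supported versus $S$-coprime would align your write-up with the actual argument.
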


There are only $O(1)$ elements with $a_0a_1=0$ that contribute to 
the counting function. 
Let $M(B)=M(m_i,\mathcal{P}_i,B,S)$ denote the overall  contribution with $a_0a_1\neq 0$.
Hence, on accounting for signs, we have
\begin{align*}	
M(B)
=4
\#\left\{  (a_0,a_1)\in \NN^2: 
\begin{array}{l}
a_0,a_1\leq B,~ \gcd(a_0,a_1)=1\\ 
p\not\in S \Rightarrow \big[ m_i\mid v_p(a_i) \text{ and } (p\mid a_i \Rightarrow p\in \mathcal{P}_i)\big] 
\end{array}
\right\}.
\end{align*}
For $(a_0,a_1)$ appearing in the counting function, 
we may clearly write $$
a_0=b_0u_0^{m_0}\quad \text{ and }\quad
a_1=b_1u_1^{m_1}, 
$$
where  $p\mid b_0b_1 \Rightarrow p\in S$, $\gcd(u_0u_1,\prod_{p\in S}p)=1$,  and 
$p\mid u_i \Rightarrow p\in \mathcal{P}_i$. Moreover, we have $\gcd(b_0,b_1)
=\gcd(u_0,u_1)=1$.
Let $\mathcal{Q}=\mathcal{P}_0\cap\mathcal{P}_1$.

We proceed by introducing the counting functions 
$$
M_i(x)
=\#\left\{
v\leq x: p\mid v \Rightarrow p\in \mathcal{P}_{i,S}\right\},
$$
for $i=0,1$, 
where
$
\mathcal{P}_{i,S}=
\mathcal{P}_i\setminus (S\cap \mathcal{P}_i).
$
On using the M\"obius function to detect the condition 
$\gcd(u_0,u_1)=1$, we may now write
\begin{align*}
M(B)
&=4
\sum_{\substack{b_0,b_1\in \NN\\ \gcd(b_0,b_1)=1\\
p\mid b_0b_1 \Rightarrow p\in S}}
\sum_{\substack{k\in \NN\\
p\mid k\Rightarrow p\in \mathcal{Q}_{S}
}} \mu(k)
M_0\left(k^{-1}(B/b_0)^{1/m_0}\right)
M_1\left(k^{-1}(B/b_1)^{1/m_1}\right),
\end{align*}
where
$
\mathcal{Q}_{S}=
\mathcal{Q}\setminus (S\cap \mathcal{Q}).
$
The treatment of $M_i(x)$ is handled by the following result.

\begin{lemma}\label{lem:M(x)}
Let $i\in \{0,1\}$. Then 
$$
M_i(x)\sim \frac{ \kappa_{i,S}}{\Gamma(\partial_i)} \frac{x}{(\log x)^{1-\partial_i}},
$$
as $x\to \infty$, 
where
\begin{equation}\label{eq:kappaS}
\kappa_{i,S}=
\prod_{p\in  \mathcal{P}_i\cap S}\left(1-\frac{1}{p}\right)
\prod_{p\in \mathcal{P}_i}\left(1-\frac{1}{p}\right)^{-1+\partial_i}
\prod_{p\not \in \mathcal{P}_i}\left(1-\frac{1}{p}\right)^{\partial_i}.
\end{equation}
\end{lemma}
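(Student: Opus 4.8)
The plan is to reduce the count $M_i(x)$ to a standard application of the Selberg--Delange method (or equivalently Landau's theorem on integers composed of primes in a fixed set). First I would associate to the set $\mathcal{P}_{i,S}$ the Dirichlet series $F_i(s) = \sum_{v} v^{-s}$, where the sum is over integers $v\geq 1$ all of whose prime factors lie in $\mathcal{P}_{i,S}$. By multiplicativity this has the Euler product $F_i(s)=\prod_{p\in \mathcal{P}_{i,S}}(1-p^{-s})^{-1}$, valid for $\Re(s)>1$. Since $\mathcal{P}_i$ is Frobenian of density $\partial_i$, and $\mathcal{P}_{i,S}$ differs from $\mathcal{P}_i$ only by the finitely many primes in $S$, the key analytic input is that $\sum_{p\in \mathcal{P}_{i,S}} p^{-s}$ behaves like $\partial_i \log\frac{1}{s-1}$ as $s\to 1^+$; more precisely, comparing with the full zeta function, $F_i(s)$ admits a factorisation $F_i(s) = G_i(s)\,\zeta(s)^{\partial_i}$ where $G_i(s)$ is holomorphic and non-vanishing in a neighbourhood of $\Re(s)\geq 1$. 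This is exactly the situation covered by the Selberg--Delange machinery, which then yields $M_i(x)=\sum_{v\leq x,\, p\mid v\Rightarrow p\in\mathcal{P}_{i,S}} 1 \sim \frac{G_i(1)}{\Gamma(\partial_i)}\,\frac{x}{(\log x)^{1-\partial_i}}$.

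It then remains to identify the constant $G_i(1)$ with $\kappa_{i,S}$ of \eqref{eq:kappaS}. Here I would write
\[
F_i(s) = \zeta(s)^{\partial_i}\prod_{p\in \mathcal{P}_{i,S}}\Bigl(1-\frac1{p^s}\Bigr)^{-1}\Bigl(1-\frac1{p^s}\Bigr)^{\partial_i}\prod_{p\notin \mathcal{P}_{i,S}}\Bigl(1-\frac1{p^s}\Bigr)^{\partial_i},
\]
so that $G_i(s)=\prod_{p\in \mathcal{P}_{i,S}}(1-p^{-s})^{-1+\partial_i}\prod_{p\notin \mathcal{P}_{i,S}}(1-p^{-s})^{\partial_i}$. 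Evaluating at $s=1$, the second product over $p\notin \mathcal{P}_{i,S}$ splits as the product over $p\notin \mathcal{P}_i$ times the product over $p\in \mathcal{P}_i\cap S$; the latter gives the factor $\prod_{p\in \mathcal{P}_i\cap S}(1-p^{-1})^{\partial_i}$, while the first product over $p\in\mathcal{P}_{i,S}$ equals $\prod_{p\in \mathcal{P}_i}(1-p^{-1})^{-1+\partial_i}$ times $\prod_{p\in \mathcal{P}_i\cap S}(1-p^{-1})^{1-\partial_i}$. Combining these, the $\mathcal{P}_i\cap S$ contributions telescope to $\prod_{p\in \mathcal{P}_i\cap S}(1-p^{-1})$, and one recovers precisely the three products in \eqref{eq:kappaS}. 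The convergence of $G_i(s)$ near $s=1$ needs a brief justification: each local factor is $1+O(p^{-2\Re(s)})$ uniformly, since the logarithm of $(1-p^{-s})^{\pm 1+\partial_i}$ or $(1-p^{-s})^{\partial_i}$ is $O(p^{-\Re(s)})$ individually but the $p^{-s}$ terms cancel between the Euler product of $F_i$ and the $\partial_i$-th power of $\zeta$, leaving a series absolutely convergent for $\Re(s)>\tfrac12$.

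The main obstacle is confirming that the Frobenian hypothesis gives enough analytic regularity to run Selberg--Delange in the required form, namely that $\sum_{p\in\mathcal{P}_i}p^{-s}-\partial_i\log\zeta(s)$ extends holomorphically past $\Re(s)=1$. This follows from the Chebotarev/Frobenian structure: writing $\mathcal{P}_i$ via a Galois extension $K/\QQ$ and a union of conjugacy classes, the sum $\sum_{p\in\mathcal{P}_i}p^{-s}$ is a rational combination of logarithms of Artin $L$-functions, all of which are holomorphic and non-vanishing on $\Re(s)=1$ except for the simple pole of the trivial character contributing the main term $\partial_i\log\frac1{s-1}$. Once this is in place, the tauberian step is entirely standard, and the only remaining work — matching $G_i(1)$ with $\kappa_{i,S}$ — is the bookkeeping indicated above. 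I would cite a standard reference for the Selberg--Delange theorem (e.g.\ Tenenbaum) for the asymptotic, so that the proof in the paper reduces to verifying the hypotheses and computing the constant.
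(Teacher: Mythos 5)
Your proposal is correct, but it takes a genuinely different analytic route from the paper. The paper avoids complex analysis entirely: it invokes Wirsing's theorem on mean values of multiplicative functions, which requires only the elementary Chebotarev-type input $\sum_{p\leq x,\,p\in\mathcal{P}_{i,S}}\log p\sim\partial_i x$, and then converts the resulting Mertens product $\prod_{p\leq x,\,p\in\mathcal{P}_i}(1-p^{-1})^{-1}$ into an explicit constant by quoting the recent result of Arango-Pi\~neros, Keliher and Keyes (Mertens' theorem for Chebotarev sets), which is cited in the bibliography as \cite{arxiv}. You instead run the Selberg--Delange machinery, which needs the stronger analytic input that $\sum_{p\in\mathcal{P}_i}p^{-s}-\partial_i\log\zeta(s)$ continues holomorphically past $\Re(s)=1$ --- this does hold, via Brauer induction and the non-vanishing of Artin $L$-functions on $\Re(s)=1$, as you indicate --- and in return the constant $G_i(1)$ falls out directly from the Euler factorisation, obviating the need for a separate Mertens-for-Chebotarev-sets result. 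Your bookkeeping that identifies $G_i(1)$ with $\kappa_{i,S}$ is correct: splitting $\mathcal{P}_{i,S}^c$ as the disjoint union $\mathcal{P}_i^c\sqcup(\mathcal{P}_i\cap S)$, the exponents over $\mathcal{P}_i\cap S$ combine to $(1-\partial_i)+\partial_i=1$ as you say. In short, the paper's proof is more elementary (Wirsing plus a black-boxed Mertens-type statement) while yours is the classical analytic alternative; both are complete and give the same constant.
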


\begin{proof}
Let $i\in \{0,1\}$.
There are several approaches to estimating $M_i(x)$, but the one we shall adopt is via a general result of Wirsing \cite{wirsing} on mean values of  multiplicative arithmetic functions $g:\NN\to [0,1]$. (In fact, this result applies to general
non-negative  multiplicative arithmetic functions under further assumptions on the behaviour of $g$ at prime powers.) Suppose  that 
$$
\sum_{p\leq x} g(p)\log p\sim \tau x,
$$
for some $\tau>0$.
Then it follows that 
$$
\sum_{n\leq x} g(n) \sim \frac{e^{-\gamma \tau}}{\Gamma(\tau)} \frac{x}{\log x} \prod_{p\leq x} \left(1+\frac{g(p)}{p}+\frac{g(p^2)}{p^2}+\dots\right),
$$
where $\gamma$ is Euler's constant.  

In our case we take
$$
g(n)=\begin{cases}
1 &\text{ if $p\mid n \Rightarrow p\in \mathcal{P}_{i,S}$,}\\
0 &\text{ otherwise.}
\end{cases}
$$
 Then, since $ \mathcal{P}_{i}$ is a Frobenian set of primes of density $\partial_i$, it follows 
from the Chebotarev density theorem that 
 \begin{align*}
\sum_{p\leq x} g(p)\log p
&=
\sum_{\substack{p\leq x\\ p\in \mathcal{P}_{i,S}}} \log p \sim \partial_i \log x,
\end{align*}
as $x\to \infty$.
Hence $\tau=\partial_i$ and we obtain
$$
M(x)\sim
 \frac{e^{-\gamma\partial_i}}{\Gamma(\partial_i)} \frac{x}{\log x} \prod_{\substack{p\leq x\\
 p\in \mathcal{P}_{i,S}}}
  \left(1-\frac{1}{p}\right)^{-1},
$$
as $x\to \infty$. It remains to study 
$$
\prod_{\substack{p\leq x\\
 p\in \mathcal{P}_{i,S}}}
  \left(1-\frac{1}{p}\right)^{-1}=
  \prod_{p\in \mathcal{P}_i\cap S}\left(1-\frac{1}{p}\right)
 \prod_{\substack{p\leq x\\ p\in \mathcal{P}_i}}
  \left(1-\frac{1}{p}\right)^{-1}.
$$
However, on appealing to recent work of 
Arango-Pi\~neros, Keliher and Keyes \cite[Thm.~A]{arxiv}, we quickly arrive at the expression
\begin{align*}
\prod_{\substack{p\leq x\\ p\in \mathcal{P}_i}}
  \left(1-\frac{1}{p}\right)^{-1}
  &\sim 
\left(\frac{\log x}{e^{-\gamma_K}}\right)^{\partial_i},
\end{align*}
as $x\to \infty$, 
where
$$
e^{-\gamma_K}=e^{-\gamma} \prod_{p\in \mathcal{P}_i}\left(1-\frac{1}{p}\right)^{\partial_i^{-1}-1}
\prod_{p\not \in \mathcal{P}_i}\left(1-\frac{1}{p}\right)^{-1}.
$$
It now follows that 
$$
\prod_{\substack{p\leq x\\
 p\in \mathcal{P}_{i,S}}}
  \left(1-\frac{1}{p}\right)^{-1}\sim \kappa_{i,S} ( \log x)^{\partial_i} 
  e^{\gamma\partial_i},
  $$
  in the notation of lemma. 
Inserting this into our previous asymptotic formula for $M_i(x)$, we finally arrive at the statement of the lemma.
 \end{proof}

We clearly have 
\begin{align*}
\left(\log \left(k^{-1}(B/b_i)^{1/m_i}\right)\right)^{-(1-\partial_i)}
&=m_i^{1-\partial_i}(\log B)^{-(1-\partial_i)}\left(1+O\left(\frac{\log kb_i}{\log B}\right)\right),
\end{align*}
for $i=0,1$.
Hence, on substituting 
Lemma \ref{lem:M(x)} into our previous expression for $M(B)$, we thereby obtain 
\begin{align*}
M(B)
&=~4
\sum_{\substack{b_0,b_1 \in \NN 
\\ \gcd(b_0,b_1)=1\\
p\mid b_0b_1 \Rightarrow p\in S}}
\sum_{\substack{k\in \NN\\ 
p\mid k\Rightarrow p\in \mathcal{Q}_{S}
}} 
\hspace{-0.2cm}
A_{b_0,b_1,k}(B)
+o\left(\frac{B^{1/m_0+1/m_1}}{(\log B)^{2-\partial_0-\partial_1}}\right),
\end{align*}
with 
\begin{align*}
A_{b_0,b_1,k}(B)
&=\frac{\kappa_{0,S} \kappa_{1,S}}{\Gamma(\partial_0)\Gamma(\partial_1)}\cdot 
\frac{\mu(k) m_0^{1-\partial_0}m_1^{1-\partial_1}\left(k^{-1}(B/b_0)^{1/m_0}\right)\left(k^{-1}(B/b_1)^{1/m_1}\right)}{(\log B)^{2-\partial_0-\partial_1}}\\
&=
\frac{\kappa_{0,S} \kappa_{1,S}}{\Gamma(\partial_0)\Gamma(\partial_1)}\cdot 
m_0^{1-\partial_0}m_1^{1-\partial_1} \cdot 
\frac{B^{1/m_0+1/m_1}}{(\log B)^{2-\partial_0-\partial_1}} \cdot 
\frac{\mu(k)}{k^2} \cdot \frac{1}{b_0^{1/m_0} b_1^{1/m_1}},
\end{align*}
and where $\kappa_{0,S}, \kappa_{1,S}$  are given by 
\eqref{eq:kappaS}

Next, 
on recalling the notation of \eqref{eq:CS}, 
a simple calculation furnishes the identities
$$
\sum_{\substack{b_0,b_1 \in \NN 
\\ \gcd(b_0,b_1)=1\\
p\mid b_0b_1 \Rightarrow p\in S}}
 \frac{1}{b_0^{1/m_0}b_1^{1/m_1}}=
\frac{c_S(\frac{1}{m_0}+\frac{1}{m_1})}{c_S(\frac{1}{m_0})c_S(\frac{1}{m_1})}
$$
and 
$$
\sum_{\substack{k\in \NN\\ 
p\mid k\Rightarrow p\in \mathcal{Q}_{S}
}} \frac{\mu(k)}{k^2}=
\prod_{\substack{p\in \mathcal{P}_0\cap \mathcal{P}_1\\ p\not \in S}}\left(1-\frac{1}{p^2}\right).
$$
Hence, it follows that the asymptotic formula in Proposition 
\ref{prop:maincount} holds with the leading constant
\begin{align*}
c_{m_i,\mathcal P_i,S}&=4
\cdot 
\frac{\kappa_{0,S} \kappa_{1,S}}{\Gamma(\partial_0)\Gamma(\partial_1)}\cdot 
m_0^{1-\partial_0}m_1^{1-\partial_1} \cdot 
\frac{c_S(\frac{1}{m_0}+\frac{1}{m_1})}{c_S(\frac{1}{m_0})c_S(\frac{1}{m_1})} \cdot 
\prod_{\substack{p\in \mathcal{P}_0\cap \mathcal{P}_1\\ p\not \in S}}\left(1-\frac{1}{p^2}\right),
\end{align*}
where $\kappa_{0,S}, \kappa_{1,S}$ are given by 
\eqref{eq:kappaS}. This therefore  completes the proof of 
 Proposition~\ref{prop:maincount}.

\section{Orbifolds and \'etale orbifold morphisms}
\label{sec:orb}

Campana related the study of fibrations $\pi \colon X \to Y$ of varieties over a fixed field $k$ to orbifolds on the base
\cite{campana1}. He studied {\em multiplicity orbifolds}, but since these are the only orbifolds in this paper we will simply call them {\em orbifolds}.
In this section we summarise  the construction of the most important invariant of  orbifolds.

\subsection{Orbifold pairs}

Throughout this section let $k$ be an arbitrary field
of characteristic $0$.

\begin{defin}
An  \textit{orbifold} is a pair $(B, \Delta)$, where $B$ is a normal, proper $k$-scheme and $\Delta$ is a $\mathbb Q$-divisor
\[
\Delta = \sum_{D} \left(1-\frac1{m_D}\right) [D]
\]
for positive integers $m_D$ associated to prime divisors $D$ on $B$.
We call $m_D$ the \textit{multiplicity} of the orbifold over $D$.
\end{defin}

\begin{defin}\label{def:orb_mor}
Let $(B, \Delta)$ be an orbifold on a normal and proper $k$-variety $B$.
A \textit{finite \'etale (orbifold) morphism} is a morphism $\theta \colon C \to B$, with $C$ normal, 
which is
\begin{enumerate}
\item[(i)] finite,
\item[(ii)] \'etale away from $\Delta$,
\item[(iii)] has the property $e(D'/D) \mid m_D$, for any 
prime divisor $D'\mid D$ (meaning any   
 prime divisor $D' \subset C$ above $D \subset B$), 
where $e(D'/D)$ is the ramification index.
\end{enumerate}
\end{defin}

Let us explain the use of the word \'etale.
Consider a finite dominant morphism
$\theta \colon C \to B$ 
 between integral, normal, proper $k$-varieties.  Then we can always 
endow $B$ with an orbifold structure such that $\theta$ becomes a finite \'etale orbifold morphism, by assigning $m_D = \mathrm{lcm}\{e(D'/D) \colon D'\mid D\}$.
If $B$ has an orbifold divisor $\Delta$ under which $\theta$ is a finite \'etale orbifold morphism, then we can endow $C$ with the $\QQ$-divisor
\[
\Delta_C = \sum_{D'} \left(1-\frac1{m_{D'}}\right) [D'], \quad \text{where }m_{D'} = \frac{m_D}{e(D'/D)}.
\]
This is the unique orbifold structure on $C$ such that the orbifold morphism $(C,\Delta_C) \to (B,\Delta)$ is \'etale in codimension 1, in the sense of \cite[Definition~2.21]{campana2011}.
In the latter case,  the Riemann--Hurwitz formula yields
$$
K_{C,\Delta_C} = \theta^* K_{B,\Delta},
$$
where $K_{B,\Delta} = K_B + \Delta$ is the {\em canonical divisor class} on an orbifold $(B,\Delta)$.
(This statement can be proven along similar lines to the proof of Proposition~\ref{prop:thin}(c).)

\begin{prop}\label{prop:multiplicities under normalisation}
Let $C_1,C_2 \to C$ be morphisms of normal $k$-varieties. Let $V = \widetilde{C_1 \times_C C_2}$ be the normalisation of the product $C_1 \times_C C_2$. \begin{center}
\begin{tikzpicture}[auto]
\node (L1) {$C_1 \times_C C_2$};
\node (TL) [above left= .2cm of L1] {$V$};
\node (L2) [below= 0.9cm of L1] {$C_2$};
\node (M1) [right= 0.9cm  of L1] {$C_1$};
\node (M2) at (M1 |- L2) {$C$};
\draw[->] (L1) to node {} (M1);
\draw[->] (L2) to node {} (M2);
\draw[->] (L1) to node {}  (L2);
\draw[->] (M1) to node {} (M2);
\draw[->] (TL) to node {} (L1);
\draw[->] (TL) to node {} (M1);
\draw[->] (TL) to node {} (L2);
\end{tikzpicture}
 \end{center}
Let $D_V \subset V$ be a prime divisor lying above prime divisors $D_i \subset C_i$ and $D \subset C$.
Then
\[
e(D_V/D_1) = \frac{e_2}{\gcd(e_1,e_2)},
\]
where $e_i = e(D_i/D)$ for $i=1,2$.
\end{prop}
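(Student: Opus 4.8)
The plan is to reduce the statement to a local assertion about discrete valuation rings (DVRs) and then read off the answer from the structure theory of complete DVRs in characteristic zero; once reduced to DVRs this is essentially Abhyankar's lemma, whose tameness hypothesis is automatic in our setting. First I would localise at the generic points of the relevant divisors: replace $C$, $C_1$, $C_2$ by $\Spec A$ and $\Spec A_i$, where $A := \mathcal O_{C,D}$ and $A_i := \mathcal O_{C_i,D_i}$ are DVRs carrying injective local homomorphisms $A \to A_i$ under which a uniformiser of $A$ has valuation $e_i$ in $A_i$. Since $V$ is finite over $C_1 \times_C C_2$, the image of $D_V$ in $C_1\times_C C_2$ is a prime divisor, with generic point $\eta$ say, and $B := \mathcal O_{V,D_V}$ is a localisation at a maximal ideal of the integral closure of the one-dimensional local domain $\mathcal O_{C_1\times_C C_2,\eta}$; both this domain and $B$ dominate $A_1$ and $A_2$. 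The quantity to compute is $e(D_V/D_1) = v_B(\varpi_1)$, where $\varpi_1$ is a uniformiser of $A_1$ and $v_B$ is the normalised valuation of $B$.

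Next I would pass to completions; this leaves all ramification indices of divisors unchanged and, since the rings involved are excellent, it commutes with normalisation. By Cohen's structure theorem for complete equicharacteristic-zero DVRs, $\widehat A \cong F[[t]]$ with $F = \kappa(D)$, and $\widehat{A_i} \cong F_i[[t_i]]$ with $F_i = \kappa(D_i)$ and $t = u_i t_i^{e_i}$ for a unit $u_i$; since $k$ has characteristic zero the principal units of $\widehat{A_i}$ form a uniquely $e_i$-divisible group, so after rescaling $t_i$ by a unit I may take $u_i = c_i \in F_i^\times$. The completed local ring of the fibre product at $\eta$ is then a quotient by a minimal prime of $(F_1\otimes_F F_2)[[t_1,t_2]]/(c_1 t_1^{e_1} - c_2 t_2^{e_2})$, and the branch corresponding to $D_V$ is (over an algebraic closure of the residue field at $\eta$) one irreducible factor of $c_1 t_1^{e_1} - c_2 t_2^{e_2} = c_1 \prod_{\zeta^g = 1}\bigl(t_1^{b} - \zeta\delta\, t_2^{a}\bigr)$, where $g = \gcd(e_1,e_2)$, $a = e_2/g$, $b = e_1/g$ and $\delta^g = c_2/c_1$. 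Because $\gcd(a,b) = 1$, each factor $t_1^b - \zeta\delta\, t_2^a$ defines a branch whose normalisation is a power series ring in one variable $s$ with $t_1 \mapsto (\textup{unit})\, s^{a}$ and $t_2 \mapsto (\textup{unit})\, s^{b}$; taking $\varpi_1 = t_1$, this yields $v_B(\varpi_1) = a = e_2/\gcd(e_1,e_2)$, as claimed. (Note that the tower relation $e(D_V/D) = e_1\, e(D_V/D_1) = e_2\, e(D_V/D_2)$ alone only gives $\mathrm{lcm}(e_1,e_2)\mid e(D_V/D)$; the substance of the argument — equivalently, of Abhyankar's lemma — is that this divisibility is an equality, which the branch computation above supplies.)

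I expect the main obstacle to be the bookkeeping when the residue field extensions $\kappa(D_i)/\kappa(D)$ are infinite, which does occur over higher-dimensional bases, where these residue fields are function fields of the divisors; then $F_1\otimes_F F_2$ need not be a finite product of fields. It is still reduced (we are in characteristic zero), so it has well-defined minimal primes and residue fields, and one has to check carefully that the minimal primes of the completed fibre product genuinely correspond to the prime divisors of $V$ above $D$, $D_1$, $D_2$ — again a consequence of excellence (compatibility of normalisation with completion) — and that fixing the component through which $D_V$ passes does not disturb the branch analysis. The reason the whole argument is insensitive to these residue extensions is the standard fact that enlarging the residue field changes the residue degree of an extension of DVRs but not its ramification index, so at each stage one may freely base change to an algebraic closure. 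Modulo this, the argument is exactly as above.
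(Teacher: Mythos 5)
Your proof is correct and follows essentially the same route as the paper's: localise at the generic points of the divisors, complete, and analyse the factorisation of $t_1^{e_1}-t_2^{e_2}$ (or its twist by constants) into branches $t_1^{b}-\zeta\delta\,t_2^{a}$ with $a=e_2/\gcd(e_1,e_2)$, $b=e_1/\gcd(e_1,e_2)$, each of which normalises to a DVR in which $t_1$ has valuation $a$. The paper exhibits the integral closure concretely via the element $T=t_1^{\alpha_2}t_2^{\alpha_1}$ (with $\alpha_1 e_1+\alpha_2 e_2=\gcd(e_1,e_2)$), which is your uniformiser $s$ in disguise; you instead cite the standard $\gcd(a,b)=1$ branch parametrisation, which is an equivalent and slightly less explicit way of making the same point. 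The one place where you are more careful than the paper is the residue-field bookkeeping: the paper summarily writes the completions as $k[\![t]\!]\to k[\![t_i]\!]$, silently suppressing the (possibly transcendental) residue extensions $\kappa(D_i)/\kappa(D)$ and the unit twisting $t\mapsto u_i t_i^{e_i}$; you justify why this loses nothing, namely that principal units in characteristic zero are uniquely divisible (so the unit can be absorbed up to a constant) and that ramification indices are insensitive to residue-field base change, together with excellence to ensure normalisation commutes with completion. Both of your appeals are standard and correct, and they tighten a point the paper leaves implicit behind the phrase ``étale locally over $D$.''
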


\begin{proof}
Replacing the prime divisors with their generic points we can compute the normalisation \'etale locally over $D$. Hence we consider the normalisation of the tensor product of the two homomorphisms $\rho_i \colon k[\![ t ]\!] \to k[\![ t_i ]\!]$ given by $ t \mapsto t_i^{e_i}$. The tensor product is $R=k[\![ t_1,t_2 ]\!]/(t_1^{e_1} - t_2^{e_2})$ generated by the images of the $t_i$. Let us define $d= \gcd(e_1,e_2)$.
We can write $R$ as the product
\[
R = \prod_{\stackrel{p \mid X^d-1}{\text{irr.}}} k[\![ t_1,t_2,\zeta ]\!]/(p(\zeta),t_1^{e_1/d}-\zeta t_2^{e_2/d}).
\]
All factors are principal ideal domains, since polynomials $X^a-\lambda Y^b$ with $\lambda \in k^\times$ and $\gcd(a,b)=1$, are even irreducible over an algebraic closure $\bar{k}$. 
We will compute the integral closure of each component separately. Let us write $\alpha_1 e_1 + \alpha_2 e_2 = d$ for $\alpha_i \in \mathbb Z$. Then $T = t_1^{\alpha_2} t_2^{\alpha_1}$ is integral in each factor, since 
$T^{e_1/d}=t_2 \cdot (t_1^{e_1/d}/t_2^{e_2/d})^{\alpha_2}$ and 
$T^{e_2/d}=t_1 \cdot (t_2^{e_2/d}/t_1^{e_1/d})^{\alpha_1}$.
It follows that 
\[
k[\![ t_1,t_2,\zeta ]\!]/(p(\zeta),t_1^{e_1/d}-\zeta t_2^{e_2/d}) \hookrightarrow k[\![ T, \zeta ]\!]/(p(\zeta))
\]
is the integral closure.
Finally, 
to compute $e(D_V/D_1)$ we look at the image of $t_1$ under the map $$
k[\![ t_1 ]\!] \to k[\![ T, \alpha ]\!]/(p(\alpha)),
$$ 
which has valuation $e_2/d$.
\end{proof}

\begin{rem}\label{rem:cap}
In 
\cite[Definition~11.1]{campana2011}, Campana  defines the orbifold fundamental group $\pi_1(X\vert \Delta)$ for a complex orbifold $(X\vert \Delta)$ and relates it to \textit{covers unramified away from $\Delta$}. Likewise, we can define the  (algebraic) orbifold fundamental group and relate it to  the structure of all  finite \'etale orbifold morphisms over a fixed base $(B,\Delta)$ of dimension $1$. (Note that we could do this in arbitrary dimension, if we allow finite \'etale morphisms to be defined away from a codimension $2$ locus.) 
Consider  the category $\FEt_{(B,\Delta)}$ of all finite \'etale orbifold morphisms to $(B,\Delta)$, where the morphisms are given by $B$-morphisms. Given a point $\bar{x} \in B(\bar{k})\setminus \supp(\Delta)$ we have the fibre functor 
$$
F\colon \FEt_{(B,\Delta)} \to \Sets
$$
given by $C \mapsto C_{\bar{x}}$, and one can show that $(\FEt_{(B,\Delta)}, F)$ is a Galois category. The only non-trivial part is to show that $\FEt_{(B,\Delta)}$ has products, but this follows from Proposition~\ref{prop:multiplicities under normalisation}. In particular, this implies that for any two finite \'etale covers of $(C,\partial)$, there is another cover mapping to both.
We define the {\em (algebraic) orbifold fundamental group} $\pi^\text{orb}_1(B,\Delta)$ to be the automorphism group of the fibre functor $F$.
Many relations between the topological and algebraic fundamental group can be directly translated to fundamental groups of orbifolds. For example, if $k \subset \mathbb C$ then 
\[
\pi^\text{orb}_1(B,\Delta) = \widehat{\pi_1(B(\mathbb C)\vert\Delta)}.
\]
Campana studied the complex orbifold fundamental group in \cite[Sections~11 and 12]{campana2011} and has several results and conjectures about their structure.
\end{rem}

For our application we will need the following definition.

\begin{defin}\label{def:G-torsor}
Let $G/k$ be a finite \'etale group and $(B,\Delta)$ an orbifold. Let $\theta \colon C \to B$ be a finite \'etale orbifold  morphism endowed with a $G$-action on $C$, which is  compatible with $\theta$. We say that $\theta$ is a \textit{$G$-torsor (of orbifolds)} if the restriction of $\theta$ away from the support of $\Delta$ is a $G$-torsor.
\end{defin}

Since we are dealing with curves, it makes sense to talk about torsors. The natural morphism $G \times_B C \to C \times_B C$ is not necessarily an isomorphism over $B$, but it is so over $B\setminus \Delta$ by definition. Since $G \times_B C$ is a smooth curve over $k$, this morphism factors through the normalisation $G \times_B C \to \widetilde{C \times_B C} \to C \times_B C$. Now $G \times_B C \to \widetilde{C \times_B C}$ is morphism between normal curves, which is an isomorphism on a dense open subset. Note that this agrees with the observation that $\widetilde{C \times_B C} \to C$ is unramified by Proposition~\ref{prop:multiplicities under normalisation}; $\widetilde{C \times_B C}$ is just a union of copies of $C$.

Categorically, the product of two normal covers of $B$ is the normalisation of the usual fibre product, which means that a  $G$-torsor of orbifolds is indeed a torsor.

\subsection{Orbifold base of a fibration}
As we saw in Section~\ref{sec:intro}, we  can associate a natural orbifold to any fibration. 
In this section we discuss this further, before passing to our reasoning behind  Conjecture \ref{con2}.

\begin{defin}\label{def:rose}
Consider a fibration $\pi\colon X \to Y$, which we assume is  a morphism between integral, normal, proper $k$-schemes such that the generic fibre is geometrically irreducible.
For a prime divisor $D \subset Y$ with generic point $\eta_D$ we define $m_D$ as the minimum multiplicity of the components of $X_{\eta_D}$ as a divisor on $X$. The \textit{orbifold base} of $\pi$ is $(Y,\partial_\pi)$ where
\[
\partial_\pi = \sum_D \left( 1- \frac1{m_D}\right) [D].
\]
\end{defin}

Possibly up to thin sets, we expect the geometry of the base orbifold $(Y,\partial_\pi)$ to govern the arithmetic properties of the fibration.  We henceforth focus our  attention on standard fibrations $\pi:X\to \PP^1$ defined over $\QQ$, with the aim of interpreting the growth of the counting function
$
N_{\text{loc}}(\pi,B)$ that was defined in \eqref{eq:count}.
Occasionally we will write $N^\circ_{\text{loc}}(\pi,B)$ for the same counting function, but excluding the finitely many points in the orbifold divisors $\partial_\pi$.

Let us begin by discussing the conjectured power of  $B$ in Conjecture \ref{con2}, which is equal to
\begin{equation}\label{eq:**}
2-\deg \partial_\pi=-\deg(K_{\PP^1,\partial_\pi}),
\end{equation}
where
$K_{\PP^1,\partial_\pi}=K_{\PP^1}+\partial_\pi$. 
The following result relates the geometry of $\pi$ to the geometry of a normalisation of the fibre product of  $\pi$ with a finite cover.

\begin{prop}\label{prop:thin}
Let  $\pi:X \to \mathbb P^1$ be  a standard fibration and let $$\theta \colon \PP^1 \to \PP^1$$ 
be a (possibly ramified)  finite cover of
degree $d$. We define $\pi_\theta:X_\theta\to \PP^1$ to be the normalisation of the fibre product of $\theta$ and $\pi$.
Then we have the following properties.
\begin{enumerate}
\item[(a)] $\pi_\theta \colon X_\theta \to \mathbb P^1$ is a standard fibration.
\item[(b)] 
The orbifold multiplicities $m_{P'}$ for $\pi_{\theta}$ satisfy
\[
m_{P'} \geq \frac{m_P}{e(P'/P)}
\]
for any 
prime divisor $P'$ of $\PP^1$, where  $P = \theta(P')$.
We have equality precisely when
condition (iii) in Definition  \ref{def:orb_mor} is satisfied at $P'$.
\item[(c)] We have
\[
\deg (K_{\mathbb P^1,\partial_{\pi_\theta}} )\geq d \deg (K_{\mathbb P^1,\partial_\pi}),
\]
with  equality precisely when $\theta$ is a finite \'etale orbifold morphism.
\end{enumerate}
\end{prop}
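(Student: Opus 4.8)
The plan is to treat the three parts essentially in the order (a), (b), (c), since each feeds the next. For part (a), the point is that normalising the fibre product preserves the relevant structural properties: $X_\theta$ is integral (one must check that the generic fibre of $\pi_\theta$ is geometrically irreducible — here I would argue that the generic fibre of $\pi$ is already geometrically irreducible, and a base change along the finite separable extension of function fields $k(\PP^1)/k(\PP^1)$ induced by $\theta$ keeps it so, after normalisation, because geometric irreducibility of the generic fibre of $\pi$ means $k(X)$ is a regular extension of $k(\PP^1)$, hence $k(X)\otimes_{k(\PP^1)}k(\PP^1)$ is a field and remains a regular extension of the new base). Properness and normality of $X_\theta$ are built in (properness is stable under base change and normalisation is finite, hence proper; normality is the defining property of the normalisation), and $\dim X_\theta=\dim X$. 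So $\pi_\theta$ is again standard.

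For part (b), the key input is precisely Proposition~\ref{prop:multiplicities under normalisation}, applied with $C_1=X$, $C_2=\PP^1$ (the source of $\theta$), $C=\PP^1$ (the target), so that $V=\widetilde{X\times_{\PP^1}\PP^1}=X_\theta$. Fix a prime divisor $P'\subset \PP^1_{\text{source}}$ lying over $P=\theta(P')\subset\PP^1_{\text{target}}$, with ramification index $e=e(P'/P)$. A component $W$ of $\pi^{-1}(P)$ of multiplicity $\mu$ (as a divisor on $X$) is a prime divisor of $X$ above $P$ with $e(W/P)=\mu$. For a prime divisor $W'$ of $X_\theta$ lying over both $W$ and $P'$, Proposition~\ref{prop:multiplicities under normalisation} gives $e(W'/W)=\mu/\gcd(e,\mu)$, and hence the multiplicity of $W'$ as a fibre component of $\pi_\theta$ over $P'$ is $e(W'/P')=e(W'/W)\cdot e(W/P)/e(P'/P)$... more cleanly: the multiplicity of $W'$ in $\pi_\theta^{-1}(P')$ equals $e(W'/P')$, and by multiplicativity of ramification indices along $W'\to W\to P$ and $W'\to P'\to P$ one gets $e(W'/P')=\mu e /\gcd(e,\mu)=\mathrm{lcm}(e,\mu)/1\cdot(1/1)$; in any case the relevant comparison is $e(W'/P')=\mathrm{lcm}(e,\mu)/e\geq \mu/e$, with equality iff $e\mid \mu$. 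Taking the minimum over components $W$ of $\pi^{-1}(P)$ (the minimum over $W'$ above $P'$ is attained at components lying over a minimal-multiplicity $W$) yields $m_{P'}=\min_W \mathrm{lcm}(e,m_{W})/e\geq m_P/e$, with equality exactly when $e\mid m_W$ for the minimising $W$, i.e. $e(P'/P)\mid m_P$ — which is condition (iii) of Definition~\ref{def:orb_mor} at $P'$. The main subtlety here is the bookkeeping of which component over $P'$ realises the minimum multiplicity; I expect this to be the most delicate point, and it should be handled by noting that $\gcd(e,\mu)\leq e$ always, so $\mathrm{lcm}(e,\mu)/e=\mu/\gcd(e,\mu)$ is minimised (for fixed $e$) by minimising $\mu/\gcd(e,\mu)$, which is $\leq\mu$ and equals $\mu$ only when $\gcd(e,\mu)=1$; a short case analysis shows the global minimum over all $W'$ above $P'$ equals $\min_W\bigl(m_W/\gcd(e,m_W)\bigr)$, and this is $\geq m_P/e$ with the stated equality condition.

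For part (c), I combine (b) with the Riemann--Hurwitz formula for $\theta\colon\PP^1\to\PP^1$. Write $\theta^*K_{\PP^1_{\text{target}}}=K_{\PP^1_{\text{source}}}-R$ where $R=\sum_{P'}(e(P'/P)-1)[P']$ is the ramification divisor (using $\mathrm{char}\,k=0$, so tameness holds and there is no wild contribution), and $\deg(\theta^*K_{\PP^1})=d\cdot\deg K_{\PP^1}=-2d$. Now
\[
\theta^*\partial_\pi=\sum_{P'}\Bigl(1-\frac{1}{m_P}\Bigr)e(P'/P)[P'],\qquad
\partial_{\pi_\theta}=\sum_{P'}\Bigl(1-\frac{1}{m_{P'}}\Bigr)[P'],
\]
so by (b), $m_{P'}\geq m_P/e(P'/P)$ gives, coefficientwise,
\[
1-\frac{1}{m_{P'}}\ \geq\ 1-\frac{e(P'/P)}{m_P}\ =\ \Bigl(1-\frac{1}{m_P}\Bigr)e(P'/P)-\bigl(e(P'/P)-1\bigr),
\]
i.e. $\partial_{\pi_\theta}\geq \theta^*\partial_\pi-R$ as divisors, with equality in every coefficient iff $e(P'/P)\mid m_P$ for all $P'$, i.e. iff $\theta$ is a finite \'etale orbifold morphism. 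Therefore
\[
\deg(K_{\PP^1,\partial_{\pi_\theta}})=\deg K_{\PP^1_{\text{source}}}+\deg\partial_{\pi_\theta}
\ \geq\ \deg K_{\PP^1_{\text{source}}}+\deg(\theta^*\partial_\pi)-\deg R
= \deg(\theta^*K_{\PP^1})+\deg(\theta^*\partial_\pi)=d\,\deg(K_{\PP^1,\partial_\pi}),
\]
using that $\deg K_{\PP^1_{\text{source}}}-\deg R=\deg(\theta^*K_{\PP^1})$ by Riemann--Hurwitz. The inequality is an equality precisely when the coefficientwise inequalities above are all equalities, which is exactly the condition that $\theta$ is a finite \'etale orbifold morphism for $(\PP^1,\partial_\pi)$. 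This completes all three parts; the only genuinely technical step is the normalisation computation in (b), and that has already been isolated in Proposition~\ref{prop:multiplicities under normalisation}, so the remaining work is the Riemann--Hurwitz accounting, which is routine.
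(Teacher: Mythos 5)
Your proof is correct and follows essentially the same route as the paper: part (a) is a routine check, part (b) rests on Proposition~\ref{prop:multiplicities under normalisation}, and part (c) combines (b) with Riemann--Hurwitz. Two small differences: in (b) you apply the multiplicity proposition with the roles of $C_1,C_2$ swapped relative to the paper (taking $C_1=X$, $C_2$ the source $\PP^1$, hence reading off $e(W'/W)$ rather than $e(W'/P')$ directly and then chaining), which costs you an extra multiplicativity step but lands on the same formula $m_{P'}(W')=m_P(W)/\gcd(m_P(W),e(P'/P))$; and in (c) you package the paper's termwise computation of $\deg K_{C',\partial'}-d\deg K_{C,\partial}$ as a coefficientwise divisor inequality $\partial_{\pi_\theta}\geq\theta^*\partial_\pi-R$, which is a slightly tidier bookkeeping of the same content. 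One remark: your parenthetical claim in (b) that the minimum over $W'$ is attained over a minimal-multiplicity $W$ is not literally true (e.g.\ $e=4$, $m_W\in\{3,4\}$ gives ratios $3$ and $1$), but as you yourself note, the correct statement $m_{P'}=\min_W m_W/\gcd(e,m_W)\geq m_P/e$ holds anyway and yields the stated equality criterion.
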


\begin{proof}
\begin{enumerate}
\item[(a)] This is clear from the definition.
\item[(b)]
Consider a component $Z'$ of the fibre of $\pi_\theta$ over a prime divisor $P'$ of $\PP^1$. Suppose that $Z'$ lies over $Z \subseteq X$ and $P'$ lies over $P$. Let $m_{P'}(Z')$ and $m_P(Z)$ denote the multiplicities of these components in their resepective fibres. 
We wish to apply Proposition~\ref{prop:multiplicities under normalisation} 
with $C_1\to C$ being the morphism $\theta:\PP^1\to \PP^1$ and $C_2\to C$ being the morphism 
$\pi:X\to \PP^1$. Then $V\to C_1$ is the morphism $\pi_\theta: X_\theta\to \PP^1$. 
It follows that 
$$
e(Z'/P') = \frac{e(Z/P)}{\gcd\left(e(Z/P),e(P'/P)\right)}.
$$
Hence, since the ramification indices over a codimension one point are precisely
the multiplicities of the different components of the fibre, we obtain
\[
m_{P'}(Z') = \frac{m_P(Z)}{\gcd\!\left(m_P(Z),e(P'/P)\right)}.
\]
Since $m_P(Z) \geq m_P$ and $\gcd\!\left(m_P(Z),e(P'/P)\right) \leq e(P'/P)$ we conclude $m_{P'}(Z') \geq \frac{m_P}{e(P'/P)}$ for all components $Z'$ in the fibre over $P'$.

Clearly, if $m_{P'} = \frac{m_P}{e(P'/P)}$ we have $e(P'/P) \mid m_P$. Now suppose that $e(P'/P) \mid m_P$. To prove the statement we must  show that there is a component $Z'$ over $P'$ with $m_P(Z') = \frac{m_P}{e(P'/P)}$. By the definition of $m_P$ there exists a component $Z$ over $P$ with $m_P=m_P(Z)$. Now let $Z'$ be any component over $P'$ which lies over $P$. Then 
\[
m_{P'}(Z') = \frac{m_P(Z)}{\gcd\!\left(m_P(Z),e(P'/P)\right)} = \frac{m_P}{\gcd\!\left(m_P,e(P'/P)\right)} = \frac{m_P}{e(P'/P)}.
\]
This concludes the proof of part (b).
\item[(c)] 
We will prove the result for orbifolds equipped with a degree $d$ morphism $(C',\partial') \to (C,\partial)$, for general
smooth
 curves $C$ and $C'$,  
in order to distinguish between the two copies of $\mathbb P^1$. The statement is invariant under base change, so we can assume we are working over an algebraically closed field $k = \bar{k}$. We begin by noting that 
$$
\deg K_{C,\partial}
=2g(C)-2+\sum_{P \in C^{(1)}}\left(1-\frac1{m_P}\right) 
$$ and 
$$
\deg K_{C',\partial'}=
2g(C')-2+\sum_{P' \in {C'}^{(1)}}\left(1-\frac1{m_{P'}}\right).
$$
The Riemann--Hurwitz formula yields
\[
2g(C')-2 = d(2g(C)-2) + \sum_{P' \in C'^{(1)}}\left( e(P'/P) - 1\right),
\]
where $P = \theta(P')$. Hence
$$
\deg K_{C',\partial'}=
d(2g(C)-2) + \sum_{P' \in C'^{(1)}}\left( e(P'/P) -\frac1{m_{P'}}\right).
$$
It now follows that 
\begin{align*}
 \deg K_{C',\partial'} &- d\deg K_{C,\partial} 
 \\
 & =\sum_{P' \in {C'}^{(1)}} 
 \left( e(P'/P) -\frac1{m_{P'}}\right)-
 d\sum_{P \in C^{(1)}}\left(1-\frac1{m_P}\right) \\
 & =\sum_{P \in C^{(1)}} 
 \left[\left(\sum_{P'\mid P} e(P'/P)-d\right)
  + \left(\frac d{m_P} - \sum_{P'\mid P} \frac1{m_{P'}} \right)\right].
\end{align*}
Using $\sum_{P' \mid P} e(P'/P) = d$ we see that the first terms all vanish and so 
\[
\deg K_{C',\partial'} - d\deg K_{C,\partial} = \sum_{P \in C^{(1)}} \sum_{P'\mid P} \left(\frac{e(P'/P)}{m_P} -  \frac1{m_{P'}}\right).
\]
This is clearly non-negative by (b), and we have equality if and only if condition (iii) of 
Definition~\ref{def:orb_mor} is satisfied at all $P'$. \qedhere
\end{enumerate}
\end{proof}

In the setting of this result, it follows that the points in $N_{\text{loc}}(\pi,B)$ that are counted by
$N_{\text{loc}}(\pi_\theta, H_\theta,B)$
are expected to contribute at most to the same order of $B$, where $H_\theta$ is the pullback height along $\theta$.
Indeed, in Conjecture \ref{con1} we have
\[
N_{\text{loc}}(\pi_\theta, H_\theta,B) = O_{\ve}\left((B^{\frac{1}{d}})^{\deg(-K_{\mathbb P^1, \partial_{\pi_\theta}})+\ve }\right),
\]
for any $\ve>0$, 
where we use $B^{1/d}$ since $H_\theta$ is an $\mathcal O(d)$-height on $\mathbb P^1$.	
Hence,  in the light of Proposition~\ref{prop:thin}(c),
we should expect no higher order contribution from $N_{\text{loc}}(\pi_\theta, H_\theta,B)$ to $N_{\text{loc}}(\pi,B)$.
Moreover, we should obtain the same exponent of $B$ when $\theta$ is a finite \'etale orbifold morphism.

\medskip

We are now ready to address the possible power of $\log B$.
Let $\pi \colon X \to \mathbb P^1$ be a standard fibration and suppose that 
 $\theta \colon \mathbb P^1 \to \PP^1$
 is a
$G$-torsor of orbifolds under a finite \'etale group scheme $G$ of degree $d$, as presented in Definition~\ref{def:G-torsor}. We
  write $\theta_{v} \colon \mathbb P^1 \to \mathbb P^1$ for the twists of $\theta$ by $v \in 
  \textup{H}^1(\Gal(\bar\QQ/\QQ),G)$. Finally, we shall  write $\pi_{v} \colon X_{v} \to \mathbb P^1$ for the normalisation of the pullback of $\pi$ along $\theta_{v}$.
We are now ready to compare the counting function 
$N_{\text{loc}}^\circ(\pi,B)$ with the counting functions 
$N_{\text{loc}}^\circ(\pi_v,H_v,B)$, for various  $v \in 
  \textup{H}^1(\Gal(\bar\QQ/\QQ),G)$, where 
  $H_v$  
  is the pullback  height along $\theta_v$. (Note that this  is a $\mathcal O(d)$-height on the codomain $\PP^1$ of $\theta_v$.)

\begin{prop}\label{prop:properties}
In the setting above we have the following.
\begin{enumerate}
\item[(a)] A point $x \in \PP^1(\QQ)$ is counted by $N^\circ_{\text{loc}}(\pi,B)$ if and only if there exists $v\in \textup{H}^1(\Gal(\bar\QQ/\QQ),G)$ and $y \in \PP^1(\QQ)$ such that  $\theta_{v}(y)=x$, and such that $y$ is counted by $N^\circ_{\text{loc}}(\pi_{v},H_v,B)$.

\item[(b)] We have
$$
N^\circ_{\text{loc}}(\pi,B) =\frac{1}{\#G(\QQ)} 
\sum_{v\in \textup{H}^1(\Gal(\bar\QQ/\QQ),G)}
N^\circ_{\text{loc}}(\pi_{v},H_v,B).
$$
\item[(c)] Let $\theta_{v}^{-1}(D) = \bigcup_{1\leq i\leq s_D} E^{(i)}_{v}$ be a decomposition into irreducible components, and write $N^{(i)}_{v} = \kappa(E^{(i)}_{v})$ for their function fields. Then 
\[
\Delta(\pi_{v}) = \sum_{D \in (\mathbb P^1)^{(1)}} \sum_{i=1}^{s_D} \left(1-\delta_{D,N_{D,v}^{(i)}}(\pi)\right),
\]
where 
$\delta_{D,N_{D,v}^{(i)}}$ is given by \eqref{eq:dD}.
\item[(d)] The expression $\Delta(\pi_{v})$ only assumes finitely many values.
\end{enumerate}
\end{prop}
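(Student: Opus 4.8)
The plan is to prove the four parts in order, with (a) and (b) being essentially torsor bookkeeping, (c) following from the definition of $\Delta$ together with a local analysis of the cover, and (d) — the statement we are asked about — being a formal consequence of (c) and Theorem~\ref{t:remark}.

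For parts (a) and (b): since $\theta \colon \mathbb P^1 \to \mathbb P^1$ is a $G$-torsor of orbifolds, away from $\supp(\partial_\pi)$ it is an honest $G$-torsor in the \'etale topology. A rational point $x \in \mathbb P^1(\QQ) \setminus \supp(\partial_\pi)$ pulls back to a $G$-torsor over $\Spec\QQ$, classified by some class $v \in \textup{H}^1(\Gal(\bar\QQ/\QQ),G)$, and by the usual twisting formalism the fibre $\theta^{-1}(x)$ has a $\QQ$-point if and only if one replaces $\theta$ by its twist $\theta_v$; more precisely $x$ lifts to a $\QQ$-point $y$ of $\mathbb P^1$ along $\theta_v$ precisely for that $v$. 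Since $X_v$ is the normalisation of $X \times_{\mathbb P^1,\theta_v} \mathbb P^1$, adelic solubility of the fibre $\pi_v^{-1}(y)$ is equivalent to adelic solubility of $\pi^{-1}(x)$ (normalisation does not change the generic fibre, and over the places of good reduction the fibres agree). Finally $H_v(y) = H(\theta_v(y)) = H(x) \leq B$ since $H_v$ is by definition the pullback height. This gives (a). For (b), the fibre $\theta_v^{-1}(x)(\QQ)$, when non-empty, is a torsor under $G(\QQ)$, hence has exactly $\#G(\QQ)$ points; summing over all $v$ and dividing by $\#G(\QQ)$ counts each $x$ once, which is the claimed identity (the finitely many points in $\partial_\pi$ are excluded throughout, consistently with the $\circ$ notation).

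For part (c): by definition \eqref{eq:Delta}, $\Delta(\pi_v) = \sum_{P \in (\mathbb P^1)^{(1)}}(1 - \delta_P(\pi_v))$, where the sum is really over those $P$ lying above some $D \in (\mathbb P^1)^{(1)}$ (elsewhere $\pi_v$ is split and the term vanishes). Grouping the prime divisors $P$ of the source $\mathbb P^1$ by their image $D = \theta_v(P)$, write $\theta_v^{-1}(D) = \bigcup_{i=1}^{s_D} E_v^{(i)}$ with residue fields $N_{D,v}^{(i)} = \kappa(E_v^{(i)})$; note $s_D$ is independent of $v$ since twisting does not change the geometric picture. The key point is that the set $S_{E_v^{(i)}}$ of multiplicity-$m_{E_v^{(i)}}$ components of $\pi_v^{-1}(E_v^{(i)})$ is, after base change to $\bar\QQ$, canonically identified with $S_D$ (the minimal-multiplicity components of $\pi^{-1}(D)$): by Proposition~\ref{prop:thin}(b) the multiplicity is divided by the ramification index, uniformly, so the \emph{minimal} components upstairs are exactly those lying over the minimal components downstairs, and since the orbifold morphism is \'etale away from $\partial_\pi$ the components do not merge or split. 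Consequently $\delta_{E_v^{(i)}}(\pi_v) = \delta_{D, N_{D,v}^{(i)}}(\pi)$ in the notation of \eqref{eq:dD}: the Galois group $\Gamma_{E_v^{(i)}, N_{D,v}^{(i)}}$ is the image of $\Gal(\bar\QQ/N_{D,v}^{(i)})$ acting on $S_D$, and a fixed point on $S_{E_v^{(i)}}$ is the same as a fixed point on $S_D$. Summing over all $P = E_v^{(i)}$ over all $D$ yields the displayed formula for $\Delta(\pi_v)$.

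For part (d): by part (c), $\Delta(\pi_v) = \sum_D \sum_{i=1}^{s_D}(1 - \delta_{D,N_{D,v}^{(i)}}(\pi))$. For each fixed $D$, let $E_D$ be the field of definition of the elements of $S_D$ and let $E_D^{\mathrm{normal}}$ be its normal closure. By Theorem~\ref{t:remark} (applied with $N = N_{D,v}^{(i)}$), $\delta_{D,N_{D,v}^{(i)}}(\pi) = \delta_{D, N_{D,v}^{(i)} \cap E_D^{\mathrm{normal}}}(\pi)$, so the value of each summand depends only on the subfield $N_{D,v}^{(i)} \cap E_D^{\mathrm{normal}}$ of the fixed finite extension $E_D^{\mathrm{normal}}/\QQ$. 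There are only finitely many such subfields, hence only finitely many possible values for each summand $(1 - \delta_{D,N_{D,v}^{(i)}}(\pi))$, and since the index set $\{(D,i) : D \in (\mathbb P^1)^{(1)},\, 1 \leq i \leq s_D\}$ with $s_D \neq 0$ is itself finite (only finitely many $D$ are non-split for $\pi$), there are only finitely many possible values of the total sum $\Delta(\pi_v)$. The only mild subtlety — and the one place a little care is needed — is to check that the index set genuinely is $v$-independent, i.e.\ that $s_D$ does not depend on the twist; this holds because twisting a $G$-torsor is a purely arithmetic operation that leaves the base-changed cover $\bar\QQ \otimes \theta$ unchanged, so the number of geometric components of $\theta_v^{-1}(D)$, and hence (by the orbit structure under the now-fixed action) the number of $\QQ$-irreducible components $s_D$, ranges over a finite set as $v$ varies. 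This completes the proof.
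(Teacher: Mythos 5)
Your arguments for parts (a), (b), and (d) match the paper's approach: the paper phrases (a) via the partition $U(\QQ) = \bigsqcup_v \theta_v(U_v(\QQ))$ over the cohomology classes (the étale locus $U$ playing the role of your $\PP^1 \setminus \supp(\partial_\pi)$), which is exactly the twisting formalism you invoke, while (b) is read off from that partition and the $\#G(\QQ)$-fold fibre count, and (d) is the same direct appeal to Theorem~\ref{t:remark}.

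Your justification of (c) has a gap at the ramified locus. You claim that $S_{E_v^{(i)}}$ is canonically identified with $S_D$ ``since the orbifold morphism is \'etale away from $\partial_\pi$, so the components do not merge or split.'' That reasoning applies only to $D \notin \supp(\partial_\pi)$. But (c) is a statement about \emph{all} $D$, and at the orbifold points $D \in \supp(\partial_\pi)$ — which are the whole point of the construction — the cover $\theta_v$ is ramified of index $e := e(E_v^{(i)}/D) > 1$. There, the component count implicit in the proof of Proposition~\ref{prop:multiplicities under normalisation} (and used in Proposition~\ref{prop:thin}(b)) shows that each minimal-multiplicity component $Z \in S_D$ pulls back, in the normalisation $X_v$, to $\gcd(e, m_D)$ geometric components of multiplicity $m_D/e$; since $e \mid m_D$ this is $e$ components, not one. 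Hence $\# S_{E_v^{(i)}} = e \cdot \# S_D$ there, and the two sets are not canonically identified. The desired equality $\delta_{E_v^{(i)}}(\pi_v) = \delta_{D,\kappa(E_v^{(i)})}(\pi)$ thus requires an actual analysis of the Galois action on the larger set $S_{E_v^{(i)}}$, which is equivariantly fibred over $S_D$ with fibres of size $e$, and a verification that the proportion of $\sigma \in \Gal(\bar\QQ/\kappa(E_v^{(i)}))$ acting with a fixed point is unchanged under passing from $S_D$ to $S_{E_v^{(i)}}$ — the one-way implication (fixed point upstairs gives fixed point downstairs) is trivial, but the converse needs an argument. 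To be fair, the paper dispatches (c) with a single sentence and does not spell this out either, but the ``canonical identification'' step as you have written it does not hold at the ramified points and should not be accepted as a proof.
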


\begin{proof}
\begin{enumerate}
\item[(a)] Let $U \subseteq \PP^1$ be the image of the \'etale locus of $\theta$. The restrictions $\theta_v \colon U_v \to U$ are $G$-torsors, and so we have a partition
\[
U(\QQ) = \bigsqcup_{v\in \textup{H}^1(\Gal(\bar\QQ/\QQ),G)}\theta_{v}\left(U_v(\QQ)\right).
\]
Furthermore, the fibre of $\pi_v$ over $y \in U_v(\QQ)$ is isomorphic to the fibre of $\pi$ over $x=\theta_v(y)$. Hence one of these fibres is locally soluble 
precisely when the other is. Finally, since $\theta \colon \PP^1 \to \PP^1$ has degree $d$, the pullback of the $\mathcal O(1)$-height pulls back to a $\mathcal O(d)$-height. 
\item[(b)] This follows from the partition in (a), and the fact that each fibre has $\#G(\QQ)$ points. 
\item[(c)] This directly follows from the definition of $\delta_{D,N}$ and $\pi_v$.
\item[(d)] This follows from Theorem \ref{t:remark}. \qedhere
\end{enumerate}
\end{proof}

In the setting of Theorem \ref{t:2} we  consider $\mu_d$-covers parametrised by 
$\QQ^\times/\QQ^{\times,d}$. The following result therefore follows from part (c) of Proposition \ref{prop:properties}.

\begin{cor}\label{c:goat}
We have 
$
\Delta(\pi_v)=\Theta_v(\pi)
$ 
in \eqref{eq:theta-v}.
\end{cor}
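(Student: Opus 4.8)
The plan is to obtain the identity by unwinding Proposition~\ref{prop:properties}(c) in the case $G=\mu_d$; the only real task is to identify the \'etale algebras $N_{D,v}^{(i)}$ appearing there with the factors $N_{D,d,v}^{(k)}$ of \eqref{eq:wood}. In the situation of Theorem~\ref{t:2} we have $\partial_\pi=\bigl(1-\tfrac1{m_0}\bigr)[0]+\bigl(1-\tfrac1{m_\infty}\bigr)[\infty]$, and first I would record that the relevant finite \'etale orbifold morphism is the standard $\mu_d$-cover $\theta\colon\PP^1\to\PP^1$, which on an affine coordinate $t$ of the base is $w\mapsto t=w^d$: it is finite, \'etale away from $\{0,\infty\}$, and totally ramified over $0$ and $\infty$ with ramification index $d=\gcd(m_0,m_\infty)$. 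Since $d$ divides $m_0$ and $m_\infty$, condition (iii) of Definition~\ref{def:orb_mor} holds, and $\theta$ is a $\mu_d$-torsor of orbifolds in the sense of Definition~\ref{def:G-torsor} (indeed $\pi_1^{\mathrm{orb}}(\PP^1,\partial_\pi)$ is cyclic of order $d$, the loops around $0$ and $\infty$ being mutually inverse of orders dividing $m_0$ and $m_\infty$). By Kummer theory $\mathrm H^1(\Gal(\bar\QQ/\QQ),\mu_d)\cong\QQ^\times/\QQ^{\times,d}$, and the twist $\theta_v$ by a class $v$ is the normalisation of $\{w^d=vt\}\to\PP^1_t$, namely the morphism $w\mapsto t=w^d/v$.

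Next I would compute $\theta_v^{-1}(D)$ for each closed point $D$. If $D$ is cut out by the irreducible binary form $g$ with $g(1,0)\neq0$, so that $\kappa(D)=\QQ[x]/(g(x,1))$, then imposing $g(t,1)=0$ together with $t=w^d/v$ and clearing denominators via the homogeneity of $g$ gives
\[
\theta_v^{-1}(D)=\Spec\bigl(\QQ[w]/(g(w^d,v))\bigr).
\]
Since $g(x,1)$ is separable and $v\neq0$, the polynomial $g(w^d,v)$ is separable, so $\theta_v^{-1}(D)$ is the \'etale $\QQ$-algebra $N_{D,d,v}$ of \eqref{eq:wood} and its connected components are the $\Spec N_{D,d,v}^{(k)}$. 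If instead $D\in\{0,\infty\}$ then $\theta_v$ is totally ramified over $D$, so $\theta_v^{-1}(D)$ is a single rational point, $s_D=1$, and the corresponding factor is $\QQ=N_{D,d,v}$, again matching \eqref{eq:wood}--\eqref{eq:theta-v}. Feeding these identifications into Proposition~\ref{prop:properties}(c) yields
\[
\Delta(\pi_v)=\sum_{D\in(\PP^1)^{(1)}}\sum_{k=1}^{s_D}\bigl(1-\delta_{D,N_{D,d,v}^{(k)}}(\pi)\bigr),
\]
which is exactly $\Theta_v(\pi)$ by \eqref{eq:theta-v}, as claimed.

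Everything above is formal once Proposition~\ref{prop:properties}(c) is granted, so the point that really deserves attention---and the one I expect to be the only mild obstacle---is the behaviour over the two ramified fibres, where $\theta_v^{-1}(D)$ is not a plain base change of $D$. There Proposition~\ref{prop:properties}(c) rests on $\delta_{0'}(\pi_v)=\delta_{0}(\pi)$, and similarly at $\infty$, which I would justify using Proposition~\ref{prop:multiplicities under normalisation}: a geometric component of $\pi^{-1}(0)$ of multiplicity $\mu$ produces components of $\pi_v^{-1}(0')$ of multiplicity $\mu/\gcd(\mu,d)$, and since $d\mid m_0$ one has $\mu/\gcd(\mu,d)=m_0/d$ for $\mu\geq m_0$ exactly when $\mu=m_0$. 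Hence the minimal-multiplicity components upstairs biject $\Gal(\bar\QQ/\QQ)$-equivariantly with those downstairs, so $S_{0'}\cong S_0$ as Galois sets and the two densities coincide. This is the one place where the divisibility $d\mid\gcd(m_0,m_\infty)$ is genuinely used, and also the place to double-check that the Kummer normalisation of $\theta_v$ introduces no spurious twist; the remainder of the argument is a direct computation.
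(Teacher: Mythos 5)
Your core deduction is correct and is exactly the paper's one-line proof: the Corollary is obtained by identifying the function fields $N_{D,v}^{(i)}$ of the irreducible components of $\theta_v^{-1}(D)$ with the factors $N_{D,d,v}^{(k)}$ of the \'etale algebra in \eqref{eq:wood}, and then substituting into Proposition~\ref{prop:properties}(c). Your computation of $\theta_v^{-1}(D)=\Spec\bigl(\QQ[w]/(g(w^d,v))\bigr)$ for $D\not\in\{0,\infty\}$, and of $\theta_v^{-1}(D)$ as a single rational point with factor $\QQ$ for $D\in\{0,\infty\}$, is exactly what is needed and is accurate.

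However, the last paragraph, which you flag as the crux, contains a genuine error. You claim that ``the minimal-multiplicity components upstairs biject $\Gal(\bar\QQ/\QQ)$-equivariantly with those downstairs, so $S_{0'}\cong S_0$ as Galois sets.'' This does not follow. By Proposition~\ref{prop:multiplicities under normalisation} applied over $\bar\QQ$, each geometric component $Z$ of $\pi^{-1}(0)$ of minimal multiplicity $m_0$ gives rise to exactly $\gcd(m_0,d)=d$ geometric components of $\pi_v^{-1}(0')$, each of multiplicity $m_0/d$; the natural map $S_{0'}\to S_0$ is therefore $d$-to-$1$, not a bijection. Moreover, the Galois action on the $d$ preimages of $Z$ need not be trivial even when $\sigma$ fixes $Z$: already for $m_0=d=2$ and $\pi^{-1}(0)=2Z$ with $Z$ geometrically integral (so $\delta_{0,\QQ}(\pi)=1$), the normalised pullback has two geometric components of multiplicity $1$ over $0'$ which are interchanged by $\Gal(\QQ(\sqrt{v})/\QQ)$ for non-square $v$, so that $\delta_{0'}(\pi_v)=\tfrac12\neq\delta_{0,\QQ}(\pi)$. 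So the asserted identity $\delta_{0'}(\pi_v)=\delta_0(\pi)$ is false in general, and the argument you give for it does not go through. Since the Corollary, as the paper proves it, is a formal consequence of Proposition~\ref{prop:properties}(c) alone, and you obtain the deduction correctly, your proof is fine up to that point; but the additional ``verification'' of the proposition at the ramified fibres should be removed or replaced, as the bijection claim on which it rests is wrong.
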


In principle there might be infinitely many twists $\pi_{v}$ for which $\Delta(\pi_{v})$ differs from the expected exponent  $\Delta(\pi)$ defined in \eqref{eq:Delta}.
The following example illustrates an instance where the points counted by the covers for which $\Delta(\pi_{v})=\Delta(\pi)$ can form a non-trivial cothin set in $\PP^1(\QQ)$.

\begin{example}
Consider the fibration $\pi \colon X \to \PP^1$ with three double fibres over $0$, $-1$ and $\infty$, together with  precisely one other non-split fibre over $1$, which has multiplicity one and  is split by a quadratic extension $K/\QQ$.
Let $C_{v}$ be the conic 
$$
v_1x_1^2+v_2x_2^2=x_0^2
$$ 
in $\mathbb P^2$ defined by $v=(v_1,v_2) \in \QQ^\times/\QQ^{\times,2}\times \QQ^\times/\QQ^{\times,2}$. We apply the partition in part (b) of Proposition \ref{prop:properties}
 with the full family of twists
\[
\theta_v: C_{v} \to \PP^1, \quad [x_0\colon x_1 \colon x_2] \mapsto [v_1x_0^2\colon v_2x_1^2].
\]
This is the finest partition in the sense of 
Remark \ref{rem:cap}, since we have $\pi_1^{\text{orb}}(\PP^1,\partial_\pi) = \ZZ/2\ZZ \times \ZZ/2\ZZ$, and any  $\theta_{v}$ is geometrically a universal orbifold cover.
Consider the fibres $\theta_{v}^{-1}(1)$ as $v$ varies, which on algebras are biquadratic \'etale $\QQ$-algebras $\prod_i N^{(i)}_{1,v}$. Infinitely many of these contain the splitting field $K$ of the fibre and for such $v$ we have
\[
1-\delta_{1,\mathbb Q} < 1  = 1-\delta_{1,N_{1,v}^{(\alpha)}}(\pi) < \sum_i \left(1-\delta_{1,N_{1,v}^{(i)}}(\pi)\right),
\]
where $\alpha$ is such that $K \subseteq N^{(\alpha)}_{1,v}$.
However, each of these infinitely many $(\ZZ/2\ZZ \times \ZZ/2\ZZ)$-covers factors through only two $\ZZ/2\ZZ$-covers. Hence, the set of points counted through the $v$ for which
\[
1-\delta_{1,\mathbb Q} \ne \sum_i \left(1-\delta_{1,N_{1,v}^{(i)}}(\pi)\right),
\]
is a thin set.
In the case of a non-trivial Galois action on the components of the multiple fibres, we will need to deal with them in a similar manner to conclude that the points counted in the covers $\theta_v$ for $\Delta(\pi_v) \ne \Delta(\pi)$ form a thin set.
\end{example}


\section{A sparsity criterion}\label{s:sparsity}

Let $k$ be a number field. Let $X$ and $Y$ be smooth, proper varieties over $k$, and let $D$ and $E$ be strict normal crossings divisors on $X$ and $Y$ respectively, where $f^{-1}(E) \subseteq D$. Assume that the induced morphism $f: (X, D) \to (Y,E)$ is a \emph{toroidal} morphism; i.e., a toroidal morphism between toroidal embeddings, or equivalently, a log smooth morphism of (Zariski) log regular schemes. Fix $Q \in Y(k)$. We want to understand when $f^{-1}(Q)$ is everywhere locally soluble.

Let $S$ be a finite set of places including all {\em places of bad reduction for $f$}. This means that we have a good model $\overline{f}: (\mathcal{X}, \mathcal{D}) \to (\mathcal{Y},\mathcal{E})$ for $f$ over $\mathcal{O}_{k,S}$ with the property that $\overline{f}^{-1}(\mathcal{E}) \subseteq \mathcal{D}$, such that $(\mathcal{X}, \mathcal{D})$ and $(\mathcal{Y},\mathcal{E})$ are still log regular, and such that $\overline{f}$ is still log smooth with respect to the divisorial log structures induced by $\mathcal{D}$ and $\mathcal{E}$.

Let $v \not\in S$ be a finite place of $k$. Let $\mathcal{Q}_v \in \mathcal{Y}(\mathcal{O}_v)$ be the unique lift of $Q \in Y(k)$ to an $\mathcal{O}_v$-point. We will give necessary and sufficient conditions for the existence of an $\mathcal{O}_v$-point $\mathcal{P}_v$ on $\mathcal{X}$ such that $f(\mathcal{P})_v = \mathcal{Q}_v$, for $v \gg 0$.

If $\mathcal Q_v \not \subseteq \mathcal E$, then the $\mathcal{O}_v$-point $\mathcal{Q}_v$ can be seen as a morphism $$\mathcal{Q}_v \colon (\mathrm{Spec}\,\mathcal{O}_v)^\dagger \to (\mathcal{Y},\mathcal{E})$$ of log schemes, where $(\mathrm{Spec}\,\mathcal{O}_v)^\dagger$ is the scheme $\mathrm{Spec}\,\mathcal{O}_v$ equipped with the divisorial log structure induced by the closed point. This morphism induces a morphism of associated \emph{Kato fans} $$F(\mathcal{Q}_v) \colon \mathrm{Spec}\,\mathbb{N} \cong F((\mathrm{Spec}\,\mathcal{O}_v)^\dagger) \to F(\mathcal{Y},\mathcal{E}).$$ 
In other words, we get an $\mathbb{N}$-valued point $F(\mathcal{Q}_v) \in F(\mathcal{Y},\mathcal{E})(\mathbb{N})$.

 If $\mathcal{Q}_v$ is the image of $\mathcal{P}_v \in \mathcal{X}(\mathcal{O}_v)$, then clearly $F(\mathcal{Q}_v)$ cannot lie anywhere in $F(\mathcal{Y},\mathcal{E})(\mathbb{N})$; it needs to be an element of the potentially smaller set 
 $$
 \mathrm{image}\left(F(\mathcal{X},\mathcal{D})(\mathbb{N}) \to F(\mathcal{Y},\mathcal{E})(\mathbb{N})\right).
 $$ This means that if $F(\mathcal{Q}_v)$ does \emph{not} lie in the image of $F(\mathcal{X},\mathcal{D})(\mathbb{N})$, then surely $\mathcal{Q}_v$ cannot lift to a $\mathcal{O}_v$-point on $\mathcal{X}$. This is a {\em sparsity criterion} in the sense of \cite[\S 2]{LS}, but still a rather na\"ive one, since it does not take important arithmetic information into account.
 
\begin{defin}
Let $\bar{\mathcal P}_v$ be an $\mathbb F_p$-point on $\mathcal X$. With the notation above, we define $F(\mathcal{X},\mathcal{D})(\mathbb{N})_{\bar{\mathcal P}_v}$ as the subset of $F(\mathcal{X},\mathcal{D})(\mathbb{N})$ with the property that $\bar{\mathcal{P}}_v$ lies is the logarithmic stratum associated to the image of the closed point $\mathbb{N}_{>0}$ of $\mathrm{Spec}\,\mathbb{N}$.
\end{defin} 

\begin{prop}
With notation as above, let $\bar{\mathcal P}_v$ be an $\mathbb F_p$-point on $\mathcal X_{\mathcal Q_v}$ and assume that $F(\mathcal{Q}_v)$ does \emph{not} lie in 
$$
\mathrm{image}\left(
F(\mathcal{X},\mathcal{D})(\mathbb{N})_{\bar{\mathcal P}_v} \to F(\mathcal{Y},\mathcal{E})(\mathbb{N})\right).
$$ 
Then $\bar{\mathcal P}_v \in \mathcal X_{\mathcal Q_v}(\mathbb F_p)$ does not lift to $\mathcal{P}_v \in \mathcal{X}_{\mathcal Q_v}(\mathcal{O}_v)$.
\end{prop}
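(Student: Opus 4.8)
The plan is to argue by contrapositive via functoriality of the Kato fan construction, refining the naive sparsity criterion recalled just above by tracking through which logarithmic stratum the reduction $\bar{\mathcal P}_v$ passes. Suppose for contradiction that $\bar{\mathcal P}_v$ does lift to $\mathcal P_v \in \mathcal X_{\mathcal Q_v}(\mathcal O_v)$. Since $\mathcal P_v$ is an $\mathcal O_v$-point on $\mathcal X$ lying over $\mathcal Q_v$, and since $\mathcal Q_v \not\subseteq \mathcal E$ so that $(\operatorname{Spec}\mathcal O_v)^\dagger$ carries its divisorial log structure from the closed point, the point $\mathcal P_v$ defines a morphism of log schemes $\mathcal P_v \colon (\operatorname{Spec}\mathcal O_v)^\dagger \to (\mathcal X, \mathcal D)$ compatible with $\mathcal Q_v \colon (\operatorname{Spec}\mathcal O_v)^\dagger \to (\mathcal Y,\mathcal E)$. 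This is automatic: the preimage $\overline f^{-1}(\mathcal E) \subseteq \mathcal D$ and $\mathcal Q_v$ meets $\mathcal E$ only at the closed point, so $\mathcal P_v^{-1}(\mathcal D)$ is supported on the closed fibre and the log structure it pulls back from $\mathcal D$ is the divisorial one on $(\operatorname{Spec}\mathcal O_v)^\dagger$.

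Applying the Kato fan functor $F(-)$ to the triangle $(\operatorname{Spec}\mathcal O_v)^\dagger \to (\mathcal X,\mathcal D) \to (\mathcal Y,\mathcal E)$ yields a commutative diagram
\[
\begin{array}{ccc}
\operatorname{Spec}\mathbb N & \to & F(\mathcal X,\mathcal D)\\
\| & & \downarrow\\
\operatorname{Spec}\mathbb N & \to & F(\mathcal Y,\mathcal E),
\end{array}
\]
so that $F(\mathcal Q_v) \in F(\mathcal Y,\mathcal E)(\mathbb N)$ is the image of the $\mathbb N$-point $F(\mathcal P_v) \in F(\mathcal X,\mathcal D)(\mathbb N)$. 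The point is then to check that $F(\mathcal P_v)$ actually lands in the \emph{refined} subset $F(\mathcal X,\mathcal D)(\mathbb N)_{\bar{\mathcal P}_v}$. This is where the extra arithmetic input enters: the image of the closed point $\mathbb N_{>0} \subset \operatorname{Spec}\mathbb N$ under $F(\mathcal P_v)$ records precisely which logarithmic stratum of $(\mathcal X,\mathcal D)$ contains the reduction of $\mathcal P_v$ — but that reduction is by hypothesis $\bar{\mathcal P}_v$. So $F(\mathcal P_v) \in F(\mathcal X,\mathcal D)(\mathbb N)_{\bar{\mathcal P}_v}$ by the very definition of that subset, and therefore $F(\mathcal Q_v)$ lies in the image of $F(\mathcal X,\mathcal D)(\mathbb N)_{\bar{\mathcal P}_v} \to F(\mathcal Y,\mathcal E)(\mathbb N)$, contradicting the hypothesis.

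The main obstacle is the middle step: verifying that the closed point of $\operatorname{Spec}\mathbb N \cong F((\operatorname{Spec}\mathcal O_v)^\dagger)$ maps, under $F(\mathcal P_v)$, to the cone (equivalently, the logarithmic stratum) cut out by $\bar{\mathcal P}_v$. Concretely this amounts to unwinding how the Kato fan of a log regular scheme stratifies it — the points of $F(\mathcal X,\mathcal D)$ correspond to the log strata, and a morphism from $(\operatorname{Spec}\mathcal O_v)^\dagger$ sends the closed point to the stratum through which the closed point of $\operatorname{Spec}\mathcal O_v$ passes — and then observing that this closed point is exactly $\bar{\mathcal P}_v$ since $\mathcal P_v$ reduces to it. One must be a little careful that the characteristic monoid at $\bar{\mathcal P}_v$ is the one governing the local picture, and that no collapsing happens because $\mathcal Q_v \not\subseteq \mathcal E$; the log smoothness of $\overline f$ and log regularity of both sides ensure the relevant monoid maps are well-behaved. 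Everything else is formal functoriality of $F(-)$ together with the definition of $F(\mathcal X,\mathcal D)(\mathbb N)_{\bar{\mathcal P}_v}$.
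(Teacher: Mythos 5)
Your proof is correct and follows essentially the same route as the paper's: assume the lift $\mathcal P_v$ exists, apply functoriality of the Kato fan to the triangle $(\operatorname{Spec}\mathcal O_v)^\dagger \to (\mathcal X,\mathcal D) \to (\mathcal Y,\mathcal E)$, and observe that $F(\mathcal P_v)$ lands in $F(\mathcal X,\mathcal D)(\mathbb N)_{\bar{\mathcal P}_v}$ by definition since the closed point reduces to $\bar{\mathcal P}_v$. The paper states this in two sentences; you spell out the same argument with the intermediate checks (that $\mathcal P_v$ is automatically a log morphism, that the closed point of $\operatorname{Spec}\mathbb N$ tracks the stratum of $\bar{\mathcal P}_v$) made explicit, which is fine.
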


\begin{proof}
Assume that $\bar{\mathcal P}_v$ lifts, i.e., $\mathcal{Q}_v = \overline{f}(\mathcal{P}_v)$ for some $\mathcal{P}_v \in \mathcal{X}(\mathcal{O}_v)$ with $\bar{\mathcal P}_v = \mathcal{P}_v \mod{v}$ (which is the image of $\mathrm{Spec}\,\mathbb{F}_v$ under $\mathcal{P}_v$).  Therefore the image of $F(\mathcal{P}_v) \in F(\mathcal{X},\mathcal{D})(\mathbb{N})$ under the map $F(\mathcal{X},\mathcal{D})(\mathbb{N}) \to F(\mathcal{Y},\mathcal{E})(\mathbb{N})$ comes from $F(\mathcal{X},\mathcal{D})(\mathbb{N})_{\bar{\mathcal P}_v}$, as desired. \end{proof}
 
In fact, the logarithmic Hensel lemma \cite[Proposition 5.13]{LSS} yields more, as in the following result.

\begin{prop}\label{prop:loghensel}
If $\bar{\mathcal P}_v$ is an $\mathbb F_v$-point on $\mathcal X_{\mathcal Q_v}$, the following are equivalent:
\begin{itemize}
\item[(a)] $\bar{\mathcal P}_v$ lifts to an $\mathcal{O}_v$-point on $\mathcal{X}_{\mathcal Q_v}$;
\item[(b)] $F(\mathcal{Q}_v) \in \mathrm{image}\left(F(\mathcal{X},\mathcal{D})(\mathbb{N})_{\bar{\mathcal P}_v} \to F(\mathcal{Y},\mathcal{E})(\mathbb{N})\right)$.
\end{itemize}
\end{prop}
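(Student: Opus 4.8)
The implication (a)$\,\Rightarrow\,$(b) is exactly the contrapositive of the preceding proposition, so the content lies in the reverse implication (b)$\,\Rightarrow\,$(a). The plan is to reinterpret hypothesis (b) as the combinatorial datum needed to run the logarithmic Hensel lemma \cite[Proposition 5.13]{LSS}, and then quote that result.

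First I would unwind (b). An element of $F(\mathcal{X},\mathcal{D})(\mathbb{N})_{\overline{\mathcal P}_v}$ which maps to $F(\mathcal{Q}_v)$ is, by the definition of the Kato fan, a morphism $\mathrm{Spec}\,\mathbb{N} \to F(\mathcal{X},\mathcal{D})$ sending the closed point $\mathbb{N}_{>0}$ into the cone attached to the logarithmic stratum through $\overline{\mathcal P}_v$, and becoming $F(\mathcal{Q}_v)\colon \mathrm{Spec}\,\mathbb{N} \to F(\mathcal{Y},\mathcal{E})$ after composition with $F(\overline f)$. Translating back from fans to log structures, this is precisely the data of a lift, along $\overline f$, of the log structure of $(\mathrm{Spec}\,\mathcal O_v)^\dagger$, centred at the closed point $\overline{\mathcal P}_v$ and compatible with $\mathcal Q_v$; equivalently, it says that the purely combinatorial (monoidal) obstruction to promoting $\overline{\mathcal P}_v \in \mathcal X_{\mathcal Q_v}(\mathbb F_v)$ to an $\mathcal O_v$-point vanishes. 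The multiplicities recorded in $F(\mathcal Q_v)$ — the valuations of the pullbacks of the local equations of $\mathcal E$ — must then match those read off from $\mathcal D$ along the chosen lift, which is what guarantees that the resulting map genuinely targets $(\mathcal Y,\mathcal E)$ and not some modification of it.

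Next I would feed this into \cite[Proposition 5.13]{LSS}. By the choice of $S$, the model $\overline f\colon (\mathcal{X},\mathcal{D}) \to (\mathcal{Y},\mathcal{E})$ is log smooth over $\mathcal O_{k,S}$; since $v \notin S$ it is log smooth in a neighbourhood of $\overline{\mathcal P}_v$. The logarithmic Hensel lemma then asserts that the $\mathbb F_v$-point $\overline{\mathcal P}_v$ on the special fibre, together with the lift of log-structure data produced in the previous paragraph, deforms to a section $\mathcal P_v \in \mathcal X(\mathcal O_v)$ with $\overline f(\mathcal P_v) = \mathcal Q_v$ and $\mathcal P_v \bmod v = \overline{\mathcal P}_v$; such a $\mathcal P_v$ is automatically an $\mathcal O_v$-point of the fibre $\mathcal X_{\mathcal Q_v}$, which is assertion (a). When $\mathcal Q_v \not\subseteq \mathcal E$ carries the trivial log structure this degenerates to the classical smooth Hensel lemma, so the two cases can be handled uniformly.

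The main obstacle is the bookkeeping in the first step: one has to check that $F(\mathcal{X},\mathcal{D})(\mathbb{N})_{\overline{\mathcal P}_v}$ really parametrises exactly those log-structure lifts along $\overline f$ that are ``centred at $\overline{\mathcal P}_v$'', so that an element of it lifting $F(\mathcal Q_v)$ is literally the hypothesis of \cite[Proposition 5.13]{LSS} and that no hidden transversality or positivity condition is being glossed over. Once this dictionary between the Kato-fan language used here and the log-deformation-theoretic input of \cite{LSS} is in place, the cited Hensel lemma supplies the lift with no further work.
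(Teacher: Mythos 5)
Your proposal is correct and follows essentially the same route as the paper: note that (a)$\Rightarrow$(b) is the preceding proposition, then for (b)$\Rightarrow$(a) turn the Kato-fan datum into a log morphism $s^\dagger \to (\mathcal X,\mathcal D)$ over $\mathcal Q_v$ and invoke the logarithmic Hensel lemma \cite[Proposition~5.13]{LSS}. The ``bookkeeping'' step you flag is precisely what the paper handles by constructing, following \cite[Proposition~6.1]{LSS}, a unique enhancement of the $\mathbb F_v$-point to a morphism $u\colon s^\dagger\to(\mathcal X,\mathcal D)$ with $F(u)=p_v$ (note it is the rank-one log point $s^\dagger=\mathrm{Spec}\,\mathbb F_v$ one maps in, not $(\mathrm{Spec}\,\mathcal O_v)^\dagger$ as your phrasing momentarily suggests), after which the Hensel lemma supplies the lift to $S^\dagger$.
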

 
\begin{proof}
Since we have already shown that (a) implies (b), it remains to prove the reverse implication. This is an application of \cite[Proposition 5.13]{LSS}.  Indeed, let $s^\dagger = \mathrm{Spec}\,\mathbb{F}_v$ with the standard log structure of rank $1$, and $S^\dagger = \mathrm{Spec}\,\mathcal{O}_v$. Let $j: s^\dagger \to S^\dagger$ be the canonical closed immersion.

By assumption there is an element $p_v \in F(\mathcal{X},\mathcal{D})(\mathbb{N})_{\bar{\mathcal P}_v}$ which maps to $F(\mathcal{Q}_v) \in F(\mathcal{Y},\mathcal{E})(\mathbb{N})$, and there is an $\mathbb F_v$-point $u \colon \mathrm{Spec}\ \mathbb F_v \to X$ on the associated stratum of $(\mathcal X, \mathcal D)$. We can uniquely make $u$ into a morphism of log schemes $s^\dagger \to (\mathcal X, \mathcal D)$ such that $F(u) = p_v$ under the identification $F(\mathbb N) = F(s^\dagger)$, similar to the proof of Proposition~6.1 in \cite{LSS}.

Since $F(f)$ maps $F(u)$ to $F(\mathcal Q_v)$ we have a commutative diagram:

\begin{center}
\begin{tikzpicture}[auto]
\node (L1) {$s^\dagger$};
\node (L2) [below= 0.9cm of L1] {$S^\dagger$};
\node (M1) [right= 0.9cm  of L1] {$(\mathcal{X},\mathcal{D})$};
\node (M2) at (M1 |- L2) {$(\mathcal{Y},\mathcal{E})$};
\draw[->] (L1) to node {\footnotesize $u$} (M1);
\draw[->] (L2) to node [below] {\footnotesize $\mathcal{Q}_v$} (M2);
\draw[->] (L1) to node [left] {\footnotesize $j$}  (L2);
\draw[->] (M1) to node {\footnotesize $\overline{f}$} (M2);
\end{tikzpicture}
 \end{center}
Now \cite[Proposition 5.13]{LSS} provides a lift $S^\dagger \to (\mathcal{X},\mathcal{D})$ of $\mathcal{Q}_v$. The morphism of schemes which underlies this lift is the $\mathcal{O}_v$-point $\mathcal{P}_v$ we are looking for.
\end{proof} 

Using this statement we can give precise conditions for locally solubility. We allow ourselves to work over a general number field $k/\QQ$ and so define a {\em standard fibration} to be a 
 a dominant morphism $\pi:X\to \PP^1$  with geometrically integral generic fibre, such that $X$ is  
 a smooth, proper, geometrically irreducible $k$-variety.

Let   $E$ be the reduced divisor of $\mathbb P^1$ of the non-split fibres of $\pi$. Let $D$ be the reduced  divisor underlying $\pi^{-1} (E)$. By embedded resolutions of singularities, there exists a birational morphism $X' \to X$ such that the pullback $D'$ of $D$ has strict normal crossings. Since $X\setminus D \cong X' \setminus D'$ over $\mathbb P^1$ we see that $N_{\text{loc}}(\pi',B)$ differs by a constant from $N_{\text{loc}}(\pi,B)$, where $\pi' \colon X' \to X \to Y$ is the composition. 
 Thus, for the purposes of upper and lower bounds, we  can assume without loss of generality that the reduced subschemes of the non-split fibres of $\pi$ have strict normal crossings.

\begin{thm}\label{thm:lifting}
Let $X \to \mathbb P^1$ be a standard fibration whose non-split fibres in their reduced subscheme structure are sncd. There exists a finite set of primes $S$ and  a model $\mathcal X \to \mathbb P^1_{\mathcal O_S}$ such that the following holds for $v \not\in S$.  Fix a point $\mathcal Q \in \mathbb P^1(\mathcal O_S)$ for which the fibre $X_Q$ is split. Then any $\mathbb F_v$-point $\bar{\mathcal P}_v \in \mathcal X_{\mathcal Q}(\mathbb F_v)$  lifts to a point $\mathcal P_v \in \mathcal X_{\mathcal Q}(\mathcal O_v)$ precisely if
for every closed point $V(h) \in (\mathbb P^1)^{(1)}$ we have that $v(h(\mathcal Q))$ lies in the positive linear span of the multiplicities $m_i$ of the components of $\mathcal X_{V(h)}$ that contain $\bar{\mathcal P}_v$.
\end{thm}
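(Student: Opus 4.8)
The plan is to recognise this criterion as a special case of the logarithmic Hensel lemma, Proposition~\ref{prop:loghensel}, and then to compute the relevant Kato fans by hand. First I would set up the log structures: equip $\mathbb P^1$ with the divisorial log structure attached to $E$, the reduced divisor of non-split fibres, and $X$ with the divisorial log structure attached to $D=\pi^{-1}(E)_{\mathrm{red}}$, which is sncd by hypothesis. Near a closed point on the fibre over a non-split closed point $V(h)$, write $D=\sum_i m_i D_i$ locally and let $x_i$ be a local equation of $D_i$; then $\pi^{\ast}h=u\prod_i x_i^{m_i}$ for some unit $u$, and since the $x_i$ extend to a regular system of parameters, the morphism $\pi\colon(X,D)\to(\mathbb P^1,E)$ is log smooth (toroidal) once we discard the finitely many primes dividing the greatest common divisors of the fibral multiplicities along maximal strata, as demanded by Kato's log-smoothness criterion. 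Away from $E$ the log structures are trivial, so there the morphism is log smooth exactly where it is smooth; the finitely many split but singular or multiple fibres cause no difficulty, either because one enlarges $D$ to the reduced preimage of the whole degenerate locus, or because Hensel's lemma together with the Lang--Weil estimates provides a lift of some $\mathbb F_v$-point for $v\gg0$, the stated criterion being automatic over such a fibre since it carries a geometrically integral component of multiplicity one. Spreading out then yields a finite set $S$ of primes---which we also enlarge to contain the primes of bad reduction for $\pi$, the primes modulo which the branches of $\mathcal E$ collide or $\mathcal X$ or $\mathcal D$ degenerate, and the primes modulo which some component of $\mathcal E$ ceases to be cut out by $h$---together with a good model $\bar\pi\colon(\mathcal X,\mathcal D)\to(\mathbb P^1_{\mathcal O_S},\mathcal E)$ of the kind described at the beginning of this section.

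Next I would invoke Proposition~\ref{prop:loghensel}. Fix $v\notin S$ and $\mathcal Q\in\mathbb P^1(\mathcal O_S)$ with $X_Q$ split; then $Q\notin E$, hence $\mathcal Q_v\not\subseteq\mathcal E$ and $F(\mathcal Q_v)\in F(\mathbb P^1,\mathcal E)(\mathbb N)$ is defined. By Proposition~\ref{prop:loghensel}, an $\mathbb F_v$-point $\bar{\mathcal P}_v\in\mathcal X_{\mathcal Q}(\mathbb F_v)$ lifts to $\mathcal X_{\mathcal Q}(\mathcal O_v)$ if and only if $F(\mathcal Q_v)$ lies in the image of $F(\mathcal X,\mathcal D)(\mathbb N)_{\bar{\mathcal P}_v}\to F(\mathbb P^1,\mathcal E)(\mathbb N)$, so it suffices to make both sides explicit. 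Let $\bar Q_v\in\mathbb P^1(\mathbb F_v)$ be the reduction of $\mathcal Q$. If $\bar Q_v\notin\mathcal E$, then $F(\mathcal Q_v)=0$, the point $\bar{\mathcal P}_v$ meets no fibral component of $\mathcal D$, and both the lifting and the stated criterion hold trivially. Otherwise, since the branches of $\mathcal E$ are disjoint modulo $v\notin S$, the point $\bar Q_v$ lies on the closure of a unique non-split $V(h)$, which is cut out by $h$ in $\mathbb P^1_{\mathcal O_S}$; pulling back along $\mathcal Q_v$ sends the generator of the characteristic monoid $\overline{\mathcal M}\cong\mathbb N$ of $\mathcal E$ at $\bar Q_v$ to $h(\mathcal Q)\in\mathcal O_v$, of valuation $n:=v(h(\mathcal Q))$, so that in the local chart $\mathrm{Spec}\,\mathbb N$ of $F(\mathbb P^1,\mathcal E)$ around $\bar Q_v$ the element $F(\mathcal Q_v)$ is the integer $n$. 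On the source side, let $I$ index the components of $\mathcal X_{V(h)}$ that contain $\bar{\mathcal P}_v$, with multiplicities $m_i$; around $\bar{\mathcal P}_v$ the fan $F(\mathcal X,\mathcal D)$ has local chart $\mathrm{Spec}\,\mathbb N^{I}$, and $F(\mathcal X,\mathcal D)(\mathbb N)_{\bar{\mathcal P}_v}$ consists exactly of the tuples $(a_i)_{i\in I}$ with every $a_i\geq1$---this being precisely the requirement that the closed point $\mathbb N_{>0}$ of $\mathrm{Spec}\,\mathbb N$ map to the logarithmic stratum through $\bar{\mathcal P}_v$. From $\pi^{\ast}h=u\prod_i x_i^{m_i}$ the induced map on local fans $\mathrm{Spec}\,\mathbb N^{I}\to\mathrm{Spec}\,\mathbb N$ carries $(a_i)_{i\in I}$ to $\sum_{i\in I}m_ia_i$, the unit $u$ being invisible to the Kato fan. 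Hence $\bar{\mathcal P}_v$ lifts if and only if $n=\sum_{i\in I}m_ia_i$ for some integers $a_i\geq1$, i.e. if and only if $v(h(\mathcal Q))$ lies in the positive linear span of $\{m_i:i\in I\}$; for every other closed point $V(h')$ one has $v(h'(\mathcal Q))=0$ and $\bar{\mathcal P}_v$ on no component, so the condition is vacuous there. This is exactly the assertion.

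The analytic heart, namely logarithmic Hensel (Proposition~\ref{prop:loghensel}, which rests on \cite{LSS}), is already available, so what remains is geometric bookkeeping; I expect the main obstacle to be the first step, that is, checking that $\pi\colon(X,D)\to(\mathbb P^1,E)$ really is toroidal and admits a good integral model---this is what forces the precise list of primes thrown into $S$---and then matching the Kato fan of $(\mathcal X,\mathcal D)$ along $V(h)$ with the naive combinatorial datum ``multiplicities of the components passing through $\bar{\mathcal P}_v$'': one must confirm that $\pi^{\ast}h$ has the monomial shape $u\prod_i x_i^{m_i}$, that $F(\mathcal X,\mathcal D)(\mathbb N)_{\bar{\mathcal P}_v}$ is the interior of the cone $\mathrm{Spec}\,\mathbb N^{I}$, and that the induced fan map is the linear form $(a_i)\mapsto\sum_i m_ia_i$. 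The rest is the direct calculation outlined above.
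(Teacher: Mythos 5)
Your proposal is correct and follows the same route as the paper: establish log smoothness of $(X,D)\to(\mathbb P^1,E)$, spread out to a good integral model over $\mathcal O_S$, invoke the logarithmic Hensel lemma (Proposition~\ref{prop:loghensel}), and identify the Kato fan map in local charts $\mathbb N^r\to\mathbb N$ as $(a_i)\mapsto\sum_i m_i a_i$ with $F(\mathcal Q_v)=v(h(\mathcal Q))$. You supply more explicit justification for the log smoothness step and the description of $F(\mathcal X,\mathcal D)(\mathbb N)_{\bar{\mathcal P}_v}$ as the interior cone, but the argument is essentially identical to the one in the paper.
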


Note that the last condition is trivially satisfied for all closed points $V(h)$ for which $v(h(\mathcal Q))=0$, and also for those for which if $X_{V(h)}$ is split. By restricting $S$ further, we can assume that there is at most one non-split fibre $X_{V(h)}$ for which we have to check this condition.

\begin{proof}[Proof of Theorem \ref{thm:lifting}]
By the definition of $D$ on $X$ and  $E$ on  $\mathbb P^1$, 
we see that $(X,D) \to (\mathbb P^1,E)$ is log smooth. For a suitable finite set of primes $S$ this extends to $\mathcal O_S$-schemes and divisors, such that $\mathcal D \subseteq \mathcal X$ and $\mathcal E \subseteq \mathbb P^1_{\mathcal O_S}$ still have strict normal crossings and $(\mathcal X,\mathcal D) \to (\mathbb P^1_{\mathcal O_S},\mathcal E)$ is also log smooth. We will check that this model satisfies the condition.

Consider $\bar{\mathcal P}_v \in \mathcal X_{\mathcal Q}(\mathbb F_v)$ and let $V(h) \subset \mathbb P^1_{\mathcal O_S}$ be the unique non-split fibre containing $\bar{\mathcal Q}_v = \pi(\bar{\mathcal P}_v)$. Suppose that we can write $v(h(\mathcal Q)) = \sum_i a_i m_i$ with $a_i > 0$ integers, and $m_i$ the multiplicities of the $r$ components of $X_{v(h)}$ which contain $\bar{\mathcal P}_v$. Around $\bar{\mathcal P}_v$ and $\bar{\mathcal Q}_v$ the Kato fans have affine charts $\mathbb N^r$ and $\mathbb N$. Under this identification we have $F(\mathcal Q_v) = v(h(\mathcal Q)) \in \mathbb N$ and $F(\mathcal{X},\mathcal{D})(\mathbb{N}) \to F(\mathbb P^1_{\mathcal O_S},\mathcal{E})(\mathbb{N})$ is given by $(u_i) \mapsto \sum m_iu_i$. Hence the result follows from Proposition~\ref{prop:loghensel}.
\end{proof}

\begin{rem}
In \cite[\S~2]{LS} the following was proven: if $v(h(\mathcal Q))=1$ then $\mathcal X_{\mathcal Q}$ is a regular scheme. This implies that any $\mathbb F_v$-point on $\mathcal X_{\mathcal Q}$ which lies on the intersection of at least two components of the reduction $\mathcal X_{\mathcal Q,v}$ does not lift to a $\mathbb Q_v$-point on $\mathcal X_{\mathcal Q}$. This last statement directly follows from our criterion above, since then the valuation $v(h(\mathcal Q))=1$ cannot possibly lie in the positive linear span of two positive integers.
\end{rem}

The above conditions make it easy to check if an $\mathbb F_v$-point lifts. However, one cannot deduce the existence of $\mathbb F_v$-points purely from valuations and multiplicities, as explained by Loughran and Matthiesen \cite[Lemma 6.2]{LM}. In general, this only allows us to give necessary conditions for local solubility.

\begin{cor}\label{cor:sparsity}
Let $X \to \mathbb P^1$ be a standard fibration and let $Q\in \PP^1(k)$. 
Suppose that  $X_Q(k_v) \ne \emptyset$
for $v \not \in S$.
Then for every closed point $D=V(h) \in (\mathbb P^1)^{(1)}$, we have either $v(h(Q)) > m_D$,  or else $v(h(Q)) = m_D$ and $v$ belongs to
\[
T_D =\left\{v\not\in S: \text{$\frob_v$ fixes an element of $S_{D}$} \right\}.
\]
(Recall that  $S_D$ is the set of geometric components of $X_D$ of minimum multiplicity $m_D$.)
\end{cor}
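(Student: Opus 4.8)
The plan is to deduce the statement from Theorem~\ref{thm:lifting}, using only the implication (proved before Proposition~\ref{prop:loghensel}) that the existence of a lift forces the linear-span condition, and then to convert the resulting geometric fact --- that the reduction of the lift lies on a \emph{single}, multiplicity-$m_D$ geometric component of the non-split fibre --- into the Frobenius statement via the specialisation of geometric components already exploited in \eqref{eq:new-delta} and Theorem~\ref{t:densities}. Concretely, I would first reduce to the hypotheses of Theorem~\ref{thm:lifting}: after the blow-up discussed before that theorem the reduced non-split fibres are sncd, which changes neither $N_{\text{loc}}$ nor the pairs $(m_D,S_D)$, and we fix the model $\mathcal{X}\to\PP^1_{\mathcal{O}_S}$. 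Enlarging $S$, I would also arrange that $\mathcal{X}$ is proper over $\mathcal{O}_S$, so that for $v\notin S$ one has $X_Q(k_v)\neq\emptyset$ precisely when some $\bar{\mathcal{P}}_v\in\mathcal{X}_{\mathcal{Q}}(\mathbb{F}_v)$ lifts to $\mathcal{X}_{\mathcal{Q}}(\mathcal{O}_v)$, where $\mathcal{Q}\in\PP^1(\mathcal{O}_v)$ is the unique extension of $Q$; that for each $v\notin S$ at most one non-split fibre meets $\bar{\mathcal{Q}}_v$; and that, for $v\notin S$, the geometric components of $\pi^{-1}(D)$ of any fixed multiplicity specialise bijectively and $\frob_v$-equivariantly onto those of the reduction of $\pi^{-1}(D)$ modulo a prime of $\kappa(D)$ above $v$ --- this last point being exactly the compatibility behind \eqref{eq:new-delta} and Section~\ref{s:grouptheoretic}. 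We may assume $X_Q$ is split, the finitely many exceptional $Q$ (degree-one points of the non-split locus) being disposed of by a further enlargement of $S$ after which only split fibres $V(h)$ can satisfy $v(h(Q))>0$, for which the assertion is immediate since a split fibre carries a geometrically integral multiplicity-one component. Finally fix $D=V(h)$; if $v(h(Q))=0$ there is nothing to prove and if $v(h(Q))>m_D$ the first alternative holds, so assume $1\le v(h(Q))\le m_D$.

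The numerical step is short. Let $\bar{\mathcal{P}}_v$ lift to $\mathcal{X}_{\mathcal{Q}}(\mathcal{O}_v)$. Since $v(h(Q))\ge 1$, the reduction $\bar{\mathcal{Q}}_v$ lies on $V(h)$, so $\bar{\mathcal{P}}_v$ lies in the non-split fibre over $D$ and hence on some $r\ge 1$ of its geometric components, say of multiplicities $m_1,\dots,m_r\ge m_D$. By Theorem~\ref{thm:lifting} we may write $v(h(Q))=\sum_{i=1}^r a_i m_i$ with every $a_i\ge 1$, whence $v(h(Q))\ge m_1+\dots+m_r\ge r m_D$; combined with $v(h(Q))\le m_D$ this forces $r=1$, $a_1=1$ and $m_1=m_D$. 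Thus $\bar{\mathcal{P}}_v$ is a smooth point of the reduced non-split fibre lying on a unique geometric component $\widetilde{C}$, which has multiplicity $m_D$; in particular $v(h(Q))=m_D$, the first part of the conclusion.

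For the Frobenius step, note that $\bar{\mathcal{Q}}_v\in V(h)(\mathbb{F}_v)$ reduces into a point of $V(h)\bmod v$ of residue degree one, i.e.\ into a prime $\mathfrak{p}$ of $\kappa(D)$ above $v$ with residue degree one; fixing a prime above $\mathfrak{p}$ in a finite Galois extension $L$ over which the elements of $S_D$ are defined gives a decomposition group in which $\frob_v$ is represented. Since $\bar{\mathcal{P}}_v$ is $\mathbb{F}_v$-rational and $\widetilde{C}$ is the only geometric component through it, $\frob_v$ fixes $\bar{\mathcal{P}}_v$ and hence stabilises $\widetilde{C}$. By the equivariant specialisation arranged above, $\widetilde{C}$ is the reduction of a geometric component $s\in S_D$ of $\pi^{-1}(D)$ (of multiplicity $m_D$), and the identification is $\frob_v$-equivariant, so $\frob_v$ fixes $s$. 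As having a fixed point on $S_D$ is a conjugacy-invariant property of $\frob_v$, this gives $v\in T_D$, completing the proof.

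The step I expect to be the main obstacle is the $\frob_v$-equivariant specialisation in the last paragraph: matching the geometric components of $\pi^{-1}(D)$ over $\bar{\mathbb{Q}}$ with those of its reduction over $\bar{\mathbb{F}}_v$, while keeping careful track of the residue-degree-one prime $\mathfrak{p}$ (which is what makes $\frob_v$ act on the components of $\pi^{-1}(D)$ in the first place) and of the difference between components defined over $\kappa(D)$ and geometric components. This is the same compatibility already used implicitly throughout Section~\ref{s:grouptheoretic}, so it can be cited rather than reproved; the reduction to Theorem~\ref{thm:lifting} and the numerical step are routine.
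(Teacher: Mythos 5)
Your proof is correct and carries out the natural deduction from Theorem~\ref{thm:lifting}: the positive-linear-span condition forces $v(h(Q))\geq m_D$ and, in the boundary case $v(h(Q))=m_D$, forces $\bar{\mathcal P}_v$ onto a unique component of multiplicity $m_D$, whose Galois-invariance then gives $v\in T_D$. The paper leaves this corollary without an explicit proof, but your two-step structure (numerical inequality from the Kato-fan criterion, then Frobenius-invariance via equivariant specialisation) is exactly the intended argument; the reduction to the sncd model and to split $Q$ is stated a little more elaborately than necessary but is sound.
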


In the special case that the non-split fibres all lie about $k$-rational points in $\PP^1$, we can (after possibly extending the set $S$ again) make this even more  precise, as follows. 

\begin{cor}\label{cor:onlysplitoverdegree1}
Let $X \to \mathbb P^1$ be a standard fibration and let $Q\in \PP^1(k)$. 
Assume that the non-split fibres of $X \to \mathbb P^1$ all lie above $k$-rational points. Then $X_Q(k_v) \ne \emptyset$ precisely if for every $V(h) \in (\mathbb P^1)^{(1)}$ the fibre $X_{V(h)}$ has intersecting geometric components of multiplicity $m_i$ which are fixed by $\frob_v$, such that $v(h(Q))$ lies in the positive linear span of the $m_i$.
\end{cor}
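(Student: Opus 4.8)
The plan is to reduce the statement to Theorem~\ref{thm:lifting} via the valuative criterion of properness, and then to convert the resulting condition on $\mathbb F_v$-points of a special fibre into the stated condition on Frobenius-fixed components by means of the Lang--Weil estimates. First I would enlarge the finite set $S$ furnished by Theorem~\ref{thm:lifting} so that, for every $v\notin S$: each of the finitely many non-split fibres lies over a point $V(h)$, with $h$ linear, extending to a section of $\mathbb P^1_{\mathcal O_S}$ with good reduction at $v$; the geometric components of every non-split fibre are defined over a number field in which $v$ is unramified, so that a well-defined $\frob_v$ acts on $S_{V(h)}$ compatibly, under reduction, with the $\Gal(\bar{\mathbb F}_v/\mathbb F_v)$-action on the components of $\mathcal X_{V(h)}\otimes\mathbb F_v$, preserving multiplicities and incidences; and $S$ contains the primes dividing the resultants of pairs of distinct non-split $h$'s, so that for $v\notin S$ and any $Q\in\mathbb P^1(k)$ the point $\bar{\mathcal Q}_v=Q\bmod v$ meets the reduction of at most one non-split fibre (cf.\ the remark after Theorem~\ref{thm:lifting}). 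Now fix $v\notin S$ and $Q\in\mathbb P^1(k)$, and let $\mathcal Q\in\mathbb P^1(\mathcal O_v)$ be the unique lift of $Q$. Since $\mathcal X_{\mathcal Q}$ is proper over $\mathcal O_v$ one has $X_Q(k_v)=\mathcal X_{\mathcal Q}(k_v)=\mathcal X_{\mathcal Q}(\mathcal O_v)$, and every $\mathcal O_v$-point of $\mathcal X_{\mathcal Q}$ reduces to an $\mathbb F_v$-point, so $X_Q(k_v)\neq\emptyset$ if and only if some $\bar{\mathcal P}_v\in\mathcal X_{\mathcal Q}(\mathbb F_v)$ lifts to $\mathcal X_{\mathcal Q}(\mathcal O_v)$. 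By Theorem~\ref{thm:lifting}, this occurs precisely when, for the unique non-split $V(h)$ with $\bar{\mathcal Q}_v$ on its reduction, the integer $v(h(Q))$ lies in the positive linear span of the multiplicities of the components of $\mathcal X_{V(h)}$ passing through $\bar{\mathcal P}_v$. (If there is no such $V(h)$, or if it is split, then $\mathcal X_{\mathcal Q}$ contains a geometrically integral $\mathbb F_v$-subvariety in its special fibre --- using that $X_Q$, or the split fibre, contains a geometrically integral component of multiplicity one defined over the base --- so an $\mathbb F_v$-point exists by Lang--Weil and lifts, and the asserted condition holds, vacuously or via that component.)

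For the ``only if'' direction I would argue as follows. For $v\notin S$, a geometric component $C$ of $X_{V(h)}$ reduces to a geometrically irreducible divisor over $\mathbb F_v$ exactly when $\frob_v$ fixes $C$, and the multiplicities are constant along $\frob_v$-orbits. If $\bar{\mathcal P}_v\in\mathcal X_{\mathcal Q}(\mathbb F_v)$ lifts, then, being $\mathbb F_v$-rational, it lies on a $\frob_v$-stable set of geometric components of $\mathcal X_{V(h)}\otimes\bar{\mathbb F}_v$, hence on a union of $\frob_v$-orbits, and a point lying on one member of an orbit lies on all of them. Thus the positive span of the multiplicities of all components through $\bar{\mathcal P}_v$ is spanned by a $\frob_v$-stable family of intersecting components --- each orbit contributing its common multiplicity --- which realises $v(h(Q))$, and this is precisely the stated condition.

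The reverse implication is the crux. As Loughran and Matthiesen observe in \cite[Lemma~6.2]{LM}, one cannot in general infer the existence of an $\mathbb F_v$-point of a fibre from valuations and multiplicities alone; here one is rescued by the hypothesis that the non-split fibres lie over $k$-rational points, which removes any residual arithmetic obstruction in the base. Suppose $X_{V(h)}$ possesses $\frob_v$-fixed geometric components $C_{i_1},\dots,C_{i_\ell}$ with a common point, and $v(h(Q))=\sum_j a_j m_{i_j}$ with $a_j\in\mathbb Z_{\ge 0}$. Their reductions $\bar C_{i_j}$ are geometrically irreducible $\mathbb F_v$-subvarieties of $\mathcal X_{V(h)}\otimes\mathbb F_v$, and their common locus $\bar Z$ is a non-empty $\mathbb F_v$-scheme, being the good reduction of the non-empty common locus of the $C_{i_j}$. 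If $\dim\bar Z\ge1$, the Lang--Weil bound produces a point of $\bar Z(\mathbb F_v)$ for $v\notin S$; if $\dim\bar Z=0$, then the sncd hypothesis bounds $\ell$, and one checks directly (after a further enlargement of $S$ if needed) that either a suitable $\mathbb F_v$-rational point of $\bar Z$, or else a general $\mathbb F_v$-point of one of the $\bar C_{i_j}$ when $m_{i_j}\mid v(h(Q))$, can be chosen. In either case one obtains $\bar{\mathcal P}_v\in\mathcal X_{\mathcal Q}(\mathbb F_v)$ lying exactly on $\bar C_{i_1},\dots,\bar C_{i_\ell}$; Theorem~\ref{thm:lifting} then shows it lifts, so $X_Q(k_v)\neq\emptyset$. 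I expect the main difficulty to lie in these last Lang--Weil steps --- locating an $\mathbb F_v$-point inside a prescribed logarithmic stratum --- and correspondingly in pinning down a choice of $S$ that guarantees good reduction of the components, of their incidences, and of the Frobenius action, and that tames the zero-dimensional intersections occurring in the sncd case.
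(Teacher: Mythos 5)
Your overall strategy coincides with the paper's: reduce to $\mathbb F_v$-points via the valuative criterion, invoke Theorem~\ref{thm:lifting} for the lifting criterion, and use Lang--Weil to manufacture $\mathbb F_v$-points on the relevant log strata. The ``if'' direction matches the paper's argument closely, and your concern about zero-dimensional strata is unnecessary: the paper treats all geometric components $W$ of all strata uniformly --- after enlarging $S$ the closure $\mathcal W$ has geometrically irreducible fibres, so it has $\mathbb F_{v'}$-points for $v'\mid v$; when $\dim W=0$ this is immediate since $\mathcal W$ mod $v'$ is just $\Spec \mathbb F_{v'}$, and when $\dim W\ge 1$ one uses Lang--Weil. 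There is no separate case analysis to worry about.

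The genuine gap is in your ``only if'' direction. You correctly observe that the set of geometric components of $\mathcal X_{V(h)}\otimes\bar{\mathbb F}_v$ passing through the $\mathbb F_v$-rational point $\bar{\mathcal P}_v$ is $\frob_v$-stable, hence a union of orbits; but the statement requires components that are individually \emph{fixed} by $\frob_v$, not merely a $\frob_v$-stable family. Your phrase ``each orbit contributing its common multiplicity'' does not deliver this: an orbit $\{C_1,\dots,C_r\}$ of multiplicity $m$ contributes $(a_1+\cdots+a_r)m$ with all $a_j>0$ to the positive span, and it is unclear how to replace this by a single $\frob_v$-fixed component of multiplicity $m$. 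The resolution, which neither your proof nor the paper's spells out, is that these orbits are automatically singletons in the Zariski-log/sncd setting: each irreducible component of $\mathcal D\otimes\mathbb F_v$ is a \emph{smooth} $\mathbb F_v$-curve, so its geometric components are pairwise disjoint, and an $\mathbb F_v$-rational point on it lies on exactly one of them. Since that geometric component is the unique one through $\bar{\mathcal P}_v$ in its $\mathbb F_v$-component, it must be sent to itself by $\frob_v$. You should add this observation to close the argument; with it, your blind proof is essentially the paper's.
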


\begin{proof}
We will start with $S$ and $\mathcal X \to \mathbb P^1_{\mathcal O_S}$ as above. By the results above we have that $P_v \in X_Q(k_v)$ reduces to an $\mathbb F_v$-point on $\mathcal X$. Since this $\mathbb F_v$-points lifts we get the result.

For the inverse implication we will need to enlarge $S$, as follows. Firstly we do so to assume that all fibres of $\mathcal X \setminus \mathcal D \to \mathbb P^1_{\mathcal O_S} \setminus \mathcal E$ are geometrically integral. Hence by Lang--Weil we find a smooth $\mathbb F_v$-point all those fibres except for finitely many $v$.
Now let $W$ be a geometric component of a non-open stratum of $(X,D)$, which is defined over $k'/k$. The closure $\mathcal W$ of $W$ will have geometrically irreducible fibres over all but finitely many places of $k'$. Hence after enlarging $S$ we see that $\mathcal W$ has an $\mathbb F_{v'}$-point for all $v' \mid v$, for $v \not \in S$. Since there are only finitely many strata and each has again finitely many components we can enlarge $S$ to make this true for all possible $W$.

Suppose now that $\frob_v$ fixes the components of $D$ which define the stratum containing $W$. Then for any $v' \mid v$ we see that $W$ contains an $\mathbb F_{v'}=\mathbb F_v$-point. We can lift this point under the conditions in Theorem~\ref{thm:lifting}.
\end{proof}

\section{Multiple fibres via the large sieve}\label{s:large}

We place ourselves in the setting of   Theorems \ref{t:1} and \ref{t:2}. 
Let $\pi \colon X \to \PP^1$
be a standard fibration with orbifold  divisor 
$$
\partial_\pi = \left(1-\frac1{m_0}\right)[0] + \left(1-\frac1{m_\infty}\right)[\infty],
$$ 
in the notation of \eqref{eq:def_partial},
for $m_0,m_\infty\in \NN$.  Note that $2-\deg\partial_\pi=\frac{1}{m_0}+\frac{1}{m_\infty}$.
We define  $d=\gcd(m_0,m_\infty)$.
We shall apply the theory from 
Section \ref{sec:orb} to the family of $\mu_d$-torsors 
$$
\theta_{v} \colon \PP^1 \to \PP^1, \quad [x_0\colon x_1] \to [v_0x_0^d \colon v_1 x_1^d],
$$
which are parametrised by $v = v_1/v_0 \in \mathbb Q^\times/\QQ^{\times,d}=\text{H}^1(\Gal(\bar\QQ/\QQ),\mu_d)$. 
Let 
$\pi_{v} \colon X_{v} \to \PP^1$ 
be the  normalisation of the pullback of  $\pi$ along $\theta_{v}$.

The main result of this section is the following, which pertains to the density of locally soluble fibres on the standard fibration $\pi_v:X_v\to \PP^1$,
relative to the pullback height  $H_v$ along $\theta_v$.
 We denote by $\rad(n)=\prod_{p\mid n}p$, the square-free radical of any  $n\in \NN$.

\begin{prop}\label{prop:2fibres}
Let  $\ve >0$ and let
$v = v_1/v_0 \in \mathbb Q^\times/\QQ^{\times,d}$.
Then 
$$
 N_{\text{loc}}(\pi_{v},H_v,B) \ll_{\ve}
 c_{v,\ve} B^{\frac{1}{m_0}+\frac{1}{m_\infty}},
 $$
 where
\begin{equation}\label{eq:cv}
c_{v,\ve} =\frac{|v_0v_1|^\ve}{\rad(v_0)|v_0|^{1/m_0}\rad(v_1)|v_1|^{1/m_\infty}}.
\end{equation}
Furthermore, if  $|v_0v_1|\leq B^\ve$, then 
\begin{align*}
 N_{\text{loc}}(\pi_{v},H_v, B) 
 \ll_{\ve} 
 c_{v,\ve} \frac{B^{\frac{1}{m_0}+\frac{1}{m_\infty}}}{(\log B)^{\Theta_v(\pi)}},
\end{align*}
where $\Theta_v(\pi)$ is given by \eqref{eq:theta-v}.
\end{prop}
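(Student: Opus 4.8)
The plan is to combine the sparsity criterion of Corollary~\ref{cor:sparsity} with a large sieve argument, the key point being that the (generically multiple) fibres of $\pi_v$ over $0$ and $\infty$ force the integer coordinates of a point of $\PP^1(\QQ)$ to be power-full, so that the box one ultimately sieves over is of size roughly $B^{1/m_0}\times B^{1/m_\infty}$ rather than the naive $B^{1/d}\times B^{1/d}$. After replacing $v=v_1/v_0$ by a representative free of $d$-th powers with $\gcd(v_0,v_1)=1$, a point $y\in\PP^1(\QQ)$ with $H_v(y)\leq B$ corresponds, up to sign and lower-order terms, to a coprime pair $(y_0,y_1)\in\ZZp^2$ with $|y_0|\leq(B/|v_0|)^{1/d}$ and $|y_1|\leq(B/|v_1|)^{1/d}$, since $\theta_v(y)=[v_0y_0^d:v_1y_1^d]$ is then in lowest terms and has height $\max\{|v_0||y_0|^d,|v_1||y_1|^d\}$. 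Moreover, away from the finitely many $y$ in the ramification locus of $\theta_v$, the fibre of $\pi_v$ over $y$ is isomorphic to the fibre of $\pi$ over $\theta_v(y)$, so up to $O(1)$ the quantity $N_{\text{loc}}(\pi_v,H_v,B)$ counts such pairs $(y_0,y_1)$ for which $X_{\theta_v(y)}(\A_\QQ)\neq\emptyset$.

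The next step is to feed $\mathcal Q=\theta_v(y)$ into Corollary~\ref{cor:sparsity} (equivalently Theorem~\ref{thm:lifting}), applied to $\pi$ with a fixed finite set of primes $S$ independent of $v$, and to run through the closed points $D\in(\PP^1)^{(1)}$. At $D=0$, since every component of the fibre of $\pi$ over $0$ has multiplicity $\geq m_0$, for $p\notin S$ with $p\mid v_0y_0$ the integer $v_p(v_0)+d\,v_p(y_0)-v_p(v_1)-d\,v_p(y_1)$ must lie in the positive span of these multiplicities, and in particular be $\geq m_0$: for $p\nmid v_0v_1$ this forces $(m_0/d)\mid v_p(y_0)$, with $v_p(y_0)=m_0/d$ forcing $p\in T_0$, while for $p\mid v_0$, since $1\leq v_p(v_0)<d\leq m_0$, it forces $p\mid y_0$ or $p\mid y_1$. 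Hence $y_0$ is an $(m_0/d)$-full integer times a factor supported on $S$ and on the primes of $v_0v_1$, and $\rad(v_0)\mid y_0y_1$; symmetrically at $\infty$ for $y_1$, $m_\infty$, $T_\infty$, with $\rad(v_1)\mid y_0y_1$. Finally, at a non-split fibre $D=V(g_D)$ with $D\neq0,\infty$, one writes $g_D(v_0y_0^d,v_1y_1^d)=\prod_k h_D^{(k)}(y_0,y_1)$, the factorisation into irreducible binary forms corresponding to the decomposition \eqref{eq:wood} of $N_{D,d,v}$; applying the criterion at the corresponding closed points $V(h_D^{(k)})$ of $\PP^1$, whose residue fields are $N_{D,d,v}^{(k)}$, forces every $p\notin S$ dividing $h_D^{(k)}(y_0,y_1)$ to exact first power to lie in a Frobenian set of density $\delta_{D,N_{D,d,v}^{(k)}}(\pi)$, consistently with Corollary~\ref{c:goat}.

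For the first bound one discards all the Frobenian conditions: factoring $\rad(v_0)=r_0r_0'$ and $\rad(v_1)=r_1r_1'$ according to which part divides $y_0$ and which divides $y_1$ (there are $\ll|v_0v_1|^\ve$ such choices), and using that the number of $(m/d)$-full integers up to $Y$ divisible by a fixed squarefree $q$ is $\ll Y^{d/m}/q$, one obtains $N_{\text{loc}}(\pi_v,H_v,B)\ll_\ve c_{v,\ve}B^{1/m_0+1/m_\infty}$ with $c_{v,\ve}$ as in \eqref{eq:cv}. For the refined bound one assumes $|v_0v_1|\leq B^\ve$, so that every $v$-dependent factor is $B^{o(1)}$, and peels off the higher-order part of each coordinate $y_i$ with $m_i>d$: one writes $y_0=b_0A_0^{m_0/d}C_0$ with $b_0$ supported on $S$ and on the primes of $v_0v_1$, $A_0$ squarefree with all of its prime factors in $T_0$, and $C_0$ being $(m_0/d+1)$-full, and symmetrically for $y_1$. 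Since $\sum_{C_i}C_i^{-d/m_i}$ converges and $\sum_{b_i}b_i^{-d/m_i}=B^{o(1)}$, this reduces us to a sieve problem for pairs $(A_0,A_1)$ ranging over a box of size $\approx B^{1/m_0}\times B^{1/m_\infty}$ in which $A_0$, resp.\ $A_1$, is supported on $T_0$, resp.\ $T_\infty$, and in which every prime factor of $h_D^{(k)}(b_0A_0^{m_0/d}C_0,\,b_1A_1^{m_\infty/d}C_1)$, outside a sparse square part, lies in its Frobenian set. Running the large sieve over squarefree moduli, in the manner of \cite{LS} --- removing, for each sieving prime $p$, the class $A_0\equiv0$ when $p\notin T_0$, the class $A_1\equiv0$ when $p\notin T_\infty$, and the $O(p)$ pairs modulo $p$ on which $p$ divides the relevant $h_D^{(k)}$ when $p\notin\mathcal P_{D,k}$ --- and evaluating the aggregate of the local densities by means of the group-theoretic computations of Section~\ref{s:grouptheoretic} (Lemma~\ref{l:fixpoints} and Theorem~\ref{t:densities}), the sieve saves a factor
\[
(\log B)^{-(1-\delta_0(\pi))-(1-\delta_\infty(\pi))-\sum_{D\neq0,\infty}\sum_k\bigl(1-\delta_{D,N_{D,d,v}^{(k)}}(\pi)\bigr)}=(\log B)^{-\Theta_v(\pi)}
\]
by \eqref{eq:theta-v} (equivalently $(\log B)^{-\Delta(\pi_v)}$, by Corollary~\ref{c:goat}). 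Combined with the box count $\ll c_{v,\ve}B^{1/m_0+1/m_\infty}$ of the first bound, this yields the second estimate.

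The main difficulty is the large sieve step. One has to check that the irreducible factors $h_D^{(k)}(y_0,y_1)$ of $g_D(v_0y_0^d,v_1y_1^d)$ are separable, and that after the substitution $y_i=b_iA_i^{m_i/d}C_i$ their vanishing loci modulo $p$ have the expected number of $\mathbb F_p$-points, so that the local sieve densities are $\asymp1/p$ with exactly the right multiplicities and their aggregate is precisely $\Theta_v(\pi)$ --- without spurious contributions coming from the $(m_i/d)$-th power maps. One also has to verify that the sums over the $(m_i/d+1)$-full remainders $C_i$ and over the square parts of the $h_D^{(k)}$-values converge, and that all implied constants remain uniform in $v$ over the range $|v_0v_1|\leq B^\ve$; feeding the resulting estimate into Proposition~\ref{prop:properties}(b), with the first bound used for $|v_0v_1|>B^\ve$ and the second for $|v_0v_1|\leq B^\ve$, then gives Theorem~\ref{t:2}.
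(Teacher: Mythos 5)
Your proposal follows essentially the same route as the paper: apply the sparsity criterion (Corollary~\ref{cor:sparsity}) to reduce to counting coprime pairs with power-full structure in the coordinates, decompose each $y_i$ into an $S$-part, a squarefree part supported on $T_0$ or $T_\infty$, and an $(m_i/d+1)$-full remainder, and then run a large sieve over the squarefree parts, with the group-theoretic computations of Section~\ref{s:grouptheoretic} turning the sieve saving into $(\log B)^{-\Theta_v(\pi)}$. Your decomposition $y_0=b_0A_0^{m_0/d}C_0$ is a relabelling of the paper's $y_0=b_0s_0^{m_0/d}t_0^{m_0/d}u_0$ (your single $b_0$ absorbs the paper's $b_0$ and $s_0$), and the two constructions are equivalent.

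You correctly flag the two genuine difficulties, but stop short of resolving them, and both are where the paper does its real work. First, the sparsity condition on the non-split fibres over closed points $E_j \ne 0,\infty$ is about $p\,\|\,h_D^{(k)}(\mathbf{y})$ (exact divisibility), not $p\mid h_D^{(k)}(\mathbf{y})$: removing the class ``$p\mid h_D^{(k)}$'' from the sieve over-removes, since $\Omega$ does contain points with $p^2\mid h_D^{(k)}$ and $p\notin\mathcal P_{D,k}$. The paper handles this cleanly by running the large sieve modulo $p^2$ (Lemma~\ref{large} with $m=2$) and computing the density $\overline\omega(p)$ of the genuinely forbidden residues, including the $O(p^2)$ correction for the $p^2\mid g_j$ locus (Lemma~\ref{lem:exp-omega}). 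Your phrase ``outside a sparse square part'' gestures at this but does not constitute an argument. Second, after substituting $y_i=b_iA_i^{m_i/d}C_i$, the relevant local density for the non-split fibres is $\lambda_j(p;A_0,A_1)=\#\{\mathbf{t}\in\mathbb F_p^2:h_j(A_0t_0^{m_0},A_1t_1^{m_\infty})=0\}$, and one must show this equals $\nu_j(p;\v)=\#\{\mathbf{t}\in\mathbb F_p^2:h_j(v_0t_0^d,v_1t_1^d)=0\}+O(1)$, so that the aggregate exponent is exactly $\Theta_v(\pi)$ and not something distorted by the $(m_i/d)$-th power maps. You note this must be ``checked,'' but it is not obvious; the paper's proof of \eqref{eq:blue_bottle} in Lemma~\ref{lem:exp-omega} uses a somewhat delicate averaging over cosets of $(\mathbb F_p^\times)^{m_0}$ and $(\mathbb F_p^\times)^{m_\infty}$ inside $(\mathbb F_p^\times)^d$, exploiting the homogeneity of $h_j$. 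If you were to write your argument in full, you would be led to reprove precisely these two lemmas.
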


We shall begin the proof of this result in Section \ref{s:prelim-large}. Our argument is based on the large sieve, which is  recalled in Section
 \ref{s:large'}.  Taking the result on faith for the moment, we proceed to show how it can be used to establish 
Theorems~\ref{t:1} and \ref{t:2}.

\begin{rem}\label{rem:zwiebel}
Proposition 
\ref{prop:2fibres} is consistent with Conjecture \ref{con2} for a fixed choice of
$v \in \mathbb Q^\times/\QQ^{\times,d}$.
Indeed, we have $H_v(x)=H(x)^d$ where $H(x)$ is an $\mathcal O(1)$-height on $\mathbb P^1$.
It follows that
$$
 N_{\text{loc}}(\pi_{v},B) =
 N_{\text{loc}}(\pi_{v},H,B) =
  N_{\text{loc}}(\pi_{v},H_v, B^d) 
 \ll_{v}  \frac{B^{\frac{d}{m_0}+\frac{d}{m_\infty}}}{(\log B)^{\Theta_v(\pi)}}.
$$
The   orbifold base of $\pi_v$ is $(X_v,\partial_{\pi_{v}})$,
with 
$$
\partial_{\pi_{v}} = \left(1-\frac d{m_0}\right)[0] + \left(1-\frac d{m_\infty}\right)[\infty],
$$
by part (b) of 
Proposition \ref{prop:thin}. It  follows from \eqref{eq:**} 
and part (c) of Proposition~\ref{prop:thin} that 
that $\frac{d}{m_0}+\frac{d}{m_\infty}=-d \deg K_{\pi,\partial_\pi}=2-\deg \partial_{\pi_v}$.
Moreover, 
$\Theta_v(\pi)=\Delta(\pi_{v})$, by part (c) of Proposition \ref{prop:properties}.
\end{rem}

\begin{proof}[Proof of Theorem \ref{t:1}]
In this case there is only one multiple fibre above $0$ and so $m_\infty=1$ and  $d=1$.
Thus $\text{H}^1(\Gal(\bar\QQ/\QQ),\mu_d)$ is the trivial group
and it follows directly from 
Proposition 
\ref{prop:2fibres} that
$$
 N_{\text{loc}}(\pi,B) 
 \ll \frac{B^{\frac{1}{m_0}+1}}{(\log B)^{\Theta_1(\pi)}}.
$$
We have already seen that $\frac{1}{m_0}+1=2-\deg \partial_\pi$. Moreover, we saw that $\Theta_1(\pi)=\Delta(\pi)$ in \eqref{eq:check}. 
\end{proof}

\begin{proof}[Proof of Theorem \ref{t:2}]
We appeal to the  decomposition in part (b) of Proposition~\ref{prop:properties}. 
This gives 
$$
 N_{\text{loc}}(\pi,B) \ll \sum_{
 v = v_1/v_0 \in \mathbb Q^\times/\QQ^{\times,d}} N_{\text{loc}}(\pi_v,H_v,B).
$$
For any $\delta>0$ we clearly have 
\begin{align*}
\sum_{n>x} \frac{1}{\rad(n)n^{\delta} }<
\sum_{n=1}^\infty \frac{(n/x)^{\delta/2}}{\rad(n)n^{\delta} }
&=
x^{-\delta/2}\prod_{p} \left(1+\sum_{k=1}^\infty \frac{1}{p^{1+k\delta/2}}\right)\\
&\ll_\delta x^{-\delta/2}.
\end{align*}
Let $\ve>0$. 
In the light of the latter bound, it follows from the first part of Proposition \ref{prop:2fibres}
that there exists $\delta(\ve)>0$, such that the terms with $|v_0v_1|>B^\ve$ make an overall contribution $O_{\ve}(B^{1/m_0+1/m_{\infty}-\delta(\ve)})$ to 
$ N_{\text{loc}}(\pi,B)$. For the terms with 
$|v_0v_1|\leq B^\ve$, we apply the second part of 
Proposition \ref{prop:2fibres}. 

This easily leads to the conclusion that 
$$
 N_{\text{loc}}(\pi,B) \ll_{\ve} 
 B^{\frac{1}{m_0}+\frac{1}{m_\infty}-\delta(\ve)}+
 \sum_{|v_0v_1|\leq B^\ve}
 c_{v,\ve} \frac{B^{\frac{1}{m_0}+\frac{1}{m_\infty}}}{(\log B)^{\Theta_v(\pi)}}
 \ll_{\ve}
  \frac{B^{\frac{1}{m_0}+\frac{1}{m_\infty}}}{(\log B)^{\Theta(\pi)}},
 $$
where $\Theta(\pi)$ is given by \eqref{eq:put}.
The  statement of the theorem  follows, since we have already remarked that 
 $\frac{1}{m_0}+\frac{1}{m_\infty}=2-\deg\partial_\pi$.
\end{proof}

\subsection{The large sieve}\label{s:large'}

We begin by stating the version of the large sieve that we shall use in this paper. 

\begin{lemma}\label{large}
Let $m\in \NN$, let $B_0,B_1\geq 1$ and let $\Omega\subset \ZZ^{2}$. For each prime $p$ assume that there exists $\overline{\omega}(p)\in [0,1)$ such that the reduction modulo $p^m$ of $\Omega$ has cardinality at most
$(1-\overline{\omega}(p))p^{2m}$. Then 
$$
\#\left\{\x\in \Omega: |x_i|\leq B_i, \text{ for $i=0,1$}\right\}\ll
\frac{ (B_0 +Q^{2m})(B_1 +Q^{2m})}{L(Q)},
$$
for any $Q\geq 1$,
where
$$
L(Q)=\sum_{q\leq Q} \mu^2(q) \prod_{p\mid q} \frac{\overline{\omega}(p)}{1-\overline{\omega}(p)}.
$$
\end{lemma}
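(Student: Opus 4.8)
The plan is to deduce this from a standard arithmetic large sieve inequality for the box $\{|x_i| \le B_i\}$, applied modulo the squarefree moduli $q^m$ rather than $q$. First I would recall the classical large sieve: for a set of integer points in a box, if one removes a proportion $\omega(q)$ of residues modulo each squarefree $q$, then the number of surviving points is $\ll (B_0 + Q^2)(B_1 + Q^2)/\sum_{q\le Q}\omega(q)/(1-\omega(q))$ or the analogous multidimensional version. Concretely, I would invoke the multiplicative large sieve inequality of Montgomery (see e.g. \cite{serre} or Montgomery's book) in the form: for $\x$ ranging over a box $\prod_i[-B_i,B_i]$ in $\ZZ^2$ and a set $\Omega$ whose reduction mod $r$ omits at least $\overline\omega(p)r^2/\,$(appropriate factors) for each prime power dividing a squarefree $r$, one has the stated bound with $Q^2$ replaced by $R^2$ where $R$ is the range of moduli.

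Next, the key device: apply the inequality not to prime moduli $p$ but to the moduli $p^m$. Set $r = q^m$ for squarefree $q \le Q$; then $r$ ranges over $m$-th powers of squarefrees up to $Q^m$, and $r \le Q^m$, so the error term $R^2$ becomes $(Q^m)^2 = Q^{2m}$, matching the statement. For each such $r = q^m = \prod_{p\mid q} p^m$, the hypothesis is that the reduction of $\Omega$ mod $p^m$ has at most $(1-\overline\omega(p))p^{2m}$ elements; by the Chinese remainder theorem the reduction mod $q^m$ has at most $\prod_{p\mid q}(1-\overline\omega(p))p^{2m}$ elements, i.e. the omitted proportion $\delta(q^m)$ satisfies $1 - \delta(q^m) \le \prod_{p\mid q}(1-\overline\omega(p))$, hence $\delta(q^m) \ge 1 - \prod_{p\mid q}(1-\overline\omega(p))$. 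Feeding this into the large sieve weight, the relevant sum is bounded below by $\sum_{q\le Q}\mu^2(q)\prod_{p\mid q}\frac{\overline\omega(p)}{1-\overline\omega(p)} = L(Q)$, using the elementary identity (valid since $\overline\omega(p) < 1$) that $\frac{\delta(q^m)}{1-\delta(q^m)} \ge \prod_{p\mid q}\frac{\overline\omega(p)}{1-\overline\omega(p)}$, which follows by induction on the number of prime factors from $\frac{1-\prod(1-a_i)}{\prod(1-a_i)} = \prod\frac{1}{1-a_i} - 1 \ge \prod\left(1+\frac{a_i}{1-a_i}\right) - 1 \ge \sum\frac{a_i}{1-a_i} \ge \prod\frac{a_i}{1-a_i}$ when each $a_i \in [0,1)$; more simply one just needs $\prod(1+b_i) \ge 1 + \prod b_i$ for $b_i \ge 0$... wait, that is false, so the correct elementary step is $\prod(1+b_i)-1 \ge \prod b_i$, which does hold since expanding the product gives $\prod b_i$ as one of the nonnegative terms. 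Thus the denominator in the large sieve dominates $L(Q)$, and we obtain the claimed bound.

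The main obstacle is purely bookkeeping: making sure the precise normalisation of the classical large sieve (the factor $Q^2$ vs.\ $Q^2 + 1$, the two-variable box vs.\ one-variable, the exact definition of the sieve weight in terms of a multiplicative function on squarefrees) matches what is needed, and that substituting $r = q^m$ is legitimate — the large sieve inequality holds for an arbitrary set of squarefree moduli, and $\{q^m : q \text{ squarefree}\}$ is such a set (as a set of integers; one uses it as the collection of moduli, which need not themselves be squarefree). Once the elementary inequality relating $\delta(q^m)/(1-\delta(q^m))$ to $\prod_{p\mid q}\overline\omega(p)/(1-\overline\omega(p))$ is in hand, and the ambient large sieve inequality is cited in the right form, the rest is immediate. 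No deep input is required beyond the classical large sieve; I expect the write-up to be short, essentially one application of a cited inequality plus the CRT computation above.
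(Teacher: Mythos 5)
The paper proves this lemma in one sentence: it cites Kowalski's multidimensional large sieve \cite[Thm.~4.1]{large} for $m=1$ and declares the extension to $m>1$ routine. Your plan attempts to reconstruct this from the ``classical'' one-variable large sieve by the substitution $q \mapsto q^m$, and this is where it goes wrong.

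The form of the arithmetic large sieve you invoke — bounding the count by $\Delta / \sum_{q} \delta(q)/(1-\delta(q))$, where $\delta(q)$ is the proportion of residues mod $q$ avoided — is not actually a theorem. What is true (Montgomery's lemma) is the bound $\Delta/L(Q)$ where $L(Q) = \sum_{q\leq Q}\mu^2(q)\prod_{p\mid q}\omega(p)/(p-\omega(p))$, i.e.\ with a \emph{product over primes} in the summand, and since your elementary inequality $\prod(1+b_i)-1 \geq \prod b_i$ shows precisely that $\delta(q)/(1-\delta(q)) \geq \prod_{p\mid q}\omega(p)/(p-\omega(p))$, the density-ratio form you state would be a \emph{strengthening} of Montgomery, not a consequence of it. It is false in general: take $m=1$, $q=6$, and $\mathcal A$ the integers coprime to $6$ in $[1,N]$. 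Then $\sum_{a\,(6),\,\gcd(a,6)=1}|S(a/6)|^2 \sim N^2/18$ while $\frac{\delta(6)}{1-\delta(6)}|\mathcal A|^2 \sim 2N^2/9$. So your elementary CRT step is both in the wrong direction to repair the argument and unneeded in a correct proof (where the weight is already the product).

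The genuine subtlety you are skipping is the Montgomery-lemma input for prime-power moduli. The single-modulus inequality you would need, namely $\sum_{a\,(p^m),\,\gcd(a,p^m)=1}|S(a/p^m)|^2 \gg \frac{\overline\omega(p)}{1-\overline\omega(p)}|\mathcal A|^2$, fails: for $\mathcal A = p\ZZ \cap [1,N]$, $m=2$ and $N$ large the left side is $\asymp pN$ while the right side is $\asymp N^2/p$. What \emph{is} true is the inequality for the aggregated quantity $\sum_{j=1}^m \sum_{a\,(p^j),\,\gcd(a,p^j)=1}|S(a/p^j)|^2 \geq \frac{\overline\omega(p)}{1-\overline\omega(p)}|\mathcal A|^2$, because this equals $p^m \sum_h Z(h)^2 - |\mathcal A|^2$ and Cauchy--Schwarz applies. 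Kowalski's framework is set up precisely so that the ``level $p$'' object is this aggregate (a nontrivial character of $(\ZZ/p^m\ZZ)^2$ rather than a primitive Farey fraction with denominator exactly $p^m$), and then the inductive multiplicativity argument goes through verbatim with $\ZZ/p\ZZ$ replaced by $(\ZZ/p^m\ZZ)^2$; the corresponding set of Farey fractions has denominators up to $Q^m$, giving $Q^{2m}$. That is the ``routine extension'' the paper alludes to. Restricting, as you propose, to the sparse set of moduli $\{q^m\colon q\text{ squarefree}\}$ throws away the intermediate levels $p^j$, $j<m$, which carry the bulk of the sieve information, and the argument collapses. Finally, your citation to \cite{serre} (Serre's ``Lectures on $N_X(p)$'') is not a reference for the large sieve; the relevant source is Kowalski's book \cite{large}, which the paper cites.
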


\begin{proof}
When $m=1$ this is a straightforward rephrasing of the multidimensional large sieve worked out by Kowalski \cite[Thm.~4.1]{large}. The extension to $m>1$ is routine and will not be explained  here. 
\end{proof}

\subsection{Preliminary steps}\label{s:prelim-large}

Recall that $d=\gcd(m_0,m_\infty)$.
Henceforth, we usually  write 
$\v=(v_0,v_1)\in \ZZp^2$ for the point 
$v = v_1/v_0 \in \mathbb Q^\times/\QQ^{\times,d}$. 
We may clearly  proceed under the assumption that 
$v_0,v_1$ are both free of $d$th powers.

Let $S$ be a large enough finite set of primes, 
as required for the arguments in Section \ref{s:sparsity} to go through. 
Suppose that $E_1,\dots,E_r\in (\PP^1)^{(1)}$ are the closed points distinct from $0$ and $\infty$, 
where $\pi_v$ is not smooth. For each $1\leq j\leq r$, 
assume that $E_j=V(h_j)$ for a square-free binary form $h_j\in \ZZ_S[x_0,x_1]$.
We may  further assume that 
 $h_j$ is irreducible over $\QQ$ and 
 coprime to the monomial $x_0x_1$, and that 
  the coefficients of $h_j$ are relatively coprime.

We proceed by  defining  the sets
\begin{align*}
T_0&=\left\{p\not\in S: \text{$\frob_p$ fixes an element of $S_{0}$} \right\},\\
T_\infty&=\left\{p\not\in S: \text{$\frob_p$ fixes an element of $S_{\infty}$} \right\}, \\
U_j&=\left\{p\not\in S: \text{$\frob_p$ fixes an element of $S_{E_j}$} \right\},
\end{align*}
for $1\leq j\leq r$.
The fibre $X_{v,y}$ of the fibration $\pi_{v} \colon X_{v} \to \mathbb P^1$ has a $\mathbb Q_p$-point precisely if $X_{\pi_{v}(y)}$ does and thus  we can apply the sparsity conditions Corollary~\ref{cor:sparsity}. This yields the upper bound
$ N_{\text{loc}}(\pi_{v},B) \leq
 M_{\mathbf{v}}(B)$,
where   $M_{\mathbf{v}}(B)$ is defined to be the number of 
$\mathbf y =(y_0,y_1)\in \ZZ^2$ such that 
$\gcd(v_0y_0,v_1y_1)=1$ and 
$\max\{|v_0y_0^d|,|v_1y_1^d|\}\leq B$,
with 
$$ 
p\not \in S \Rightarrow 
\begin{cases}
\left(v_p(x_0) = m_0 \text{ and } p \in T_0\right), \text{ or } v_p(x_0) > m_0, \\ \left(v_p(x_1) = m_\infty \text{ and } p \in T_\infty\right), \text{ or } v_p(x_1) > m_\infty, \\ p \| h_j(\mathbf x) \Rightarrow p \in U_j,
\end{cases}
$$
where $(x_0,x_1)=(v_0y_0^d,v_1y_1^d)$. 
 Write 
$v_0=a_{0}w_0'$ 
and
 $y_0=b_0z_0$, where 
 $w_0'z_0$ is coprime to all the primes in $S$ and 
 $p\mid a_0b_0 \Rightarrow p\in S$.
 Let $p\not \in S$. 
 Then $v_p(w_0'z_0^d)=m_0$ if and only if 
 $v_p(w_0')=0$ and 
 $v_p(z_0)=m_0/d$, since 
$w_0'$ is  free of $d$th powers. Similarly, if $v_p(w_0'z_0^d)>m_0$ then 
either $v_p(z_0)>m_0/d$, or else  $v_p(z_0)=m_0/d$ and $p\mid w_0'$.
 This suggests that we may write
 $$
 v_0=a_0w_0, \quad y_0=b_0 s_0^{m_0/d} t_0^{m_0/d}u_0,
 $$
 where 
 \begin{itemize}
 \item $p\mid a_0b_0 \Rightarrow p\in S$;
 \item
$p\mid s_0w_0u_0 \Rightarrow p\not \in S$;
\item $s_0,t_0$ square-free;
\item $p\mid w_0 \Rightarrow p\mid s_0$;
\item  
$p\mid t_0 \Rightarrow p\in T_0$; and
\item  $u_0$ is $(m_0/d+1)$-full. 
 \end{itemize}
Similarly, we have a factorisation
$$
 v_1=a_1w_1, \quad y_1=b_1 s_1^{m_\infty/d} t_1^{m_\infty/d}u_1,
 $$
 where 
 \begin{itemize}
 \item $p\mid a_1b_1 \Rightarrow p\in S$;
 \item
$p\mid s_1w_1u_1 \Rightarrow p\not \in S$;
\item $s_1,t_1$ square-free;
\item $p\mid w_1 \Rightarrow p\mid s_1$;
\item  
$p\mid t_1 \Rightarrow p\in T_\infty$; and
\item  $u_1$ is $(m_\infty/d+1)$-full. 
 \end{itemize}

There are $O_\ve(|v_0v_1|^\ve) $ choices for $a_i,s_i,w_i\in \ZZ$ for $i=0,1$, by the standard estimate for the divisor function.  We fix a choice of $b_0,b_1, u_0,u_1$ and 
write 
\begin{equation}\label{eq:factor}
A_0=a_0b_0^d s_0^{m_0} u_0^d 
w_0  \quad \text{ and } \quad A_1=a_1b_1^d s_1^{m_\infty} u_1^d 
w_1.
\end{equation}
Note that we have $\gcd(A_0,A_1)=1$. 
Moreover, let
$$
R_0=\left(\frac{B}{|A_0|}\right)^{1/m_0}, \qquad 
R_1=\left(\frac{B}{|A_1|}\right)^{1/m_\infty},
$$
and 
\begin{equation}\label{eq:hj-u}
g_j(\t)=h_j(A_0t_0^{m_0},A_1 t_1^{m_\infty}), \text{ for $1\leq j\leq r$}.
\end{equation}
The binary form $g_j(t_0,t_1)$ is square-free and coprime to the monomial $t_0t_1$, since $h_j(t_0,t_1)$ satisfies these properties.  
For a
(possibly infinite) set $T$ of primes, let 
$$
\1_T(n)=\begin{cases}
1 &\text{ if $p\mid n \Rightarrow p\in T$,}\\
0 &\text{ otherwise.}
\end{cases}
$$
In what follows, we will also write   $T^c$ to denote the   complement of $T$ in the full set of primes.

Then, with all this notation in mind, we have 
$$
M_\v(B)\ll
 \sum_{v_0=a_0w_0}\sum_{v_1=a_1w_1}  \sum_{\substack{b_0,b_1\\
p\mid b_0b_1 \Rightarrow p\in S}
} \sum_{u_0,u_1\in \ZZ} L(R_0,R_1),
$$
where
\begin{equation}\label{eq:LRR}
L(R_0,R_1)=
\sum_{\substack{
(t_0,t_1)\in \ZZ^2\\
|t_0|\leq R_0, ~|t_1|\leq R_1}}
\mu^2(t_0t_1)
\1_{T_0}(t_0)
\1_{T_\infty}(t_1)
\prod_{j=1}^r
 \1^\sharp_{U_j}\left(t_0,t_1\right),
\end{equation}
and where
$$
\1_{U_{j}}^\sharp(t_0,t_1)=
\begin{cases}
1 & \text{
 if  $p\| g_j(\t) \Rightarrow 
 p\in U_j$},\\
0 & \text{ otherwise.}
\end{cases}
$$
The trivial bound for $L(R_0,R_1)$ is 
\begin{align*}
L(R_0,R_1)
&\ll \frac{B^{1/m_0+1/m_\infty}}{ |A_0|^{1/m_0} |A_1|^{1/m_\infty}}\\
&\ll \frac{B^{1/m_0+1/m_\infty}}{ |s_0||v_0|^{1/m_0} |s_1||v_1|^{1/m_\infty} |b_0u_0|^{d/m_0}|b_1u_1|^{d/m_\infty}},
\end{align*}
by \eqref{eq:factor}.
Clearly 
\begin{equation}\label{eq:rad}
|s_i|\gg \rad(v_i), \quad \text{for $i=0,1$}, 
\end{equation}
for a suitable implied constant depending only on $S$.
Note that 
$$
\sum_{\substack{|b_0|>J\\ p\mid b_0\Rightarrow p\in S}} |b_0|^{-d/m_0} \ll \frac{1}{J^{d/m_0}},
$$
for any  $J\geq 1$. Similarly, 
$$
\sum_{\substack{|u_0|>J\\ 
\text{$u_0$ is $(m_0/d+1)$-full}}}
 |u_0|^{-d/m_0} \ll \frac{1}{J^{d^2/(m_0(m_0+d))}},
$$

Let $\ve>0$. In what follows it will be convenient to recall the notation \eqref{eq:cv} for $c_{v,\ve}$ in the statement of Proposition \ref{prop:2fibres}.
It now follows that the overall contribution to
$M_\v(B)$ from parameters $b_0,u_0$ in the range  $\min(|b_0|,|u_0|)>B^\ve$, 
or parameters $b_1,u_1$ in the range  $\min(|b_1|,|u_1|)>B^\ve$
is clearly 
$$
\ll_{\ve} c_{v,\ve}B^{1/m_0+1/m_\infty-\ve/(m_0^2m_\infty^2)},
$$
since we have seen that  there are $O_\ve(|v_0v_1|^\ve) $ choices for $a_i,s_i,w_i\in \ZZ$ 
associated to a particular choice of $\v$.
Thus we deduce that 
\begin{equation}\label{eq:trunck}
\begin{split}
M_\v(B)\ll_\ve~&
 \sum_{v_0=a_0w_0}\sum_{v_1=a_0w_1}  \sum_{\substack{|b_0|,|b_1|\leq B^\ve\\
p\mid b_0b_1 \Rightarrow p\in S}
} \sum_{|u_0|,|u_1|\leq B^\ve} L(R_0,R_1)
\\
&\quad +c_{v,\ve} B^{1/m_0+1/m_\infty-\ve/(m_0^2m_\infty^2)}.
\end{split}
\end{equation}

\subsection{Application of the large sieve}

We shall now apply Lemma \ref{large} to estimate 
\eqref{eq:LRR}, which we shall apply with $m=2$.
Let  $\Omega\subset \ZZ^{2}$ be the set of vectors $
\t\in \NN^2$ such that   $\1_{T_0}(t_0)  \1_{T_\infty}(t_1) =1$
and for which  $p\in U_j$ 
whenever there exists an index $j$ such that  $p\| g_j(\t)$.
For any prime $p\not \in S$, 
let 
$$
A_0(p)=\{\t\in (\ZZ/p^2\ZZ)^{2}: \text{$p\mid t_0$ and $p\not\in T_0$}\}
$$
and 
$$
A_\infty(p)=\{\t\in (\ZZ/p^2\ZZ)^{2}: \text{$p\mid t_1$ and $p\not\in T_\infty$}\}.
$$
Similarly, 
 let 
$$
B_j(p)=\{\t\in (\ZZ/p^2\ZZ)^{2}: \text{
$p\| g_j(\t)$ and $p\not\in U_j$}
\}
$$
for $1\leq j\leq r$. 
Then $\#\Omega \bmod{p^2}\leq (1-\overline{\omega}(p))p^4$, where
$$
\overline{\omega}(p)=\frac{\#\left(A_0(p)\cup A_\infty(p)\cup B_1(p)\cup \dots\cup B_r(p)\right)}{p^{4}}. $$
In particular, we have 
$\overline{\omega}(p)\in [0,1)$.
The following result is concerned with estimating this quantity.

\begin{lemma}\label{lem:exp-omega}
Let $p\not \in S$ and let 
$d=\gcd(m_0,m_\infty)$. Then 
$$
\overline{\omega}(p)=
 \frac{\1_{T_0^c}(p)}{p} +
 \frac{\1_{T_\infty^c}(p)}{p} 
 +\sum_{j=1}^r 
\frac{\1_{U_j^c}(p)  \nu_j(p;\v)}{p^{2}}+O\left(\frac{\gcd(p,A_0A_1)}{p^2}\right),
$$
where
$$
\nu_j(p;\v)=\#\{
\t\in \FF_p^{2}: h_j(v_0t_0^d, v_1t_1^d)=0\}.
$$
\end{lemma}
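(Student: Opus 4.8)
The plan is to compute $\overline{\omega}(p)$ by inclusion–exclusion from the definition
$$
\overline{\omega}(p)=\frac{\#\left(A_0(p)\cup A_\infty(p)\cup B_1(p)\cup\dots\cup B_r(p)\right)}{p^4},
$$
estimating each of the individual sets and showing that all the pairwise (and higher) intersections are absorbed into the error term $O(\gcd(p,A_0A_1)/p^2)$. First I would dispose of the case $p\mid A_0A_1$: there are only $O(1)$ such primes and for each of them the bound $\overline{\omega}(p)=O(\gcd(p,A_0A_1)/p^2)$ is trivial since $\overline{\omega}(p)\le 1$ and $\gcd(p,A_0A_1)=p$, so the stated formula holds with the error term doing all the work (the main terms being $O(1/p)$ anyway). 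Hence for the rest I may assume $p\nmid A_0A_1$, so that the error term is $O(1/p^2)$, and in particular $p$ is coprime to $v_0,v_1$ and to the leading coefficients of the $h_j$.

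Next I would compute the sizes of the individual sets. For $A_0(p)$: if $p\in T_0$ the set is empty, contributing nothing; if $p\notin T_0$, the condition $p\mid t_0$ (with $t_1$ free) cuts out $p\cdot p^2=p^3$ residues in $(\ZZ/p^2\ZZ)^2$, so $\#A_0(p)=\1_{T_0^c}(p)\,p^3$, giving the term $\1_{T_0^c}(p)/p$; symmetrically for $A_\infty(p)$. For $B_j(p)$ with $p\notin U_j$: the condition $p\|g_j(\t)$ means $p\mid g_j(\t)$ but $p^2\nmid g_j(\t)$, where $g_j(\t)=h_j(A_0t_0^{m_0},A_1t_1^{m_\infty})$. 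The number of $\t\in\FF_p^2$ with $g_j(\t)\equiv 0\pmod p$ should be related to $\nu_j(p;\v)$: substituting and using that $t\mapsto t^{m_i}$ and the unit factors $A_i\equiv a_ib_0^d\cdots$ interact with the $d$-th power structure, one shows $\#\{\t\in\FF_p^2:g_j(\t)\equiv0\}$ equals $\nu_j(p;\v)$ up to controlled discrepancies; more precisely I would reduce $g_j(\t)\bmod p$ to the shape $h_j(v_0(\cdot)^d,v_1(\cdot)^d)$ after absorbing $m_i/d$-th power substitutions, which are bijective on $\FF_p^\times$ when $p\nmid$ (relevant exponents) and contribute $O(1)$ otherwise. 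Each such solution lifts to exactly $p$ residues mod $p^2$, and removing those with $p^2\mid g_j(\t)$ changes the count by $O(p)$ residues mod $p^2$ (using that $g_j$ is square-free, so $g_j$ and its derivative have no common zero mod $p$ for $p\notin S$), hence $\#B_j(p)=\1_{U_j^c}(p)\,\nu_j(p;\v)\,p+O(p)$, i.e.\ the term $\1_{U_j^c}(p)\nu_j(p;\v)/p^2+O(1/p^2)$ after dividing by $p^4$.

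Then I would bound the intersections. A pairwise intersection $A_0(p)\cap A_\infty(p)$ forces $p\mid t_0$ and $p\mid t_1$, giving $p^2$ residues, i.e.\ $O(1/p^2)$; $A_0(p)\cap B_j(p)$ forces $p\mid t_0$ and $p\mid g_j(\t)$, which is $\le p\cdot p=p^2$ residues mod $p^2$ (the $t_0\equiv 0$ slice meets $g_j\equiv 0$ in $O(1)$ points of $\FF_p$, each lifting to $p$), again $O(1/p^2)$; and $B_i(p)\cap B_j(p)$ for $i\neq j$ forces $g_i(\t)\equiv g_j(\t)\equiv 0\pmod p$, which by Bézout on the coprime-ish forms $h_i,h_j$ is $O(1)$ points of $\FF_p^2$, hence $O(p^2)$ residues mod $p^2$ and $O(1/p^2)$ after normalising — here I use that $h_i,h_j$ are distinct irreducible binary forms for $i\neq j$, so their resultant is a nonzero integer divisible only by primes in $S$. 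All higher intersections are smaller still. Summing the single-set contributions and folding every intersection into $O(1/p^2)=O(\gcd(p,A_0A_1)/p^2)$ yields the claimed formula.

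The main obstacle I expect is the bookkeeping in relating $\#\{\t\in\FF_p^2:g_j(\t)\equiv0\pmod p\}$ to $\nu_j(p;\v)$: one must carefully track how the substitution $t_i\mapsto A_it_i^{m_i}$ reduces modulo $p$ to a twist of $h_j(v_0(\cdot)^d,v_1(\cdot)^d)$, accounting for the units $a_i,b_i^d,s_i^{m_i},u_i^d,w_i$ in $A_i$ and for the exponents $m_i$ versus $d=\gcd(m_0,m_\infty)$. Getting this matching exactly right (rather than merely up to a multiplicative constant) is what pins down the leading coefficient $\nu_j(p;\v)$ as opposed to some other count; everything else is routine point-counting on affine plane curves over $\FF_p$ together with the Lang–Weil/elementary bound $\nu_j(p;\v)=O(p)$.
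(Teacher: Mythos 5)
Your overall scheme — inclusion--exclusion on $A_0(p)\cup A_\infty(p)\cup B_1(p)\cup\dots\cup B_r(p)$, the trivial disposal of $p\mid A_0A_1$, the exact counts $\#A_0(p)=\1_{T_0^c}(p)p^3$ and $\#A_\infty(p)=\1_{T_\infty^c}(p)p^3$, and absorbing every pairwise intersection into $O(p^2)$ residues modulo $p^2$ — is precisely the paper's decomposition, and your estimates for the intersections are correct in spirit (the paper only asserts the $O(p^2)$ bound and moves on). There is a small arithmetic slip in your count for $\#B_j(p)$: a zero $\x\in\FF_p^2$ of $g_j$ with $\nabla g_j(\x)\neq\0$ has $p^2-p$ lifts $\t\in(\ZZ/p^2\ZZ)^2$ with $p\,\|\,g_j(\t)$, not $p$; since $\#\{\x:g_j(\x)=0\}=O(p)$, the correct intermediate statement is $\#B_j(p)=\1_{U_j^c}(p)\,p^2\,\#\{\x\in\FF_p^2:g_j(\x)=0\}+O(p^2)$, which still gives $\#B_j(p)/p^4=\1_{U_j^c}(p)\,\#\{g_j=0\}/p^2+O(1/p^2)$.

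The genuine gap is exactly where you anticipate it: in passing from $\#\{\t\in\FF_p^2:h_j(A_0t_0^{m_0},A_1t_1^{m_\infty})=0\}$ to $\nu_j(p;\v)=\#\{\t:h_j(v_0t_0^d,v_1t_1^d)=0\}$. You propose to absorb the $m_i/d$-th power substitutions, asserting that $t\mapsto t^{m_i/d}$ is ``bijective on $\FF_p^\times$ when $p\nmid$ (relevant exponents) and contributes $O(1)$ otherwise.'' That criterion is incorrect: $t\mapsto t^e$ is a bijection on $\FF_p^\times$ if and only if $\gcd(e,p-1)=1$, not if and only if $p\nmid e$. For instance with $m_0=4$, $d=2$ and $p\equiv1\pmod 4$ the map $t\mapsto t^2$ is $2$-to-$1$ onto the index-$2$ subgroup $(\FF_p^\times)^2\subsetneq\FF_p^\times=(\FF_p^\times)^d$, and this happens for a positive density of primes; the resulting discrepancy between the two counts is multiplicative, not an additive $O(1)$. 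So the naive substitution does not close the argument. What the paper does instead is exploit the homogeneity of $h_j$: it fixes a generator $\alpha$ of $\FF_p^\times/(\FF_p^\times)^e$ with $e=\mathrm{lcm}(m_0,m_\infty)$, observes that $(\FF_p^\times)^d/(\FF_p^\times)^{m_0}$ and $(\FF_p^\times)^d/(\FF_p^\times)^{m_\infty}$ are cyclic of orders $N_0=\gcd(m_0,p-1)$ and $N_\infty=\gcd(m_\infty,p-1)$ generated by suitable powers of $\alpha$, introduces the twisted counts
\[
\lambda_j(p;A_0,A_1;k,\ell)=\#\bigl\{\t\in\FF_p^2:h_j\bigl(A_0t_0^{m_0}\alpha^{edk/m_0},A_1t_1^{m_\infty}\alpha^{ed\ell/m_\infty}\bigr)=0\bigr\},
\]
and uses the fact that $h_j$ is homogeneous together with the identities $ed/m_\infty=m_0$ and $ed/m_0=m_\infty$ to show $\lambda_j(p;A_0,A_1;k,\ell)=\lambda_j(p;A_0,A_1;0,0)$ for all $(k,\ell)$: multiplying both arguments by a common scalar $\beta$ preserves the zero set, and after choosing $\beta$ correctly the twist factors move onto the other coordinate where they can be absorbed by a change of variables in $t_0,t_1$. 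Averaging over all $(k,\ell)\in\ZZ/N_0\ZZ\times\ZZ/N_\infty\ZZ$ then sweeps out the full coset decomposition of $(\FF_p^\times)^d$ and yields $\nu_j(p;A_0,A_1)=\lambda_j(p;A_0,A_1)+O(1)$, after which the final reduction to $\nu_j(p;\v)$ is the easy $d$-th power substitution $t_i\mapsto b_is_i^{m_i/d}u_it_i$, which genuinely is a bijection since $p\nmid A_0A_1$. This homogeneity-plus-coset-averaging step is the substance of the lemma, and your sketch does not supply a working substitute for it.
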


\begin{proof}
Recall that $\gcd(A_0,A_1)=1$, 
that 
$g_j(t_0,t_1)$ is defined in 
\eqref{eq:hj-u}, and that 
$g_j(t_0,t_1)$ is square-free and coprime to the monomial $t_0t_1$. 
If $p\mid A_0A_1$ we take the trivial upper bound 
$$
\#\left(A_0(p)\cup A_\infty(p)\cup B_1(p)\cup \dots\cup B_r(p)\right)=O(p^3),
$$ 
whence
$
\overline{\omega}(p)=O(1/p), 
$
which is satisfactory. 

Suppose henceforth that $p\nmid A_0A_1$.
We proceed by noting that the intersection of any two sets in the 
union $A_0(p)\cup A_\infty(p)\cup B_1(p)\cup \dots\cup B_r(p)$
contains $O(p^{2})$ elements of $(\ZZ/p^2\ZZ)^{2}$.
Thus  
 $$
\overline{\omega}(p)=
 \frac{\1_{T_0^c}(p)}{p} +
 \frac{\1_{T_\infty^c}(p)}{p} 
 +\sum_{j=1}^r 
\frac{\#B_j(p)}{p^{4}}+O\left(\frac{1}{p^2}\right).
$$
Turning to 
$\#B_j(p)
$ for  $j\in \{1,\dots,r\}$, we write $\u=\x+p\y$ for $\x,\y\in \FF_p^{2}$. Thus
$$
\#\{\t\in (\ZZ/p^2\ZZ)^{2}:
p^2\mid g_j(\t)\} =
\sum_{\substack{\x\in \FF_p^{2}\\ g_j(\x)=0}} 
\#\left\{\y\in \FF_p^{2}: \y.\nabla g_j(\x)=-g_j(\x)/p\right\}.
$$
On enlarging $S$, we can assume that 
$\nabla g_j(\x)\neq \0$ for any 
 $\x$ in the sum. 
Thus each of the $O(p)$ values of $\x$ produces $O(p)$ choices of $\y$, giving
$$
\#\{\t \in (\ZZ/p^2\ZZ)^{2}:
p^2\mid g_j(\t)\} =O(p^{2}).
$$
Hence
$$
\#B_j(p)=\1_{U_j^c}(p) p^{2}\#\{\t\in \FF_p^{2}: g_j(\t)=0\} +O(p^{2}).
$$
Putting this  together we have shown that 
$$
\overline{\omega}(p)=
 \frac{\1_{T_0^c}(p)}{p} +
 \frac{\1_{T_\infty^c}(p)}{p} 
 +\sum_{j=1}^r 
\frac{\1_{U_j^c}(p)  \lambda_j(p;A_0,A_1)}{p^{2}}+O\left(\frac{1}{p^2}\right),
$$
where
\begin{align*}
\lambda_j(p;A_0,A_1)
&=\#\{\t\in \FF_p^{2}: 
h_j(A_0t_0^{m_0},A_1t_1^{m_\infty})=0\}.
\end{align*}
for $1\leq j\leq r$.
In order to complete the proof of the lemma, it will suffice to prove that 
\begin{equation}\label{eq:blue_bottle}
\lambda_j(p;A_0,A_1)=\nu_j(p;\v)+O(1),
\end{equation}
for $1\leq j\leq r$, in the notation  of the lemma. 

To see this, let $e$ be the least common multiple of $m_0$ and $m_\infty$, so that $e=m_0m_\infty/d$.
We pick a generator $\alpha\in \FF_p^*$  of $\FF_p^*/(\FF_p^*)^e$.  Then it is easily confirmed that 
$$
\langle \alpha^{de/m_0}\rangle=(\FF_p^*)^d/(\FF_p^*)^{m_0} \quad \text{ and } \quad
\langle \alpha^{de/m_\infty}\rangle=(\FF_p^*)^d/(\FF_p^*)^{m_\infty},
$$
on noting that 
$(\FF_p^*)^{m_0}$ and 
$(\FF_p^*)^{m_\infty}$ are subgroups of 
$(\FF_p^*)^d$. 
(Indeed, to check the first equality, for example,  it suffices to confirm that $\alpha^{de/m_0}$ has order $m_0/d$ 
in $\FF_p^*$.)
The group 
$(\FF_p^*)^d/(\FF_p^*)^{m_0}$ has order $N_0=\gcd(m_0,p-1)$ and, likewise, 
$(\FF_p^*)^d/(\FF_p^*)^{m_\infty}$ has order $N_\infty=\gcd(m_\infty,p-1)$.
It follows from this that any non-zero $d$th
 power in $\FF_p$ can be represented uniquely as
 $u^{m_0} \alpha^{edk/m_0} $ for some $k\in \ZZ/N_0\ZZ$, 
 or as 
  $u^{m_\infty} \alpha^{ed\ell/m_\infty} $ for some $\ell\in \ZZ/N_\infty\ZZ$.
  
 Define 
 $$ 
 \lambda_j(p;A_0,A_1;k,\ell)=
 \#\{
\t\in \FF_p^{2}: h_j(A_0t_0^{m_0}\alpha^{edk/m_0} ,A_1t_1^{m_\infty}\alpha^{ed\ell/m_\infty} )=0\},
$$
for any $k\in \ZZ/N_0\ZZ$ and $\ell\in \ZZ/N_\infty\ZZ$. 
Let $\beta=\alpha^{-edk/m_0-ed\ell/m_\infty}$.
On multiplying through by $\beta^{\deg(h_j)}$
and recalling that  $h_j$ is homogeneous, 
 we
obtain 
 \begin{align*}
 \lambda_j(p;A_0,A_1;k,\ell)
 &=
 \#\{
\t\in \FF_p^{2}: h_j(A_0t_0^{m_0}\alpha^{edk/m_0}\beta ,A_1t_1^{m_\infty}\alpha^{ed\ell/m_\infty}\beta )=0\}\\
&=
 \#\{
\t\in \FF_p^{2}: h_j(A_0t_0^{m_0}\alpha^{-ed\ell /m_\infty} ,A_1t_1^{m_\infty} 
\alpha^{-edk /m_0} )=0\}.
\end{align*}
But  $ed /m_\infty=m_0$ and $ ed /m_0=m_\infty$. Hence a simple change of
variables yields
 \begin{equation}\label{eq:empty_bottle}
 \lambda_j(p;A_0,A_1;k,\ell)
= \lambda_j(p;A_0,A_1;0,0).
\end{equation}

Let $\nu_j^*(p;A_0,A_1)$ denote the contribution to $\nu_j(p;A_0,A_1)$ from $t_0t_1\neq 0$,
and similarly for 
$\lambda_j^*(p;A_0,A_1;k,\ell)$.
Then we may write
\begin{align*}
\nu_j(p;A_0,A_1)
&= \nu_j^*(p;A_0,A_1) +O(1)\\
&=\frac{1}{N_0N_\infty} \sum_{k\in \ZZ/N_0\ZZ} 
\sum_{\ell \in \ZZ/N_\infty\ZZ}   \lambda_j^*(p;A_0,A_1;k,\ell) +O(1)\\
&=  \lambda_j^*(p;A_0,A_1;0,0) +O(1),
\end{align*}
by \eqref{eq:empty_bottle}. Noting that 
$ \lambda_j^*(p;A_0,A_1;0,0)= \lambda_j(p;A_0,A_1) +O(1)$,
we have therefore shown that 
$$
\lambda_j(p;A_0,A_1)=\nu_j(p;A_0,A_1)+O(1).
$$

At this point we recall the factorisation \eqref{eq:factor}, together with the fact that 
$v_i=a_is_iw_i$, for $i=0,1$. 
Hence, since $p\nmid A_0A_1$, a simple change of variables shows that 
\begin{align*}
\nu_j(p;A_0,A_1)
&=
\#\{
\t\in \FF_p^{2}: h_j(v_0 (b_0 s_0^{m_0/d}t_0)^d, v_1(b_1 s_1^{m_\infty/d}t_1)^d)=0\}\\
&=
\nu_j(p;\v),
\end{align*}
from which the claim \eqref{eq:blue_bottle} follows.
\end{proof}

We will need to study the average size of $\overline{\omega}(p)$ as $p$ varies. We break this into the following results.

\begin{lemma}\label{lem:Ti}
We have 
$$
\sum_{\substack{p\leq x\\ p\not \in T_0}} \frac{1}{p}=(1-\delta_{0,\QQ}(\pi))\log\log x+O(1)
$$
and 
$$
\sum_{\substack{p\leq x\\ p\not \in T_\infty}} \frac{1}{p}=(1-\delta_{\infty,\QQ}(\pi))\log\log x+O(1),
$$
in the notation of   \eqref{eq:dD}.
\end{lemma}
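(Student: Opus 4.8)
\textit{Proof proposal.} The plan is to recognise $T_0$ (and $T_\infty$) as Frobenian sets of primes, compute their densities by Chebotarev, and then upgrade the density statement to the asserted $\log\log$ asymptotic by partial summation. I will treat $T_0$; the argument for $T_\infty$ is word for word the same with $S_0$ replaced by $S_\infty$.

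First I would make the group theory explicit. The action of $\Gal(\bar\QQ/\QQ)$ on the finite set $S_0$ factors through a finite quotient, and I take $\Gamma_{0,\QQ}=\Gal(L/\QQ)$, where $L/\QQ$ is the finite Galois extension cut out by the kernel of this action; the resulting value of $\delta_{0,\QQ}(\pi)$ in \eqref{eq:dD} does not depend on this choice. After enlarging $S$ so as to include the primes ramifying in $L$, every $p\notin S$ has a well-defined Frobenius conjugacy class in $\Gal(L/\QQ)$, and by construction $p\in T_0$ if and only if this class consists of elements which fix a point of $S_0$. Since ``acting with a fixed point on $S_0$'' is a conjugation-invariant property, the set $\{\sigma\in\Gal(L/\QQ)\colon \sigma \text{ fixes an element of }S_0\}$ is a union of conjugacy classes of relative cardinality exactly $\delta_{0,\QQ}(\pi)$. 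Hence, by the Chebotarev density theorem, $T_0$ has natural density $\delta_{0,\QQ}(\pi)$, and its complement $T_0^c$ among all primes has natural density $1-\delta_{0,\QQ}(\pi)$; the finitely many primes of $S$ are harmless. (This is consistent with \eqref{eq:new-delta} and Theorem~\ref{t:densities} on unwinding the definitions, which serves as a sanity check.)

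It then remains to pass from ``density $1-\delta_{0,\QQ}(\pi)$'' to the precise asymptotic with an $O(1)$ error. For this I would invoke the effective form of Chebotarev — the classical unconditional error term $O\bigl(x\exp(-c\sqrt{\log x})\bigr)$ is far more than enough, and even $O\bigl(x(\log x)^{-2}\bigr)$ would do — to get $\#\{p\le x\colon p\notin T_0\}=(1-\delta_{0,\QQ}(\pi))\,\mathrm{li}(x)+O\bigl(x(\log x)^{-2}\bigr)$, and then apply partial summation against $1/t$:
\[
\sum_{\substack{p\le x\\ p\notin T_0}}\frac1p
=(1-\delta_{0,\QQ}(\pi))\int_2^x\frac{\mathrm{d}t}{t\log t}+O(1)
=(1-\delta_{0,\QQ}(\pi))\log\log x+O(1).
\]
Equivalently, one may observe that the Dirichlet series $\sum_{p\notin T_0}p^{-s}$ has, at $s=1$, the logarithmic singularity $(1-\delta_{0,\QQ}(\pi))\log\frac1{s-1}+O(1)$, which is the same Mertens-type input already used (in the form $\sum_{p\le x,\,p\in\mathcal P}\log p\sim\partial\log x$) in the proof of Lemma~\ref{lem:M(x)}. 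There is no genuine obstacle here: the only points needing a little care are matching the Chebotarev density to the combinatorial quantity $\delta_{0,\QQ}(\pi)$ of \eqref{eq:dD}, and checking that the Chebotarev error term is strong enough to be absorbed into the $O(1)$ after partial summation, for which the classical bound suffices.
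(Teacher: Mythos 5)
Your proposal is correct and follows the same route as the paper, which simply cites the Chebotarev density theorem in the form given by Serre (\cite[Thm.~3.4]{serre}) and leaves the details as "straightforward." You have merely made explicit the two steps the paper elides: matching $\delta_{0,\QQ}(\pi)$ to the Chebotarev density of the Frobenian set $T_0$, and passing from the density to the $\log\log$ asymptotic with $O(1)$ error via partial summation (or the Mertens-type form of Chebotarev).
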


\begin{proof}
This is a straightforward consequence of the Chebotarev density theorem, in the form presented by Serre \cite[Thm.~3.4]{serre}, for example.
\end{proof}

Our next result 
concerns the average behaviour of the function 
$\nu_j(p;\v)$ in Lemma 
\ref{lem:exp-omega}, as we average over primes $p\not\in U_j$. This is more difficult and requires 
the use of notation introduced at the start of Section \ref{s:density}, which we recall here.
For a  number field 
$F/\QQ$, let  $\mathcal{P}_F$ denote the set of primes $p\in \ZZ$ that are unramified in $F$ and for which 
there exists a prime ideal $\fp\mid p\fo_F$ of residue degree $1$.
For any positive integer $m\leq [F:\QQ]$ we write 
$\mathcal{P}_{F,m}$ for the subset of $p\in \mathcal{P}_F$ for which there are precisely 
$m$ prime ideals above $p$ of residue degree $1$.

For each  $j\in \{1,\dots, r\}$, 
define the \'etale algebra
$$
N_{E_j,d,v_1/v_0}=\QQ[x]/(r_j(x)),
$$
where 
$
r_j(x)=h_j(x^d,v_1/v_0).
$
As in \eqref{eq:wood}, this has a
factorisation into number fields 
$$
N_{E_j,d,v_1/v_0}=N^{(1)}\times\dots\times N^{(s)},
$$
where $N^{(k)}=N_{E_j,d,v}^{(k)}$,
for $1\leq k\leq s$, where the dependency of $s$ on $j$ is suppressed for legibility.

\begin{lemma}\label{lem:Uj}
For each  $j\in \{1,\dots, r\}$, we have 
$$
\sum_{\substack{p\leq x\\ p\not \in U_j}} \frac{\nu_j(p;\v)}{p^{2}}=
\sum_{k=1}^s
(1-\delta_{D,N^{(k)}}(\pi))\log\log x+
O\left(1+\omega(v_0v_1)\right),
$$
in the notation of   \eqref{eq:dD}, where $\omega(n)$ denotes the number of distinct prime  factors of $n\in \ZZ$.
\end{lemma}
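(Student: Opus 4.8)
The quantity $\nu_j(p;\v)$ counts $\FF_p$-points on the (affine) curve $h_j(v_0 t_0^d, v_1 t_1^d)=0$, and the natural way to average it against $1/p^2$ is to recognise it — up to an additive $O(1)$ and a small ramified set — as the number of degree-one primes above $p$ in the \'etale algebra $N_{E_j,d,v_1/v_0}$, so that Theorem~\ref{t:densities} applies componentwise. First I would set up the dictionary. Dehomogenising $h_j$ by putting $t_1=1$ (the contribution of $t_1=0$ and of $t_0=0$ is $O(1)$, and $h_j$ is coprime to $t_0t_1$), the equation becomes $h_j(v_0 t_0^d, v_1)=0$, i.e. $r_j(t_0 v_0^{1/d})=0$ after the harmless substitution absorbing $v_0$ into the variable — more precisely one checks that for $p\nmid v_0v_1\disc(r_j)$ (an allowable restriction, contributing the $O(1+\omega(v_0v_1))$ error after summing $1/p^2$ over the excluded primes, since $\sum_{p\mid v_0v_1}1/p^2 = O(\omega(v_0v_1))$ trivially and in fact $O(1)$), the number of $t_0\in\FF_p$ with $h_j(v_0t_0^d,v_1)=0$ equals the number of $\FF_p$-points of $\Spec(N_{E_j,d,v_1/v_0}\otimes\FF_p)$, which is exactly the number of degree-one prime ideals of $N_{E_j,d,v_1/v_0}$ above $p$, counted with multiplicity $1$ at the unramified primes. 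Then I would also need to pass from $t_1$ free to the homogeneous count: the terms with $t_1\ne 0$ can be normalised to $t_1=1$ by scaling (since $h_j$ is homogeneous), and this multiplies the count by a bounded factor only up to $d$-th power cosets, contributing $O(1)$ — this is essentially the same averaging manoeuvre as in \eqref{eq:empty_bottle} in the proof of Lemma~\ref{lem:exp-omega}, so I would cite that computation rather than repeat it.

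**Main step.** Having identified $\nu_j(p;\v) = \sum_{k=1}^{s} (\#\{\text{degree-one primes of }N^{(k)}\text{ above }p\}) + O(1)$ for $p$ outside a bad set, and recalling that the condition ``$p\in U_j$'' is, by definition of $U_j$ and the description of $S_{E_j}$, the condition ``$\frob_p$ fixes an element of $S_{E_j}$'', I would split
\[
\sum_{\substack{p\leq x\\ p\notin U_j}}\frac{\nu_j(p;\v)}{p^2}
=\sum_{k=1}^{s}\sum_{\substack{p\leq x\\ p\notin U_j}}\frac{\#\{\fp\mid p\text{ in }N^{(k)},\ \deg\fp=1\}}{p^2}+O(1+\omega(v_0v_1)).
\]
For each $k$, writing $\#\{\fp\mid p,\deg\fp=1\}=\sum_{m\ge 1} m\,\1_{p\in\mathcal P_{N^{(k)},m}}$, and noting $p\notin U_j$ is (outside a finite bad set) the complement of $p\in\mathcal P_E$ where $E$ is the field of definition of $S_{E_j}$, I want
\[
\sum_{\substack{p\leq x\\ p\notin U_j}}\frac{\#\{\fp\mid p,\deg\fp=1\text{ in }N^{(k)}\}}{p^2}
=\Big(\sum_{m} m\,\dens(\mathcal P_{N^{(k)},m}\cap\mathcal P_E^{\,c})\Big)\log\log x+O(1).
\]
Here the key analytic input is the standard partial-summation / Mertens estimate: if a set $\mathcal A$ of primes has a density $\alpha$ (in the Chebotarev/Frobenian sense, which all the sets here do, being defined by splitting conditions in a fixed Galois extension), then $\sum_{p\le x,\,p\in\mathcal A}1/p = \alpha\log\log x + O(1)$; applying this to each of the finitely many Frobenius-class conditions defining $\mathcal P_{N^{(k)},m}\cap\mathcal P_E^{\,c}$ gives the claim, with the $O(1)$ absorbing the (bounded) contributions of $m$ beyond $[N^{(k)}:\QQ]$ and of ramified primes. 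Finally, $\sum_m m\,\dens(\mathcal P_{N^{(k)},m}\cap\mathcal P_E^{\,c}) = 1 - \sum_m m\,\dens(\mathcal P_{N^{(k)},m}\cap\mathcal P_E) = \delta(E,N^{(k)})$ by the very definition of $\delta$ in Section~\ref{s:density} (using that $\sum_m m\,\dens(\mathcal P_{N^{(k)},m})=1$, since almost every $p$ has some splitting type), and $\delta(E,N^{(k)}) = 1-\delta_{D,N^{(k)}}(\pi)$ by \eqref{eq:new-delta}. Summing over $k$ yields the stated formula.

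**Expected obstacle.** The routine but fiddly point is the bookkeeping in the reduction $\nu_j(p;\v)\mapsto$ ``degree-one primes in $N_{E_j,d,v_1/v_0}$'': one must be careful that the \'etale algebra $N_{E_j,d,v_1/v_0}=\QQ[x]/(h_j(x^d,v_1/v_0))$ depends on $v=v_1/v_0$ only through its class in $\QQ^\times/\QQ^{\times,d}$, that the reduction mod $p$ of this algebra is \'etale for all but $O(\omega(v_0v_1))$-many (in fact $O(1)$-many, with the extra primes dividing $v_0v_1$) primes $p$, and that the homogenisation/dehomogenisation and the passage from $t_1=1$ to general $t_1$ each only perturb the count by $O(1)$ uniformly in $\v$ — the uniformity in $\v$ being what forces the error term $O(1+\omega(v_0v_1))$ rather than $O(1)$, since the discriminant of $r_j$ can acquire up to $\omega(v_0v_1)$ new prime factors. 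None of this is deep; the genuine content is entirely in Theorem~\ref{t:densities} (equivalently the displayed identity for $\delta$) together with Mertens' theorem, and the main work is checking that the finitely many normalisations are all harmless. I would organise the write-up so that the local identification of $\nu_j(p;\v)$ is stated as a short claim with a one-line justification citing \eqref{eq:empty_bottle}, and then the averaging is three lines invoking Chebotarev and the definition of $\delta(E,N^{(k)})$.
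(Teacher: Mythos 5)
Your proposal follows the same route as the paper: split off primes dividing $v_0v_1$, dehomogenise to reduce $\nu_j(p;\v)/p^2$ to $\#\{t\in\FF_p: r_j(t)=0\}/p$ where $r_j(t)=h_j(t^d,v_1/v_0)$, factor $r_j$ into irreducibles to obtain the number fields $N^{(k)}$, apply Mertens/Chebotarev to each factor, and conclude via Theorem~\ref{t:densities} together with \eqref{eq:new-delta}. One small inaccuracy worth flagging: the contribution of the primes $p\mid v_0v_1$ is $\sum_{p\mid v_0v_1}\nu_j(p;\v)/p^2 \ll \sum_{p\mid v_0v_1}1/p \ll \omega(v_0v_1)$ (since $\nu_j(p;\v)$ can be of size $p$ there), not $\sum 1/p^2$ as you wrote, and the dehomogenisation step is the elementary $\PP^1$-parametrisation by $t_0/t_1$ rather than the coset-averaging manoeuvre of \eqref{eq:empty_bottle}; neither changes the final error term.
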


\begin{proof}
We have 
$$
\sum_{\substack{p\leq x\\ p\not \in U_j}} \frac{\nu_j(p;\v)}{p^{2}}
=
\sum_{\substack{p\leq x\\ p\not \in U_j\\ p\nmid v_0v_1}} \frac{\nu_j(p;\v)}{p^{2}}+
\sum_{\substack{p\leq x\\ p\not \in U_j\\ p\mid v_0v_1}} \frac{\nu_j(p;\v)}{p^{2}}
$$
Since $\gcd(v_0,v_1)=1$ the second term is seen to be 
$$
\ll
\sum_{\substack{p\leq x\\ p\mid v_0v_1}} 
\frac{1}{p}\ll 
\omega(v_0v_1).
$$
Next, we see that
$$
\sum_{\substack{p\leq x\\ p\not \in U_j\\ p\nmid v_0v_1}} \frac{\nu_j(p;\v)}{p^{2}}=
\sum_{\substack{p\leq x\\ p\not \in U_j\\ p\nmid v_0v_1}}
 \frac{\#\{t\in \FF_p: h_j(t^d,v_1/v_0)=0\}}{p}+O\left(1\right).
$$
Write $r_j(t)=r_j(t^d,v_1/v_0)$
and let 
$
r_j(t)=r_j^{(1)}(t)\dots r_j^{(s)}(t)
$
be its factorisation into irreducible factors over $\QQ$.
Then $N^{(k)}$ is the number field $\QQ[t]/(r_j^{(k)})$, for $1\leq k\leq s$. 
We have 
$$
\sum_{\substack{p\leq x\\ p\not \in U_j\\ p\nmid v_0v_1}} \frac{\nu_j(p;\v)}{p^{2}}
=
\sum_{k=1}^s\sum_{\substack{p\leq x\\ p\not \in U_j\\ p\nmid v_0v_1}} 
 \frac{\#\{t\in \FF_p: r_j^{(k)}(t)=0\}}{p}+O(1) .
$$
To begin with, it follows from the prime ideal theorem that 
$$
\sum_{\substack{p\leq x}} 
 \frac{\#\{t\in \FF_p: r_j^{(k)}(t)=0\}}{p}=\log \log x +O\left(1+\omega(v_0v_1)\right).
$$
Next, we note that  $p\in U_j$ if and only if $\frob_p$ fixes a component of $S_{E_j}$. 
Let  $F_j$ denote the field of definition of the elements of 
$S_{E_j}$. Then, for any $p\not \in S$, the condition  
$p\in U_j$ is equivalent to the condition   $p\in \mathcal{P}_{F_j}$.
Likewise, for any positive integer $m\leq [N^{(k)}:\QQ]$, 
we will have 
$\#\{t\in \FF_p: r_j^{(k)}(t)=0\}=m$ if and only if
$p\in \mathcal{P}_{N^{(k)},m}$.
Hence 
\begin{align*}
\sum_{\substack{p\leq x\\ p\not \in U_j}} \frac{\nu_j(p;\v)}{p^{2}}
=~&
\sum_{k=1}^s\left(
\log\log x-
\sum_{m=1}^{[N^{(k)}:\QQ]}m
\hspace{-0.2cm}
\sum_{\substack{p\leq x\\
p\in  \mathcal{P}_{N^{(k)},m}\cap \mathcal{P}_{F_j}
}}\frac{1}{p}\right)
+
O\left(1+\omega(v_0v_1)\right).
\end{align*}
The remaining sum over primes is susceptible to a further application of the 
Chebotarev density theorem. Once coupled with 
Theorem \ref{t:densities} and \eqref{eq:new-delta}, this leads to the statement of the lemma. 
\end{proof}

We may combine the previous two results to produce a lower bound for 
the quantity $L(Q)$ in Lemma \ref{large}, with the choice of $\overline{\omega}(p)$ from  Lemma 
\ref{lem:exp-omega}.

\begin{lemma}\label{lem:L(Q)}
For any  $\ve>0$, we  have the lower bound
$$
L(Q)\gg_\ve \frac{(\log Q)^{\Theta_v(\pi)}}{|A_0 A_1|^{\ve}},
$$
where $\Theta_v(\pi)$ is given by \eqref{eq:theta-v}.
\end{lemma}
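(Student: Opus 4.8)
The plan is to bound $L(Q)$ from below by a Rankin-type argument applied to the multiplicative function $q\mapsto\mu^{2}(q)\prod_{p\mid q}\overline{\omega}(p)$, feeding in the average-order estimates of Lemmas~\ref{lem:Ti} and~\ref{lem:Uj}. Since $\overline{\omega}(p)\in[0,1)$ we have $\tfrac{\overline{\omega}(p)}{1-\overline{\omega}(p)}\ge\overline{\omega}(p)\ge0$, so, adopting the convention $\overline{\omega}(p)=0$ for $p\in S$, it suffices to bound $\sum_{q\le Q}\mu^{2}(q)\prod_{p\mid q}\overline{\omega}(p)$ from below. Fix a small constant $\eta>0$, to be chosen in terms of the fibration only, put $z=Q^{\eta}$, and write $P^{+}(q)$ for the largest prime factor of $q$. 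Restricting to $z$-smooth $q$ and completing the sum gives
\[
L(Q)\ \ge\ \sum_{\substack{q\le Q\\ P^{+}(q)\le z}}\mu^{2}(q)\prod_{p\mid q}\overline{\omega}(p)\ =\ \prod_{p\le z}\bigl(1+\overline{\omega}(p)\bigr)\ -\ \sum_{\substack{q>Q\\ P^{+}(q)\le z}}\mu^{2}(q)\prod_{p\mid q}\overline{\omega}(p).
\]

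The tail is controlled by Rankin's trick with $\sigma=1/\log z$:
\[
\sum_{\substack{q>Q\\ P^{+}(q)\le z}}\mu^{2}(q)\prod_{p\mid q}\overline{\omega}(p)\ \le\ Q^{-\sigma}\prod_{p\le z}\bigl(1+\overline{\omega}(p)p^{\sigma}\bigr)\ \le\ e^{-1/\eta}\,\prod_{p\le z}\bigl(1+\overline{\omega}(p)\bigr)\,\exp\!\Bigl(\sum_{p\le z}\overline{\omega}(p)(p^{\sigma}-1)\Bigr).
\]
Here Lemma~\ref{lem:exp-omega}, together with the trivial point count $\nu_{j}(p;\v)\ll_{\pi}p$, yields the bound $\overline{\omega}(p)\ll_{\pi}1/p$ \emph{uniformly in} $\v$; hence $p^{\sigma}-1\le e\sigma\log p$ for $p\le z$ gives $\sum_{p\le z}\overline{\omega}(p)(p^{\sigma}-1)\ll_{\pi}\sigma\log z=1$. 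Because $\Theta_{v}(\pi)$, and therefore every implied constant that arises, is bounded in terms of $\pi$ alone (Theorem~\ref{t:remark}), one may now fix $\eta=\eta(\pi)$ small enough that the tail is at most $\tfrac12\prod_{p\le z}(1+\overline{\omega}(p))$, so that $L(Q)\ge\tfrac12\prod_{p\le z}(1+\overline{\omega}(p))\gg_{\pi}\exp\bigl(\sum_{p\le z}\overline{\omega}(p)\bigr)$.

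It then remains to evaluate $\sum_{p\le z}\overline{\omega}(p)$. Summing the formula of Lemma~\ref{lem:exp-omega} over $p\le z$, estimating the $T_{0}$- and $T_{\infty}$-terms by Lemma~\ref{lem:Ti}, the $U_{j}$-terms by Lemma~\ref{lem:Uj}, and the error term via $\sum_{p}\gcd(p,A_{0}A_{1})/p^{2}\ll1+\omega(A_{0}A_{1})$, one obtains
\[
\sum_{p\le z}\overline{\omega}(p)\ =\ \Theta_{v}(\pi)\log\log z\ +\ O\bigl(1+\omega(A_{0}A_{1})\bigr),
\]
where the leading coefficient $(1-\delta_{0,\QQ}(\pi))+(1-\delta_{\infty,\QQ}(\pi))+\sum_{j}\sum_{k}(1-\delta_{E_{j},N^{(k)}}(\pi))$ is exactly $\Theta_{v}(\pi)$ by \eqref{eq:theta-v}, the $E_{j}$ exhausting the non-split fibres away from $0,\infty$. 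Since $z=Q^{\eta}$ gives $\log\log z=\log\log Q+O(1)$ and $\eta^{\Theta_{v}(\pi)}\gg_{\pi}1$, while $e^{O(\omega(A_{0}A_{1}))}\ll_{\ve}|A_{0}A_{1}|^{\ve}$ for every $\ve>0$, we conclude
\[
L(Q)\ \gg_{\pi}\ (\log z)^{\Theta_{v}(\pi)}\,e^{-O(1+\omega(A_{0}A_{1}))}\ \gg_{\pi,\ve}\ \frac{(\log Q)^{\Theta_{v}(\pi)}}{|A_{0}A_{1}|^{\ve}},
\]
which is the assertion of the lemma.

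The main obstacle is the uniformity in the auxiliary parameters $\v$ (equivalently $A_{0},A_{1}$): the Rankin blow-up factor $\prod_{p\le z}\exp(\overline{\omega}(p)(p^{\sigma}-1))$ must be bounded by a constant depending only on $\pi$, which pins down the choice $\sigma\asymp1/\log z$ and forces $z$ to be a ($\pi$-dependent) power of $Q$; a weaker choice such as $z=\exp(\sqrt{\log Q})$ would lose a constant factor in the exponent of $\log Q$. This hinges on the uniform bound $\overline{\omega}(p)\ll_{\pi}1/p$ coming from Lemma~\ref{lem:exp-omega} and on $\Theta_{v}(\pi)$ taking only finitely many values.
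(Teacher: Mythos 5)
Your argument is correct and is essentially the same as the paper's: both reduce to $L(Q)\ge\sum_{q\le Q}\mu^2(q)\prod_{p\mid q}\overline\omega(p)$, both use the uniform bound $\overline\omega(p)\ll_\pi 1/p$ from Lemma~\ref{lem:exp-omega}, both evaluate $\sum_{p}\overline\omega(p)$ via Lemmas~\ref{lem:Ti} and~\ref{lem:Uj} to extract $\Theta_v(\pi)\log\log Q + O(1+\omega(A_0A_1))$, and both absorb the $\omega(A_0A_1)$ error into $|A_0A_1|^\ve$. The only difference is cosmetic: where the paper invokes Friedlander--Iwaniec \cite[Thm.~A.3]{FI} as a black box (after verifying the hypothesis $\sum_{y<p\le x}\gamma(p)\log p\le a\log(x/y)+b$ with $a,b=O_\pi(1)$), you re-derive that Rankin-type lower bound from scratch by restricting to $z$-smooth $q$ with $z=Q^{\eta(\pi)}$ and pruning the tail with $\sigma=1/\log z$. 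Your inline derivation has the small virtue of making completely explicit why all the implied constants depend only on $\pi$ and not on $\v$ --- the uniformity the paper leaves implicit in "$a,b=O(1)$" --- but the underlying sieve idea and the final bookkeeping are identical.
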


\begin{proof}
Since $1-\overline{\omega}(p)\leq 1$, we have 
$$
L(Q)\geq \sum_{q\leq Q} \mu^2(q) \prod_{p\mid q} \overline{\omega}(p).
$$
There are many results in the literature concerning mean values of non-negative arithmetic functions. 
However, we can get by with the relatively crude lower bound found in 
\cite[Thm.~A.3]{FI}, which is based on an application of Rankin's trick. Let $\gamma:\NN\to\RR_{\geq 0}$ be a 
multiplicative arithmetic function that is supported on square-free integers and which satisfies
\begin{equation}\label{eq:ab}
\sum_{y<p\leq x}\gamma(p)\log p\leq a\log(x/y)+b,
\end{equation}
for any $x>y>2$, 
for appropriate constants $a,b>0$.  Then it follows from 
\cite[Thm.~A.3]{FI} that
\begin{equation}\label{eq:FI}
\sum_{n\leq x}\gamma(n)\gg \prod_{p\leq x}\left(1+\gamma(p)\right),
\end{equation}
where the implied constant is allowed to depend on $a$ and $b$.
We seek to apply this with
$$
\gamma(n)=\mu^2(n)\prod_{p\mid n}\overline{\omega}(p).
$$

It is clear from Lemma \ref{lem:exp-omega} that 
 $\overline{\omega}(p)=O(1/p)$. Hence
$$
\sum_{y<p\leq x}\gamma(p)\log p
\ll 1+
\sum_{y<p\leq x}
\frac{\log p}{p}\ll 
1+\log(x/y),
$$
uniformly in $v_0$ and $v_1$. Hence \eqref{eq:ab} holds for $a,b=O(1)$ and it follows from \eqref{eq:FI} that 
$$
L(Q)\gg  \prod_{p\leq Q} \left(1+\overline{\omega}(p)\right),
$$
for an absolute implied constant.
On appealing once more to 
Lemma \ref{lem:exp-omega}, 
we find that
 \begin{align*}
\log\left(  \prod_{p\leq Q} \left(1+\overline{\omega}(p)\right)\right)=~&
\sum_{\substack{p\leq Q\\ p\not\in T_0}} \frac{1}{p} 
+
\sum_{\substack{p\leq Q\\ p\not\in T_\infty}} \frac{1}{p} 
+\sum_{j=1}^r 
\sum_{\substack{p\leq Q\\ p\not\in U_j}}
\frac{ \nu_j(p;\v)}{p^{2}}
+O\left(
1+\omega(A_0A_1)
\right).
\end{align*}
These sums are estimated using Lemmas \ref{lem:Ti} and \ref{lem:Uj}, leading to the conclusion that
$$
\log\left(  \prod_{p\leq Q} \left(1+\overline{\omega}(p)\right)\right)=
\widetilde{\Theta}(\pi,v_1/v_0)
\log \log Q
+O\left(
1+\omega(A_0A_1)
\right),
$$
where
$$
\widetilde{\Theta}(\pi,v_1/v_0)=
2-\delta_{0,\QQ}(\pi)
-\delta_{\infty,\QQ}(\pi)  +\sum_{j=1}^r \sum_{k=1}^s\left(1-\delta_{E_j,N_{E_j,d,v_1/v_0}^{(k)}}(\pi)\right),
$$
in the notation of 
  \eqref{eq:dD}. Clearly
  $\widetilde{\Theta}(\pi,v_1/v_0)=\Theta_v(\pi)$, the latter being defined in \eqref{eq:theta-v}.
Hence, the statement of the lemma follows on exponentiating and using the fact that 
$\omega(n)\ll (\log |n|)/(\log\log |n|)
$ for any non-zero $n\in \ZZ$.
\end{proof}

\subsection{Completion of the  proof of  Proposition \ref{prop:2fibres}}

We begin by focusing on the estimation of the quantity 
$L(R_0,R_1)$ that was defined in \eqref{eq:LRR}.
In view of  \eqref{eq:factor}, we see that 
$A_0=v_0 (b_0 s_0^{m_0/d}u_0)^d$ and 
$A_1=v_1 (b_1 s_1^{m_\infty/d}u_1)^d$.
Recall that $s_i\mid v_i$ for $i=0,1$.
Taking $Q=B^\ve$,
we note that 
$$
R_0^{m_0}=\frac{B}{|A_0|}\geq\frac{B}{|v_0 (s_0 b_0u_0)^{m_0}|}\geq B^{1-(1+3m_0)\ve}\geq Q^{4m_0},
$$
provided that $\ve\leq 1/(1+7m_0)$. Similarly, we can assume that 
$R_1\geq Q^4$ if $\ve>0$ is chosen to be sufficiently small. 
Hence, with these choices, we'll have 
$$
(R_0 +Q^{4})(R_1 +Q^{4})
\ll R_0R_1 \ll  \frac{B^{1/m_1+1/m_\infty}}{|A_0|^{1/m_0} |A_1|^{1/m_\infty}}.
$$
We may now apply  Lemma \ref{lem:L(Q)} in Lemma \ref{large} to deduce that 
\begin{align*}
L(R_0,R_1)\ll_\ve ~&
\frac{B^{1/m_0+1/m_\infty}}{|A_0|^{1/m_0}|A_1|^{1/m_\infty}}
\cdot \frac{|A_0A_1|^\ve}{(\log B)^{\Theta_v(\pi)}
  }.
\end{align*}
Substituting this into \eqref{eq:trunck}, recalling \eqref{eq:rad} and summing over $b_0,b_1,u_0,u_1$, the statement of 
Proposition 
\ref{prop:2fibres} easily follows.

\section{Examples: lower bounds and asymptotics}\label{s:examples}

Let $\pi:X\to \PP^1$ be a standard fibration. 
It is clear from the constructions in Section \ref{s:sparsity} that we only be able to interpret local solubility conditions outside a given finite set $S$ of primes.
With more work one might be able to incorporate  local solubility at places in $S$, but this should not change the order of growth, which is the main interest in this paper.
Accordingly, for any finite set $S$ of primes, we introduce the counting function
$$
N_{\text{loc},S}(\pi,B)=\#\left\{
x\in \PP^1(\QQ)\cap \pi(X(\mathbf{A}_{\QQ}^S)): H(x)\leq B
\right\},
$$
where $H$ is the usual height function on $\PP^1(\QQ)$ and $\mathbf{A}_{\QQ}^S$ is the set of ad\`eles away from $S$.
We clearly have $N_{\text{loc},S}(\pi,B)\geq N_{\text{loc}}(\pi,B)$ and we expect these two counting functions to have the same order of magnitude. 

We shall prove several results about Halphen surfaces.
Let $m>1$ be an integer. A {\em Halphen pencil} is a geometrically irreducible pencil 
of plane curves  of degree $3m$ with multiplicity $m$ at $9$ base points $P_1,\dots,P_9$.
We let $X$ be the {\em Halphen surface of order $m$} obtained by blowing up $\mathbb P^2$ at these nine points, as introduced by Halphen \cite{Halphen} in 1882.  
We shall assume that $P_1,\dots,P_9$ are globally defined over $\QQ$, so that $X$ is a smooth, proper, geometrically integral surface defined over $\QQ$. In fact, $X$ is a rational elliptic surface and we obtain a standard morphism $\pi : X \to \mathbb P^1$, such that there exists a unique fibre of multiplicity $m$. In particular, $\pi$  does not admit a section.

\subsection{Lower bounds}

In this section we  establish some  lower bounds for $N_\text{loc,S}(\pi,B)$. 
The following result demonstrates that 
Conjecture \ref{con2} would be false with the exponent $\Delta(\pi)$ and that it is indeed sometimes necessary to take a smaller exponent. 

\begin{thm}\label{t:3}
Let  $\pi:X\to \PP^1$ be a standard fibration. Assume 
it only has non-split fibres above   $0$, $1$ and $\infty$, comprising
geometrically irreducible
double fibres over $0$ and $\infty$, and a non-split fibre of multiplicity one above $1$  that is split by a 
quadratic extension.  Then 
$$
B\ll
N_{\text{loc},S}(\pi,B)\ll B .
$$ 
\end{thm}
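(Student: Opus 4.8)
The plan is to prove the two inequalities by rather different methods. Write a point of $\PP^1(\QQ)$ as $[a:b]$ with $a,b\in\ZZ$ coprime, with coordinates chosen so that $0=V(x_0)$, $\infty=V(x_1)$, $1=V(x_0-x_1)$, and $H([a:b])=\max(|a|,|b|)$. Let $K=\QQ(\sqrt d)$ with $d\in\ZZ$ squarefree be the quadratic field splitting the fibre $X_1$, and enlarge $S$ so that it contains every prime dividing $2d$ together with the finitely many primes required for the corollaries of Section~\ref{s:sparsity}; enlarging $S$ only weakens the constraints, so is harmless for the upper bound and is the relevant case for the lower bound.

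For the upper bound, since the double fibres over $0$ and $\infty$ are geometrically irreducible, each of $S_0$ and $S_\infty$ is a single $\QQ$-rational component and hence $T_0=T_\infty=\{p\notin S\}$ in the notation of Corollary~\ref{cor:sparsity}. That corollary therefore forces $v_p(a)\ne 1$ and $v_p(b)\ne 1$ for all $p\notin S$ whenever $[a:b]$ is counted by $N_{\text{loc},S}(\pi,B)$, i.e.\ the prime-to-$S$ parts of $a$ and $b$ are square-full. Since the number of square-full integers up to $x$ is $O(x^{1/2})$ and $\sum_{n\ S\text{-smooth}}n^{-1/2}$ converges, there are $O(B^{1/2})$ admissible values for $a$ and for $b$ with absolute value at most $B$; multiplying these counts gives $N_{\text{loc},S}(\pi,B)\ll B$.

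For the lower bound I would produce $\gg B$ locally soluble fibres explicitly. As all non-split fibres of $\pi$ lie over $\QQ$-rational points, Corollary~\ref{cor:onlysplitoverdegree1} is available. For coprime positive integers $s,t$ with $\gcd(s,2dt)=1$ set $x_{s,t}=[s^2:dt^2]$, a primitive representation, and verify the condition of Corollary~\ref{cor:onlysplitoverdegree1} at each non-split fibre for every $p\notin S$. Over $0$ and $\infty$ one has $v_p(x_0)=2v_p(s)$ and $v_p(x_1)=2v_p(t)$ (using $p\nmid d$), which, when positive, lie in the positive span of the unique multiplicity $2$; the component in question is geometrically irreducible and $\QQ$-rational, hence $\frob_p$-fixed. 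Over $1$, $v_p(x_0-x_1)=v_p(s^2-dt^2)$ is positive only if $p\nmid st$, in which case $d\equiv(st^{-1})^2\pmod p$ is a square, so $p$ splits in $K$, so $\frob_p$ fixes a multiplicity-one geometric component of $X_1$, and every positive value lies in the positive span of $1$. Hence $X_{x_{s,t}}(\QQ_p)\ne\emptyset$ for all $p\notin S$; restricting $s/t$ further to a fixed interval ensures real solubility as well (using $X(\RR)\ne\emptyset$), at the cost of only a positive proportion. Since $(s,t)\mapsto x_{s,t}$ is injective on positive coprime pairs and the number of admissible pairs with $1\le s\le B^{1/2}$ and $1\le t\le(B/|d|)^{1/2}$ is $\gg_d B$, we conclude $N_{\text{loc},S}(\pi,B)\gg B$.

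The crux is the verification over the fibre at $1$: one must see that the quadratic-residue relation ``$p\mid s^2-dt^2\Rightarrow p$ splits in $K$'' holds automatically for this family and is exactly the fixed-point condition of Corollary~\ref{cor:onlysplitoverdegree1}, and one must use the fact --- established in the proof of that corollary after enlarging $S$ --- that a $\frob_p$-fixed geometric component of a non-split fibre carries a smooth $\FF_p$-point on the correct logarithmic stratum, which lifts by Theorem~\ref{thm:lifting} since the relevant multiplicity is $1$. The remaining ingredients are the standard counts of square-full integers and of lattice points in a box.
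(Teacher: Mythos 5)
Your proof is correct. The lower bound is essentially identical to the paper's: both substitute the family $[s^2:dt^2]$ into the exact counting problem furnished by Corollary~\ref{cor:onlysplitoverdegree1}, arranged so that the quadratic-residue condition at the fibre over $1$ is automatic from $s^2\equiv dt^2\pmod p$. For the upper bound the routes diverge: the paper cites Theorem~\ref{t:2} after computing $\Theta(\pi)=0$ (the minimum over twists being attained at the splitting field $F$), which routes through the full large-sieve machinery of Section~\ref{s:large}, whereas you read off directly from Corollary~\ref{cor:sparsity} that the prime-to-$S$ parts of both coordinates must be square-full and then multiply two $O(B^{1/2})$ counts, discarding the condition at $D=1$ altogether. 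Your elementary shortcut is sound precisely because $\Theta(\pi)=0$ means there is no power of $\log B$ to be saved; if a non-trivial logarithmic saving were needed, the sieve apparatus (and hence the condition at $1$) would be essential. You also flag the issue of real solubility, which the paper's ``exact'' identification of $N_{\text{loc},S}$ with the Corollary~\ref{cor:onlysplitoverdegree1} count quietly elides; note though that your fix presupposes $X(\RR)\neq\emptyset$, which is not stated as a hypothesis of the theorem (the sensible reading, shared by the paper, is that the archimedean place is tacitly excluded from the local conditions).
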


\begin{proof}
Suppose that $F=\QQ(\sqrt{d})$ is the quadratic  extension that splits the fibre above $1$, for square-free $d\in \ZZ$. Then it is clear that 
\begin{align*}
0\leq \Theta(\pi)&=\min_{\text{$K/\QQ$ quadratic}}  \left(1-\delta_{1,K}(\pi)\right)
\leq 1-\delta_{1,F}(\pi)=0.
\end{align*}
Hence the upper bound is a direct consequence of Theorem~\ref{t:2}.

For the lower bound we compose the exact counting problem, using Corollary~\ref{cor:onlysplitoverdegree1}. Thus there exists a finite set of places $S$, containing the prime divisors of $2d$, such that
\[
N_{\text{loc},S}(\pi,B)=
 \frac12 \#\left\{ (a,b) \in \ZZp^2\colon 
\begin{array}{l}
|a|,|b|\leq B\\
p \not\in S \Rightarrow 2 \mid v_p(a) \text{ and } 2 \mid v_p(b)\\ 
p \not\in S, p \mid a-b \Rightarrow p \in \mathcal P_F
\end{array}
\right\}.
\]
The lower bound is provided by taking pairs $(a,b)$ of the form $(u^2,d v^2)$.
\end{proof}

In this result we have  $2-\deg \partial_\pi=1$, so that the exponent of $B$ matches the predicted exponent of $B$ in Conjectures \ref{con1} and \ref{con2}. We also have 
$\delta_{0,\QQ}(\pi)=\delta_{\infty,\QQ}(\pi)=1$ and $\delta_{1,\QQ}(\pi)=\frac{1}{2}$, so that  $\Delta(\pi)=\frac{1}{2}$.  However, we saw in the proof that $\Theta(\pi)=0$. 
Thus Theorem \ref{t:3} is in agreement with Conjecture \ref{con2}.

Let us describe what is going on geometrically. Consider the finite \'etale orbifold $\mu_2$-cover $\theta_v \colon \mathbb P^1 \to \mathbb P^1$ given by $(x\colon y) \mapsto (x^2\colon vy^2)$, and the pullback fibrations $\pi_v \colon X_v \to \mathbb P^1$ obtained from normalisation of the pullback of $\pi$ along $\theta_v$.
By Proposition~\ref{prop:thin} we see that the two double fibres of $\pi$ pull back to components of mutiplicity one on $\pi_v$. Also, all fibres which do not lie over $1$ in the composition $X_v \xrightarrow{\pi_v} \mathbb P^1 \xrightarrow{\theta_v} \mathbb P^1$ are split. We proceed by  studying the fibres over $1$.

First we study the fibre of $1$ in $\theta_v$. For $v \in \QQ^\times/\QQ^{\times,2}$, we have $\theta^{-1}(1) = \Spec A$, where $A$ is the degree $2$ \'etale algebra $\QQ(\sqrt{v})$ if $v\not\in \QQ^{\times,2}$, and $\QQ \times \QQ$ for $v \in \QQ^{\times,2}$. This gives
\[
\Delta(\pi_v) = \sum_{D'\mid D} \left(1-\delta_{D'}(\pi_v)\right) = \begin{cases}
0 & \text{ if } v \equiv d,\\
\frac12 + \frac12  = 1 & \text{ if } v \equiv 1,\\
\frac 12 & \text{ otherwise},
\end{cases}
\]
where the sum range over all points $D'$ lying above $D=1 \in (\mathbb P^1)^{(1)}$. In the first case, the fibre over $1$ (which is split by $F$) pulls back to a $F$-point, and becomes split. In the second, case the fibre pulls back to two $\QQ$-points. In the last case, the fibre is irreducible and its residue field is linearly disjoint from the splitting field, and we obtain $\Delta(\pi_v)=\Delta(\pi)$, in general.

Theorem~\ref{t:3} indicates that the main contribution to the point count comes from the single cover $\pi_d$. If we were to exclude the thin set of points coming from this cover, we are left with infinitely many covers $\pi_v$, with $\Delta(\pi_v)= \Delta(\pi)$ for $v\neq 1$.
Proposition \ref{prop:thin}(b) implies that the  covers have no multiple fibres, since it gives
 $$m_{P'} = \frac{m_P}{\gcd\!\left(m_P,e(P'/P)\right)} = \frac{2}{\gcd(2,2)} = 1,
 $$
for each $P'\mid 0, \infty$. Hence, 
in the light of the  original Loughran--Smeets conjecture 
 \cite[Conj.~1.6]{LS},
 we expect the remaining covers to 
 contribute order $B/\sqrt{\log B}$ 
 to the counting function, apart from the cover corresponding to $1$, which should contribute 
 order $B/\log B$. 
 
\medskip

Our second lower bound deals with the case of precisely two non-split fibres and is consistent with 
Conjecture \ref{con2}, 
since $\deg\partial_\pi=2-\frac{1}{m_0}-\frac{1}{m_\infty}$.

\begin{thm}\label{t:ONE}
Let $\pi:X\to \PP^1$ be a standard fibration for which the only non-split fibres lie over $0$ and $\infty$. Then
\[
N_{\text{loc,S}}(\pi,B)\gg 
\frac{B^{\frac{1}{m_0} + \frac{1}{m_\infty}}}{(\log B)^{\Delta(\pi)}}.
\]
\end{thm}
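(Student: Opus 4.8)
The plan is to establish the lower bound by producing a large family of everywhere locally soluble fibres that is detected by the crudest incarnation of the sparsity criterion, and then counting this family with Proposition~\ref{prop:maincount}.

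First I would apply Corollary~\ref{cor:onlysplitoverdegree1}. Since the non-split fibres of $\pi$ lie only over the rational points $0$ and $\infty$, there is a finite set of primes $S$ depending only on $\pi$, and a model $\mathcal X\to\PP^1_{\mathcal O_S}$, such that for every primitive representative $(a_0,a_1)$ of a point $Q\in\PP^1(\QQ)$ and every $v=v_p\notin S$, one has $X_Q(\QQ_v)\ne\emptyset$ precisely when $v_p(a_0)$ lies in the positive linear span of the multiplicities of some intersecting, $\frob_v$-fixed geometric components of $X_0$, and simultaneously $v_p(a_1)$ lies in the analogous span for $X_\infty$ (the conditions at the split closed points being automatic after the enlargement of $S$ contained in that corollary). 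A convenient \emph{sufficient} condition for both is that, for every $p\notin S$,
\[
m_0\mid v_p(a_0),\qquad m_\infty\mid v_p(a_1),\qquad p\mid a_0\Rightarrow p\in T_0,\qquad p\mid a_1\Rightarrow p\in T_\infty,
\]
where $T_0=\{p\notin S:\frob_p\text{ fixes an element of }S_0\}$ and $T_\infty=\{p\notin S:\frob_p\text{ fixes an element of }S_\infty\}$. Indeed, if $v_p(a_0)=km_0$ with $k\ge1$ and $p\in T_0$, then after one further enlargement of $S$ (again depending only on $\pi$) Lang--Weil produces an $\FF_v$-point of $\mathcal X_0$ lying on a single $\frob_v$-fixed geometric component of minimal multiplicity $m_0$, away from the other components, and this point lifts by Theorem~\ref{thm:lifting} because $km_0$ lies in the positive span of $\{m_0\}$; the same reasoning applies at $\infty$.

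Next I would note that $T_0$ and $T_\infty$ are Frobenian sets of primes: up to the finitely many primes of $S$, each is cut out by a union of conjugacy classes in a suitable finite Galois group, so by the Chebotarev density theorem they have densities $\partial_0=\delta_{0,\QQ}(\pi)$ and $\partial_\infty=\delta_{\infty,\QQ}(\pi)$, both strictly positive since the identity fixes every component of every fibre. Consequently each primitive pair $(a_0,a_1)$ in the set
\[
\Sigma(B)=\left\{(a_0,a_1)\in\ZZp^2:\ |a_i|\le B,\ p\notin S\Rightarrow\big[m_0\mid v_p(a_0),\ m_\infty\mid v_p(a_1),\ p\mid a_0\Rightarrow p\in T_0,\ p\mid a_1\Rightarrow p\in T_\infty\big]\right\}
\]
gives a point of $\PP^1(\QQ)$ of height at most $B$ that is counted by $N_{\text{loc},S}(\pi,B)$, and since each such projective point has at most two primitive representatives we obtain $N_{\text{loc},S}(\pi,B)\ge\tfrac12\#\Sigma(B)$. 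Applying Proposition~\ref{prop:maincount} with $(m_0,\mathcal P_0)=(m_0,T_0)$ and $(m_1,\mathcal P_1)=(m_\infty,T_\infty)$ then gives
\[
\#\Sigma(B)\sim c_\pi\,\frac{B^{1/m_0+1/m_\infty}}{(\log B)^{2-\partial_0-\partial_\infty}},\qquad c_\pi>0.
\]
Finally I would observe that $2-\partial_0-\partial_\infty=\Delta(\pi)$, because every closed point of $\PP^1$ other than $0$ and $\infty$ has split fibre and hence contributes $1-\delta_D(\pi)=0$ to the sum \eqref{eq:Delta}, leaving only $(1-\delta_{0,\QQ}(\pi))+(1-\delta_{\infty,\QQ}(\pi))$. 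Combining the last two displays yields the claimed bound.

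I expect the main obstacle to be the translation in the second step: checking that the elementary divisibility and Frobenius conditions on $(a_0,a_1)$ genuinely force everywhere local solubility away from $S$. This rests on the sufficiency half of Corollary~\ref{cor:onlysplitoverdegree1}, hence on the logarithmic Hensel lemma behind Theorem~\ref{thm:lifting}, together with the bookkeeping that all enlargements of $S$ involved depend only on $\pi$ and not on the counted point. Once this is in place the rest is a direct appeal to Proposition~\ref{prop:maincount} and needs no further analytic input; note that, unlike in Theorem~\ref{t:3}, one obtains only a lower bound here because the combinatorial solubility condition at $0$ and $\infty$ may in general be met through positive combinations of several component multiplicities that the crude family $\Sigma(B)$ does not see.
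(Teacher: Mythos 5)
Your proof is correct and follows essentially the same route as the paper's: apply Corollary~\ref{cor:onlysplitoverdegree1} after passing to an sncd model, restrict to the crude sufficient condition that $m_i\mid v_p(a_i)$ and every prime dividing $a_i$ lies in the Frobenian set $T_i$ of primes whose Frobenius fixes a component of $S_{D_i}$, then invoke Proposition~\ref{prop:maincount} with $\mathcal P_i = T_i$ of density $\delta_{D_i}(\pi)$ and identify the resulting exponent $2-\partial_0-\partial_\infty$ with $\Delta(\pi)$. Your write-up is somewhat more careful than the paper's (you make explicit that the Frobenius condition is only needed at primes dividing $a_i$ and that the enlargements of $S$ depend only on $\pi$), but it reflects what the paper's proof is doing.
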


\begin{proof}
We begin by 
using Corollary~\ref{cor:onlysplitoverdegree1} to 
give  explicit conditions for local solubility away from $S$, after passing to a sncd model $X' \to \PP^1$. This leads to the conclusion that $N_{\text{loc},S}(\pi,B)$
is equal to the number of $x=(x_0 \colon x_1) \in \PP^1(\QQ)$ with $H(x) \leq B$, such that 
for each $i\in \{0,1\}$ and every  $p \not \in S$, 
$\frob_p$ fixes a collection of intersecting components
 $Z_j$ of  $X'_{D_i}$ such that $v_p(x_i) \in \langle m(Z_j)\rangle_{\NN}$, 
 where $D_i = V(x_i)$.
The following is clearly a sufficient condition for the fibre over $x$ to have $\mathbb Q_p$-point: for all $i$, the Frobenius $\frob_p$ fixes a component of $Z$ of minimimal multiplicity in $X'_{D_i}$, and $m(Z) \mid v_p(x_i)$.
The density $\partial_i$ of rational primes $p$ for which 
$\frob_p$  fixes an element of  $S_{D_i}$ is equal to $\delta_{D_i}(\pi)=\delta_{D_i,\kappa(D_i)}(\pi)$, in the notation of \eqref{eq:dD}.  Hence the statement of the theorem now follows from Proposition~\ref{prop:maincount} and \eqref{eq:Delta}.
\end{proof}

\subsection{Halphen surfaces with one non-split fibre}

Generically, a Halphen surface has no other non-split fibre apart from the multiple one. Even in these cases the counting problem still depends on the Galois action on the components of the multiple fibres, and how these components intersect. We proceed to  record some results which illustrate this phenomenon, in the course of which it  will be convenient to keep in mind the notation 
\eqref{eq:CS}.


We begin with the following result, which agrees with Conjecture \ref{con2}, since $\deg\partial_\pi=1-\frac{1}{m}$ and $\Delta(\pi)=0$.

\begin{thm}\label{t:TWO}
Let $X \to \mathbb P^1$ be a Halphen surface with a single non-split fibre over $0$, that is the fibre of multiplicity $m$. Suppose that this fibre has a geometric component fixed by $\Gal(\bar{\QQ}/\QQ)$. Then there exists a finite set $S$ such that
\[
N_{\text{loc},S}(\pi,B) \sim c_{\pi,S} B^{1+\frac1m},
\]
where
$$
c_{\pi,S}=
\frac{12c_S(1+\frac{1}{m})}{\pi^2c_S(\frac{1}{m})}
\prod_{p\in  S}\left(1+\frac{1}{p}\right)^{-1}.
$$
\end{thm}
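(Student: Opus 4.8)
The plan is to reduce the statement, via Corollary~\ref{cor:onlysplitoverdegree1}, to a single instance of Proposition~\ref{prop:maincount}, and then to evaluate the resulting constant.

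First I would pass to an sncd model $X'\to\PP^1$ of the non-split fibres of $\pi$, which changes $N_{\text{loc},S}(\pi,B)$ by only $O(1)$, and enlarge $S$ to a finite set of places (including the real place) so that the hypotheses of Corollary~\ref{cor:onlysplitoverdegree1} are satisfied; this is harmless, since the theorem only asserts the existence of a suitable $S$ (and the claimed constant $c_{\pi,S}$ is itself parametrised by $S$), while $N_{\text{loc},S}$ imposes no condition at places of $S$. The only non-split fibre of $\pi$ lies over the $\QQ$-rational point $0=V(x_0)\in(\PP^1)^{(1)}$; for every other closed point $V(h)$ the fibre is split, so the condition in Corollary~\ref{cor:onlysplitoverdegree1} is vacuous there. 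For the fibre over $0$: as $X$ is a rational elliptic surface, its unique multiple fibre has Kodaira type ${}_m I_b$, so \emph{all} geometric components of $X_0$ have multiplicity exactly $m$, and by hypothesis one of them is $\Gal(\bar\QQ/\QQ)$-invariant, hence defined over $\QQ$ and fixed by every $\frob_p$. Consequently the numerical semigroup generated by the multiplicities of the $\frob_p$-fixed components of $X_0$ is $m\NN$ for every $p\notin S$, and Corollary~\ref{cor:onlysplitoverdegree1} tells us that $X_Q(\QQ_p)\neq\emptyset$ for all $p\notin S$ precisely when $m\mid v_p(x_0)$ for all $p\notin S$, where $Q=(x_0:x_1)$. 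Representing each point of $\PP^1(\QQ)$ by the two primitive integer vectors realising it, this gives
\[
N_{\text{loc},S}(\pi,B)=\tfrac12\,\#\bigl\{(a_0,a_1)\in\ZZp^2:\ |a_i|\le B,\ p\notin S\Rightarrow m\mid v_p(a_0)\bigr\}+O(1).
\]

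Next I would recognise the set on the right as precisely the one counted in Proposition~\ref{prop:maincount} with $m_0=m$, $m_1=1$, and $\mathcal P_0=\mathcal P_1$ the set of all rational primes — a Frobenian set of density $\partial_0=\partial_1=1$ (take $K=\QQ$). Since $2-\partial_0-\partial_1=0$, Proposition~\ref{prop:maincount} yields $N_{\text{loc},S}(\pi,B)\sim\tfrac12\,c_{m_i,\mathcal P_i,S}\,B^{1+1/m}$, and it remains to verify that $\tfrac12 c_{m_i,\mathcal P_i,S}=c_{\pi,S}$. Plugging $\partial_i=1$ into the formula for $c_{m_i,\mathcal P_i,S}$: the Gamma factors and the powers $m_i^{1-\partial_i}$ are all $1$, so the first factor is $4$; the product over $p\in\mathcal P_0\cap\mathcal P_1$, $p\notin S$, is $\zeta(2)^{-1}\prod_{p\in S}(1-p^{-2})^{-1}=\tfrac{6}{\pi^2}\prod_{p\in S}(1-p^{-2})^{-1}$; and, the complement of each $\mathcal P_i$ being empty and $\partial_i=1$, each of the two triple products collapses to $\prod_{p\in S}(1-1/p)=c_S(1)$. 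Collecting the factors and using $c_S(1)\prod_{p\in S}(1-p^{-2})^{-1}=\prod_{p\in S}(1+1/p)^{-1}$ gives $\tfrac12 c_{m_i,\mathcal P_i,S}=\tfrac{12}{\pi^2}\cdot\frac{c_S(1+1/m)}{c_S(1/m)}\prod_{p\in S}(1+1/p)^{-1}=c_{\pi,S}$.

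The one genuinely delicate point is the reduction carried out in the second paragraph: one must be sure that the local-solubility condition at $0$ is the plain divisibility $m\mid v_p(x_0)$ and that there is no $\log$-loss. This rests on two geometric facts — the Kodaira structure of the multiple fibre of a rational elliptic surface, namely that every component of a multiple fibre ${}_m I_b$ has multiplicity $m$ (so the relevant semigroup is $m\NN$), and the hypothesis that some geometric component of the multiple fibre is $\QQ$-rational, which forces $\frob_p$ to fix a component of minimal multiplicity for every $p\notin S$, i.e.\ $\delta_{0,\QQ}(\pi)=1$ and hence $\Delta(\pi)=0$. Everything else — the passage to a primitive-vector count and the identification and evaluation of the constant via Proposition~\ref{prop:maincount} — is routine bookkeeping.
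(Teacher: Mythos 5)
Your proposal is correct and follows essentially the same route as the paper's proof: invoke Corollary~\ref{cor:onlysplitoverdegree1} to express $N_{\text{loc},S}(\pi,B)$ as a count of primitive pairs $(a,b)$ with $m \mid v_p(a)$ away from $S$, then apply Proposition~\ref{prop:maincount} with $m_0=m$, $m_1=1$ and $\mathcal P_0=\mathcal P_1$ equal to the set of all primes, and simplify the constant. The only difference is that you spell out the geometric justification — the Kodaira fact that a multiple fibre of an elliptic surface is of type ${}_m I_b$ so that every component of the sncd model of $X_0$ has multiplicity a multiple of $m$ and (by hypothesis) at least one $\QQ$-rational component has multiplicity exactly $m$ — whereas the paper treats this as implicit in the invocation of the corollary. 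Your constant evaluation and the reduction $c_S(1)\prod_{p\in S}(1-p^{-2})^{-1}=\prod_{p\in S}(1+1/p)^{-1}$ match the paper's.
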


\begin{proof}
By Corollary~\ref{cor:onlysplitoverdegree1} we see that there is a finite set of places $S$ such that
\[
N_{\text{loc},S}(\pi,B) = \frac12 \#\{ (a,b) \in \ZZp^2\colon |a|,|b|\leq B, ~p \not\in S \Rightarrow m \mid v_p(a)\}.
\]
We may apply 
Proposition~\ref{prop:maincount} with $m_0=m$ and $m_1=1$, and with $\mathcal{P}_0=\mathcal{P}_1$ equal to the full set of rational primes. In particular $\partial_0=\partial_1=1$ and it follows that 
$
N_{\text{loc},S}(\pi,B) \sim 
c_{\pi,S}B^{1+\frac{1}{m}}$,
as $B\to \infty$,  where
\begin{align*}
c_{\pi,S}=~&
\frac{2c_S(1+\frac{1}{m})}{c_S(1)c_S(\frac{1}{m})}
\prod_{\substack{p\not \in S}}\left(1-\frac{1}{p^2}\right)
\prod_{p\in  S}\left(1-\frac{1}{p}\right)^2,
\end{align*}
in the notation of  \eqref{eq:CS}.
The statement  easily follows on simplifying the expression for the constant. 
\end{proof}

The following two result agree with Conjecture \ref{con2}, since in both cases we have $\deg\partial_\pi=1-\frac{1}{m}$ and $\Delta(\pi)=\frac{2}{3}$.
Moreover, in these two examples, we have multiple fibres which do not have a geometrically integral component. This demonstrates the need to define \eqref{eq:dD} in terms of $S_D$, for each divisor $D$, which allows us to work with the Galois action on the components of a fibre of minimum multiplicity.

\begin{thm}\label{t:THREE}
Let $X \to \mathbb P^1$ be a Halphen surface with a single non-split fibre over $0$, that is the fibre of multiplicity $m$. Suppose that this fibre consists of three conjugate lines 
split by a cubic Galois extension $K/\QQ$
that do \textbf{not} all meet in a point. Then there exists a finite set $S$ such that
\[
N_{\text{loc},S}(\pi,B) \sim c_{\pi,S} \frac{B^{1+\frac1m}}{(\log B)^{\frac{2}{3}}},
\]
where
\begin{align*}
c_{\pi,S}=~&
\frac{2m^{\frac{2}{3}}
c_S(1)^{\frac{1}{3}}c_S(1+\frac{1}{m})}{\Gamma(\frac{1}{3})c_S(\frac{1}{m})}
\prod_{\substack{p\in \mathcal{P}_K\\ p\not \in S}}
\hspace{-0.1cm}
\left(1+\frac{1}{p}\right)
\left(1-\frac{1}{p}\right)^{\frac{1}{3}}
\hspace{-0.1cm}
\prod_{\substack{p\not \in \mathcal{P}_K\\ p\not \in S}}\left(1-\frac{1}{p}\right)^{\frac{1}{3}}.
\end{align*}
\end{thm}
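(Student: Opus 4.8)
The plan is to reduce the problem to a count of the type handled by Proposition~\ref{prop:maincount}, following the same strategy as in the proof of Theorem~\ref{t:TWO}; the only genuinely new ingredient is the identification of the Frobenian set of primes attached to the multiple fibre.

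First I would invoke Corollary~\ref{cor:onlysplitoverdegree1}. The hypothesis that the three conjugate lines $L_1\cup L_2\cup L_3$ making up the fibre over $0$ do not all pass through one point means that this reduced fibre is a strict normal crossings divisor, namely a ``triangle'' with vertices $L_i\cap L_j$, so the set-up of Section~\ref{s:sparsity} applies directly (no further blow-up is needed) and all non-split fibres lie over the $\QQ$-rational point $0=V(x_0)$. Hence there is a finite set $S$ of primes so that, writing a point of $\PP^1(\QQ)$ as $(a:b)$ with $(a,b)\in\ZZp^2$, the fibre $X_{(a:b)}$ is everywhere locally soluble away from $S$ precisely when, for every prime $p\notin S$ dividing $a$, the fibre over $0$ has intersecting geometric components of multiplicity $m_i$, each fixed by $\frob_p$, whose positive linear span contains $v_p(a)$. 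At every closed point $\neq 0$ the fibre is split, so no condition arises there, and the condition at $\infty$ is vacuous.

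The main step is to unwind this condition. Each of $L_1,L_2,L_3$ has multiplicity $m$, so the positive span of the multiplicities of any non-empty subset of them is $m\NN$. The absolute Galois group acts on $\{L_1,L_2,L_3\}$, and hence on the three vertices, through $\Gal(K/\QQ)\cong\ZZ/3\ZZ$, transitively. If $\frob_p$ is non-trivial in $\Gal(K/\QQ)$ --- equivalently $p$ is inert in $K$ --- it permutes the $L_i$ cyclically and likewise the vertices $L_i\cap L_j$, so it fixes no line and no vertex: there are then no intersecting $\frob_p$-fixed components, and local solubility fails whenever $v_p(a)>0$. If $\frob_p$ is trivial --- equivalently $p$ splits completely in $K$, i.e.\ $p\in\mathcal P_K$, a Frobenian set of density $\tfrac13$ --- then $\frob_p$ fixes every $L_i$ and the condition reduces to $m\mid v_p(a)$. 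Combining these (and enlarging $S$ as permitted by Corollary~\ref{cor:onlysplitoverdegree1}) one obtains
\[ N_{\text{loc},S}(\pi,B)=\tfrac12\,\#\Big\{(a,b)\in\ZZp^2:\ |a|,|b|\le B,\ p\notin S\Rightarrow\big[m\mid v_p(a)\text{ and }(p\mid a\Rightarrow p\in\mathcal P_K)\big]\Big\}+O(1), \]
the factor $\tfrac12$ accounting for the sign ambiguity of primitive vectors and the $O(1)$ absorbing the points $(1:0)$ and $(0:1)$.

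Finally I would apply Proposition~\ref{prop:maincount} with $m_0=m$, $m_1=1$, $\mathcal P_0=\mathcal P_K$ of density $\partial_0=\tfrac13$, and $\mathcal P_1$ the set of all rational primes ($\partial_1=1$). Since $\tfrac1{m_0}+\tfrac1{m_1}=1+\tfrac1m$ and $2-\partial_0-\partial_1=\tfrac23$, this yields $N_{\text{loc},S}(\pi,B)\sim\tfrac12\,c_{m_i,\mathcal P_i,S}\,B^{1+1/m}/(\log B)^{2/3}$, and it remains to simplify $\tfrac12 c_{m_i,\mathcal P_i,S}$ to the stated $c_{\pi,S}$: when $\partial_1=1$ the products over $\mathcal P_1$ collapse, the leftover $c_S(1)$ together with the product over $\mathcal P_1\cap S$ recombines into $c_S(1)^{1/3}$, and applying $1-p^{-2}=(1-p^{-1})(1+p^{-1})$ to the product over $\mathcal P_K\setminus S$ puts the constant into the claimed shape. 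The one real subtlety is the geometric step: it is exactly the non-concurrency hypothesis that forces inert primes to destroy local solubility, since had the three lines met in a point the exceptional curve of the resolving blow-up would be a $\QQ$-rational geometrically integral component of the fibre over $0$, and the analysis --- together with the leading constant --- would be different.
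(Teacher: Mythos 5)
Your proof is correct and follows essentially the same route as the paper: Corollary~\ref{cor:onlysplitoverdegree1} reduces local solubility to the condition $m\mid v_p(a)$ together with $p\in\mathcal P_K$ whenever $p\mid a$, and Proposition~\ref{prop:maincount} with $m_0=m$, $m_1=1$, $\mathcal P_0=\mathcal P_K$, $\mathcal P_1$ all primes then gives the asymptotic. The only difference is that you spell out the geometric reasoning behind the Frobenian condition (inert primes fix no component, split primes fix all three) which the paper leaves implicit; note only the minor slip that the positive span of a \emph{pair} of lines of multiplicity $m$ is $\{2m,3m,\dots\}$ rather than all of $m\NN$, though this does not affect the argument since the singleton subsets already yield the needed condition.
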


\begin{proof}
Suppose that the three conjugate lines are split by the cubic Galois extension $K/\QQ$. By Corollary~\ref{cor:onlysplitoverdegree1} we see that there is a finite set of places $S$ such that
$N_{\text{loc},S}(\pi,B) $ is equal to
\[
 \frac12 \#\left\{ (a,b) \in \ZZp^2\colon 
\begin{array}{l}
|a|,|b|\leq B\\
\big[
p \not\in S \text{ and } p \mid a\big] \Rightarrow \big[m \mid v_p(a) \text{ and } p \in \mathcal P_K \big]
\end{array}
\right\},
\]
where $\mathcal{P}_K$ is the set of rational primes $p$ that are unramified in $K$ and split completely. 
We may apply 
Proposition~\ref{prop:maincount} with $m_0=m$ and $m_1=1$, and with $\mathcal{P}_0=\mathcal{P}_K$ and $\mathcal{P}_1$  equal to the full set of rational primes. In particular $\partial_0=1/3$ and $\partial_1=1$.  
It follows that 
$$
N_{\text{loc},S}(\pi,B) \sim c_{\pi,S} \frac{B^{1+\frac1m}}{(\log B)^{\frac{2}{3}}}
$$
where
\begin{align*}
c_{\pi,S}=~&
\frac{2m^{\frac{2}{3}}}
{\Gamma(\frac{1}{3})}
\cdot 
\frac{c_S(1+\frac{1}{m})}{c_S(\frac{1}{m})}
\prod_{\substack{p\in \mathcal{P}_K\\ p\not \in S}}\left(1-\frac{1}{p^2}\right)
\\
&\quad \times 
\prod_{p\in  \mathcal{P}_K\cap S}\left(1-\frac{1}{p}\right)
\prod_{p\in \mathcal{P}_K}\left(1-\frac{1}{p}\right)^{-\frac{2}{3}}
\prod_{p\not \in \mathcal{P}_K}\left(1-\frac{1}{p}\right)^{\frac{1}{3}}.
\end{align*}
The statement of the proposition  follows on simplifying this expression.
\end{proof}

The next result agrees with Conjecture \ref{con2}, since $\deg\partial_\pi=1-\frac{1}{m}$ and $\Delta(\pi)=\frac{2}{3}$.

\begin{thm}\label{t:FOUR}
Let $X \to \mathbb P^1$ be a Halphen surface with a single non-split fibre over $0$, that is the fibre of multiplicity $m$. Suppose that this fibre consists of three conjugate lines
split by a cubic Galois extension $K/\QQ$
 that \textbf{do} meet in a point. Then there exists a finite set $S$ such that
\[
\frac{B^{1+\frac1m}}{(\log B)^{\frac{2}{3}}}\ll 
N_{\text{loc},S}(\pi,B) \ll  \frac{B^{1+\frac1m}}{(\log B)^{\frac{2}{3}}}.
\]
\end{thm}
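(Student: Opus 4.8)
The plan is to follow the proof of Theorem~\ref{t:THREE}, while keeping careful track of the effect of the non-normal-crossing point of the fibre over $0$ on the model used in the sparsity criterion.

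\emph{A sncd model.} The reduced fibre $\Phi\subset X$ over $0$ is a union $L_1\cup L_2\cup L_3$ of three lines, transitively permuted by $\Gal(\bar\QQ/\QQ)$ through $\Gal(K/\QQ)\cong\ZZ/3\ZZ$, each $L_i$ occurring with multiplicity $m$ in $X_0$. The three lines meet in a single point $P$; since $P$ is the unique common point of a Galois-stable set, it is fixed by $\Gal(\bar\QQ/\QQ)$, whence $P\in X(\QQ)$. First I would blow up $P$ to obtain $\rho\colon X'\to X$ over $\QQ$, with exceptional curve $E_0$ and strict transforms $\widetilde L_1,\widetilde L_2,\widetilde L_3$. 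The reduced fibre of $\pi'=\pi\circ\rho$ over $0$ is then $\widetilde L_1\cup\widetilde L_2\cup\widetilde L_3\cup E_0$, which is sncd: the $\widetilde L_i$ are pairwise disjoint and each meets $E_0$ transversally in a single point, these three points being conjugate over $K$. Locally at $P$ the fibre $X_0$ has equation $(\ell_1\ell_2\ell_3)^m$ up to a unit, so $E_0$ appears in $X'_0$ with multiplicity $3m$, whereas the $\widetilde L_i$ keep multiplicity $m$. The key point is that $E_0$ is defined over $\QQ$, so $\frob_p$ fixes $E_0$ for \emph{every} $p$. Since $\rho$ is an isomorphism away from $X_0$, we have $N_{\text{loc},S}(\pi',B)=N_{\text{loc},S}(\pi,B)+O(1)$, and $X'\to\PP^1$ is again a standard fibration whose only non-split fibre lies above the $\QQ$-point $0$.

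\emph{The arithmetic condition.} Next I would enlarge $S$ so that the constructions of Section~\ref{s:sparsity} apply to the model $X'\to\PP^1$ and all primes ramified in $K$ lie in $S$. Writing $0=(0:1)$ and taking $(x_0,x_1)\in\ZZ^2$ coprime with $\max(|x_0|,|x_1|)\le B$, Corollary~\ref{cor:onlysplitoverdegree1} shows that for $p\notin S$ the fibre of $\pi'$ over $(x_0:x_1)$ is $\QQ_p$-soluble exactly when $p\nmid x_0$, or else $p\mid x_0$ and $v_p(x_0)$ lies in the positive linear span of the multiplicities of some connected $\frob_p$-fixed sub-configuration of $X'_0$ carrying an $\mathbb F_p$-point. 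The open part of $E_0$ always carries such a point, so $3m\mid v_p(x_0)$ always suffices; if $p\in\mathcal P_K$, i.e. $p$ splits completely in $K$, then $\frob_p$ fixes every $\widetilde L_i$ and already $m\mid v_p(x_0)$ suffices; while if $p\notin\mathcal P_K$ then $\frob_p$ cyclically permutes the $\widetilde L_i$ together with the points $\widetilde L_i\cap E_0$, so the open part of $E_0$ is the only $\frob_p$-fixed stratum with an $\mathbb F_p$-point, and $3m\mid v_p(x_0)$ is \emph{necessary}. Hence, up to $O(1)$, $N_{\text{loc},S}(\pi,B)$ equals one half the number of coprime $(x_0,x_1)\in\ZZ^2$ with $|x_0|,|x_1|\le B$ such that every prime $p\notin S$ dividing $x_0$ satisfies $m\mid v_p(x_0)$, and satisfies $3m\mid v_p(x_0)$ whenever moreover $p\notin\mathcal P_K$.

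\emph{The two bounds.} For the lower bound I would keep only the pairs with $x_0=u^m$, where $u$ runs over positive integers all of whose prime factors lie in $\mathcal P_K\setminus S$, and $x_1$ is arbitrary with $|x_1|\le B$: every such $x_0$ is admissible, since each prime dividing it splits in $K$ and contributes a multiple of $m$ to $v_p(x_0)$. Proposition~\ref{prop:maincount}, applied with $m_0=m$, $m_1=1$, $\mathcal P_0=\mathcal P_K$ (Frobenian of density $\tfrac13$) and $\mathcal P_1$ the set of all primes (density $1$), then gives a lower bound of order $B^{1+1/m}/(\log B)^{2-\frac13-1}=B^{1+1/m}/(\log B)^{2/3}$. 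For the upper bound I would parametrise an admissible $x_0$ as $\pm s\,u^m w^{3m}$, with $s$ supported on $S$, $u$ on $\mathcal P_K\setminus S$, $w$ on the primes outside $S\cup\mathcal P_K$, and $\gcd(u,w)=1$ --- this records precisely the condition above. Bounding trivially the $O(B)$ choices of $x_1$, using the estimate $\#\{u\le Y:\ p\mid u\Rightarrow p\in\mathcal P_K\}\ll Y/(\log Y)^{2/3}$ from Lemma~\ref{lem:M(x)} for the sum over $u$, invoking the convergence of $\sum_w w^{-3}$ and of $\sum_s s^{-1/m}$, and treating the ranges $s>\sqrt B$ and $w^{3m}>\sqrt B$ by a crude bound, I would obtain $N_{\text{loc},S}(\pi,B)\ll B^{1+1/m}/(\log B)^{2/3}$.

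\emph{Main obstacle.} The heart of the matter is the first two steps: one must recognise that resolving the triple point produces a $\QQ$-rational fibre component $E_0$ of multiplicity $3m$, and hence --- in sharp contrast with the triangle configuration of Theorem~\ref{t:THREE}, where a prime dividing $x_0$ that is non-split in $K$ destroys local solubility outright --- here such a prime is harmless provided $3m\mid v_p(x_0)$. It is this additional (and, in the parameter $w$, summable) family of admissible $x_0$ that accounts for our stating matching upper and lower bounds here in place of a clean asymptotic; once the model and its divisibility conditions are identified, the remaining analytic input mirrors the proof of Theorem~\ref{t:THREE}.
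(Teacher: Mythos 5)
Your proof is correct and recovers, with full detail, what the paper dispatches in two lines (``the upper bound follows from Theorem~\ref{t:1}; the lower bound was proven in Theorem~\ref{t:ONE}'') together with the explanatory remark following the theorem. The sncd model you construct --- blowing up the Galois-fixed triple point to produce a $\QQ$-rational exceptional curve $E_0$ of multiplicity $3m$ alongside the three conjugate strict transforms of multiplicity $m$ --- and the resulting solubility condition ($p\nmid x_0$, or $3m\mid v_p(x_0)$, or $m\mid v_p(x_0)$ with $p\in\mathcal P_K$) are precisely what the paper's remark describes; and your lower bound, restricting to $x_0$ with all prime factors in $\mathcal P_K\setminus S$ and feeding this into Proposition~\ref{prop:maincount} with $(m_0,m_1,\mathcal P_0,\mathcal P_1)=(m,1,\mathcal P_K,\text{all primes})$, mirrors the proof of Theorem~\ref{t:ONE}. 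Where you genuinely diverge is the upper bound: the paper simply invokes the large-sieve bound of Theorem~\ref{t:1}, whereas you exploit the fact that the non-split fibre lies over a $\QQ$-point, so Corollary~\ref{cor:onlysplitoverdegree1} gives an exact characterisation, parametrise the admissible $x_0=\pm s\,u^m w^{3m}$ with $s$ supported on $S$, $u$ on $\mathcal P_K\setminus S$, and $w$ off $S\cup\mathcal P_K$, and sum directly: Lemma~\ref{lem:M(x)} supplies the factor $(\log B)^{-2/3}$ from the $u$-sum, while $\sum_w w^{-3}$ converges and the $s$-sum is a finite Euler product. This direct count is more elementary and self-contained in the present case, but it rests on the iff in Corollary~\ref{cor:onlysplitoverdegree1}; if the non-split fibre lay over a closed point of higher degree, only the one-sided Corollary~\ref{cor:sparsity} would be available and the large-sieve route of Theorem~\ref{t:1} would be required. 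Both approaches give the stated bounds.
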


\begin{proof}
The upper bound follows from Theorem \ref{t:1}.
The lower bound was proven in Theorem~\ref{t:ONE}.
\end{proof}

Theorem \ref{t:FOUR} illustrates the need for the non-split fibres to be sncd; the counting problem for this setting is
\[
p \not\in S, p \mid a \Rightarrow \Big[\big(3m \mid v_p(a)\big) \text{ or } \big(m \mid v_p(a) \text{ and } p \in \mathcal P_K\big)\Big].
\]
The condition $3m \mid v_p(a)$ comes from a Galois fixed component of multiplicity $3m$ on the multiple fibre of the sncd-model of $X$. However, no such component exists on the multiple fibre of $X$ itself.

\subsection{Halphen surfaces with two non-split fibres}

In practice, it can be difficult to  construct Halphen surfaces with more than one non-split fibre. We present two such examples, both of which  verify Conjecture \ref{con2}.

\begin{thm}\label{t:SIX}
There exists a Halphen surface $X \to \mathbb P^1$ of degree $2$ with two non-split fibres: the multiple fibre is geometrically irreducible and has multiplicity $2$, and the other is a sncd divisor of Kodaira classification $I_6$ split by a cubic Galois extension $K/\QQ$. Moreover, there exists a finite set of places $S$, and an explicit constant $c_{\pi,S}>0$ such that
\[
N_{\text{loc},S}(\pi,B) \sim  c_{\pi,S}\frac{B^{1+\frac12}}{(\log B)^{\frac{2}{3}}}.
\]
\end{thm}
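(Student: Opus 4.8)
The plan is to prove the statement in two stages: \emph{construct} the surface, then carry out the \emph{point count}; the construction is the real content and the step I expect to be the main obstacle.

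For the construction, I would start from an explicit Jacobian rational elliptic surface $J\to\PP^1$ over $\QQ$ — written down by a Weierstrass equation over $\QQ(t)$ and checked with Tate's algorithm — whose only non-split fibre is a \emph{split} $I_6$ over $\infty$, which has good reduction at $0$, and whose remaining fibres are nodal ($I_1$, hence irrelevant for $\Delta$). I would then take $X$ to be the order-$2$ Halphen surface obtained from $J$ by a twisting class that is ramified of order $2$ at $0$ — producing the unique multiple fibre, of multiplicity $2$, with reduced structure a smooth genus one curve (already strict normal crossings) — and that is twisted at $\infty$ by a cyclic cubic field $K$, so that $\Gal(K/\QQ)\cong\ZZ/3\ZZ$ permutes the six components of the (still strict normal crossings) $I_6$ fibre by an order-$3$ rotation, i.e.\ in two Galois orbits of size three; equivalently, one can realise $X$ directly as the blow-up of $\PP^2$ at the nine base points of a suitably chosen Halphen pencil of order $2$ and verify the fibre types with Tate's algorithm. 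The hard part is doing this over $\QQ$ with all constraints at once: that $K$ is genuinely Galois (not a general cubic), that the $I_6$ carries the full order-$3$ Galois action on its components and not a smaller one, and that the multiple fibre stays geometrically irreducible. This is what forces the specific choice of twisting data, resp.\ of the nine base points, and is where the numerical input of the theorem gets pinned down.

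Granting such an $X$, the count is routine. I would enlarge $S$ so that the constructions of Section~\ref{s:sparsity} apply and $S$ contains the primes ramifying in $K$; since both non-split fibres are already strict normal crossings, Corollary~\ref{cor:onlysplitoverdegree1} applies with no further model. For $x=(a:b)\in\PP^1(\QQ)$ with $(a,b)\in\ZZp^2$ and $p\notin S$: at $0$ the unique, $\Gal(\bar\QQ/\QQ)$-stable, multiplicity-$2$ geometric component forces $v_p(a)\in\langle 2\rangle_{\NN}$, i.e.\ $2\mid v_p(a)$; at $\infty$ the six multiplicity-one components of the $I_6$ are permuted freely by $\ZZ/3\ZZ=\Gal(K/\QQ)$, so $\frob_p$ fixes a component (equivalently, intersecting components) exactly when $p$ splits completely in $K$, i.e.\ lies in the Frobenian set $\mathcal P_K$ of density $\tfrac13$, giving the condition $p\mid b\Rightarrow p\in\mathcal P_K$; the split fibres impose nothing. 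Thus Corollary~\ref{cor:onlysplitoverdegree1} yields
\[
N_{\text{loc},S}(\pi,B)=\frac12\,\#\left\{(a,b)\in\ZZp^2:\
\begin{array}{l}
|a|,|b|\leq B,\\
p\notin S\Rightarrow 2\mid v_p(a),\\
p\notin S \text{ and } p\mid b\ \Rightarrow\ p\in\mathcal P_K
\end{array}
\right\},
\]
which is exactly the quantity of Proposition~\ref{prop:maincount} with $m_0=2$, $m_1=1$, $\mathcal P_0$ the set of all rational primes ($\partial_0=1$) and $\mathcal P_1=\mathcal P_K$ ($\partial_1=\tfrac13$). Applying it gives
\[
N_{\text{loc},S}(\pi,B)\sim \tfrac12 c_{m_i,\mathcal P_i,S}\,\frac{B^{\frac12+1}}{(\log B)^{2-1-\frac13}}=c_{\pi,S}\,\frac{B^{1+\frac12}}{(\log B)^{\frac23}},
\]
and simplifying $c_{\pi,S}=\tfrac12 c_{m_i,\mathcal P_i,S}$ exactly as in the proofs of Theorems~\ref{t:TWO}--\ref{t:THREE} produces
\[
c_{\pi,S}=\frac{2}{\Gamma(\tfrac13)}\cdot\frac{c_S(\tfrac32)}{c_S(\tfrac12)}
\prod_{\substack{p\in\mathcal P_K\\ p\notin S}}\!\left(1-\frac{1}{p^2}\right)
\prod_{p\in\mathcal P_K\cap S}\!\left(1-\frac1p\right)
\prod_{p\in\mathcal P_K}\!\left(1-\frac1p\right)^{-2/3}
\prod_{p\notin\mathcal P_K}\!\left(1-\frac1p\right)^{1/3}.
\]
As a sanity check this agrees with Conjecture~\ref{con2}: $\deg\partial_\pi=(1-\tfrac12)+(1-1)=\tfrac12$ gives the exponent $2-\deg\partial_\pi=\tfrac32$ of $B$, and $\delta_0(\pi)=1$ together with $\delta_\infty(\pi)=\tfrac13$ — the latter by Theorem~\ref{t:densities}, since the $I_6$ components form two orbits of three over the cubic Galois field $K$ — gives $\Delta(\pi)=(1-1)+(1-\tfrac13)=\tfrac23$, the exponent of $\log B$.
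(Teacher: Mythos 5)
Your treatment of the point count is correct and is essentially what the paper does: apply Corollary~\ref{cor:onlysplitoverdegree1} (both non-split fibres being sncd), read off the conditions $2\mid v_p(a)$ at $0$ and $p\mid b\Rightarrow p\in\mathcal P_K$ at $\infty$, and invoke Proposition~\ref{prop:maincount} with $m_0=2$, $m_1=1$, $\partial_0=1$, $\partial_1=\tfrac13$. Your sanity check via $\deg\partial_\pi=\tfrac12$ and $\Delta(\pi)=\tfrac23$ is also right.

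The genuine gap is the construction, which you yourself flag as the main obstacle but do not resolve. The first route you sketch — take a Jacobian rational elliptic surface $J$ with a \emph{split} $I_6$ and then ``twist at $\infty$ by a cyclic cubic field $K$'' — is not a well-defined operation for what you need. A Halphen surface of index $2$ is a torsor under $J$ and is isomorphic to $J$ away from the multiple fibre, so it inherits the Galois action on the components of the $I_6$ from $J$; there is no standard twisting of the Halphen datum (an order-$2$ class on $J$) that changes that component action from trivial to an order-$3$ permutation, and twisting $J$ itself by a cubic character is not a construction that exists for elliptic fibrations. The fallback you mention — ``realise $X$ directly as the blow-up of $\PP^2$ at nine suitably chosen base points'' — is indeed what is needed, but the whole content of the theorem's existence claim is to produce such a choice. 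The paper does this concretely: fix a cyclic cubic $K/\QQ$, pick two $\Gal(K/\QQ)$-orbits $\{P_i\}$, $\{Q_i\}$ of size $3$ in $\PP^2(K)$ (indexed by $i\in\ZZ/3\ZZ$), set $R_i$ to be the intersection of the lines $P_{i+1}P_{i+2}$ and $Q_{i+1}Q_{i+2}$ (so $\{R_i\}$ is automatically another $K$-orbit), and blow up these nine points. The double fibre is the strict transform of the unique smooth cubic through the nine points, and the $I_6$ fibre is the strict transform of the sextic formed by the six lines $P_{i+1}P_{i+2}$, $Q_{i+1}Q_{i+2}$, which $\Gal(K/\QQ)$ rotates in two cycles of length $3$; genericity ensures the nine points are in Halphen position and that no further non-split fibres appear. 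Without an explicit construction of this sort, the existence assertion in the theorem remains unproved, even though the asymptotic computation granted such an $X$ is fine.
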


\begin{proof}
Let us first fix the cyclic cubic number field $K/\QQ$. Now choose two sets of three conjugate points $P_i ,Q_i \in \mathbb P^2(K)$, indexed by $i \in \ZZ/3\ZZ$. We let $R_i$ be the intersecting point of the lines $P_{i+1}P_{i+2}$ and $Q_{i+1}Q_{i+2}$. For generic choices of $P_i$ and $Q_i$, the $R_i$ are well-defined and there is a unique smooth cubic through the nine points $P_i$, $Q_i$ and $R_i$.

We will consider $X = \Bl_{P_i,Q_i,R_i} \PP^2$. The two non-split fibres of $X$ come from the double cubic passing through these nine points, and the sextic curve which is geometrically the union of the six lines $P_{i+1}P_{i+2}$ and $Q_{i+1}Q_{i+2}$. Under blowup the first curve turns into a geometrically integral fibre of multiplicity $2$, and the other into six lines meeting in a cycle. The three lines $P_1P_2$, $P_2P_3$ and $P_3P_1$ are permuted by $\Gal(K/\QQ)$ and no longer meet on $X$.
For a generic choice of $P_i$ and $Q_i$ there will be no other non-split fibres.

Let us assume the multiple fibre lies above $0$ and the other non-split fibre over $\infty$. The fibres of $X \to \mathbb P^1$ are all sncd, so we can directly compose the counting problem to find that
\[
N_{\text{loc},S}(\pi,B)=
 \frac12 \#\left\{ (a,b) \in \ZZp^2\colon 
\begin{array}{l}
|a|,|b|\leq B\\
p \not\in S \Rightarrow 2 \mid v_p(a)\\
\big[ 
p \not\in S \text{ and } p \mid b\big] \Rightarrow p \in \mathcal P_K
\end{array}
\right\}.
\]
Such a counting problem is dealt with by Proposition \ref{prop:maincount}.
\end{proof}

\begin{thm}\label{t:SEVEN}
There exists a Halphen surface $X \to \mathbb P^1$ of degree $3$ with two non-split fibres: the multiple fibre is geometrically irreducible and has multiplicity $3$, and the other is a non-sncd divisor of Kodaira classification $I_3$ split by a cubic Galois extension $K/\QQ$. Moreover, there exists a finite set of places $S$ such that
\[
\frac{B^{1+\frac13}}{(\log B)^{\frac{2}{3}}} \ll N_{\text{loc},S}(\pi,B) \ll  \frac{B^{1+\frac13}}{(\log B)^{\frac{2}{3}}}.
\]
\end{thm}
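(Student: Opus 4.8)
The plan is to mirror the construction in Theorem~\ref{t:SIX} on the geometric side and the proofs of Theorems~\ref{t:ONE} and \ref{t:FOUR} on the arithmetic side. First I would fix a cyclic cubic field $K/\QQ$ with $\Gal(K/\QQ)=\ZZ/3\ZZ$ and choose a $\Gal(\bar\QQ/\QQ)$-stable configuration of nine points in $\PP^2$, defined over $K$, so that the pencil of degree-$9$ plane curves with triple points at them is a Halphen pencil of order $3$ whose only degenerate members are: the unique, geometrically irreducible multiple member (of multiplicity $3$), and one further non-split member which is geometrically a union of three conjugate lines permuted cyclically by $\Gal(K/\QQ)$ meeting the fibration in non-strict-normal-crossings fashion; after blowing up to obtain an sncd model $X'\to\PP^1$ this fibre resolves into a $\QQ$-rational component $E$ of multiplicity $3$ meeting three conjugate multiplicity-one components $L_1,L_2,L_3$. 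This should be achievable by a suitable degeneration of the construction underlying Theorem~\ref{t:SIX} (where one lets the two triangles collapse onto a single one). \emph{Verifying that such a configuration exists and that it has no further non-split fibres — a dimension count on the space of Halphen pencils — is the main obstacle; everything after this is a routine combination of tools already in the paper.}

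Next I would place the multiple fibre over $0$ and the other non-split fibre over $\infty$, pass to the sncd model $X'$ (which changes $N_{\text{loc},S}$ only by a bounded factor, cf.\ the reduction in \S\ref{s:sparsity}), and apply Corollary~\ref{cor:onlysplitoverdegree1} together with Theorem~\ref{thm:lifting} — legitimate since both non-split fibres lie over $\QQ$-rational points. Reading off the multiplicities of the Frobenius-fixed intersecting components (over $0$: a single $\QQ$-rational component of multiplicity $3$; over $\infty$: the $\QQ$-rational $E$ of multiplicity $3$, and, when $\frob_p$ fixes some $L_i$, i.e.\ $p\in\mathcal P_K$, also a multiplicity-one component) one obtains, for a suitable finite $S$ and up to the usual factor $\tfrac12$, that $N_{\text{loc},S}(\pi,B)$ equals the number of coprime $(a,b)\in\ZZp^2$ with $|a|,|b|\le B$ such that for every $p\notin S$,
\[
p\mid a \implies 3\mid v_p(a), \qquad p\mid b \implies \big(3\mid v_p(b)\ \text{ or }\ p\in\mathcal P_K\big),
\]
exactly the shape appearing in the discussion after Theorem~\ref{t:FOUR}, now with $m=1$ in the second condition.

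For the lower bound I would restrict to the subfamily in which $p\mid b\Rightarrow p\in\mathcal P_K$ for all $p\notin S$; this lies inside the set above, and Proposition~\ref{prop:maincount}, applied with $m_0=3$ and $\mathcal P_0$ the set of all rational primes (so $\partial_0=1$) and $m_1=1$ and $\mathcal P_1=\mathcal P_K$ (so $\partial_1=\tfrac13$, since $K/\QQ$ is cyclic cubic), yields a contribution $\gg B^{1/3+1}/(\log B)^{2-1-1/3}=B^{4/3}/(\log B)^{2/3}$, as required. For the upper bound I would invoke Theorem~\ref{t:1}: here $\pi$ has a unique multiple fibre, $2-\deg\partial_\pi=1+\tfrac13$, and $\Delta(\pi)=(1-\delta_{0,\QQ}(\pi))+(1-\delta_{\infty,\QQ}(\pi))=0+\tfrac23$, and the sieve argument underlying that theorem (via Proposition~\ref{prop:2fibres} and Corollary~\ref{cor:sparsity}, which use only necessary conditions away from $S$) bounds $N_{\text{loc},S}(\pi,B)$ from above by $\ll B^{4/3}/(\log B)^{2/3}$. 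Alternatively one can argue directly from the displayed counting problem: the number of admissible $b\le B$ is $\ll B/(\log B)^{2/3}$ on writing $b=b_S\,m\,c^3$ with $b_S$ supported on $S$, $m$ composed of primes of $\mathcal P_K$ (whose smooth numbers up to $x$ number $\ll x/(\log x)^{2/3}$) and $c$ arbitrary, while the number of admissible $a\le B$ is $\ll B^{1/3}$; multiplying gives the bound. Combining the two estimates completes the proof.
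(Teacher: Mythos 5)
Your arithmetic reasoning (invoking Theorem~\ref{t:ONE} for the lower bound and Theorem~\ref{t:1} for the upper bound, plugging $m_0=3$, $m_1=1$ and the Chebotarev density $\partial_1=\tfrac13$ into Proposition~\ref{prop:maincount}) is exactly what the paper does, and would be fine. The genuine gap is the one you yourself flag and then set aside: you never construct the Halphen surface, and that existence claim is the entire new content of Theorem~\ref{t:SEVEN}. Appealing to ``a suitable degeneration of the construction underlying Theorem~\ref{t:SIX}'' plus ``a dimension count'' is not an argument, and neither of those suggestions is obviously workable here: a degree-$3$ Halphen pencil is highly constrained (it is equivalent to choosing $9$ points whose sum on a smooth cubic is a torsion class of order exactly $3$), so a general-position or dimension-counting argument does not immediately produce the special Galois configuration you want with no extra non-split fibres. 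The paper instead builds the surface explicitly from an elliptic curve $E/\QQ$ with $E(\QQ)_{\text{tors}}\cong\ZZ/9\ZZ$, a cyclic cubic field $K$ with $\rank E(K)>\rank E(\QQ)$, and points $P_i,Q_i,R_i$ produced by the group law, using the torsion relation to exhibit the three conjugate nodal cubics whose union is the $I_3$ fibre. That explicit step is where all the work of this theorem lives, and your proposal leaves it entirely open.

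A smaller point: your description of the resolved $I_3$ fibre is not right. You say the sncd model has a single $\QQ$-rational component of multiplicity $3$ meeting three conjugate multiplicity-one components; that is the picture for three lines through a \emph{common} point (the situation of Theorem~\ref{t:FOUR}), not for a Kodaira $I_3$. For a triangle of three conjugate components, the non-sncd feature is that the $\QQ$-irreducible reduced fibre has three conjugate nodes; blowing these up produces three \emph{conjugate} exceptional curves of multiplicity $2$, not a single $\QQ$-rational one of multiplicity $3$. Consequently the counting condition you write down with the alternative ``$3\mid v_p(b)$ or $p\in\mathcal P_K$'' is not what Corollary~\ref{cor:onlysplitoverdegree1} gives in this geometry. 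This particular error is harmless for the stated bounds because you ultimately fall back on Theorems~\ref{t:ONE} and~\ref{t:1}, but it would matter if you were trying to extract the implicit counting problem or an asymptotic.
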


We will return to this surface in Section \ref{s:7.4} to create another interesting example. There we will assume that the multiple fibre lies over $0$ and the remaining non-split fibre lies over $\infty$.

\begin{proof}[Proof of Theorem \ref{t:SEVEN}]
Let 
$E/\QQ$ be an elliptic curve with $E(\QQ)_{\text{tors}} =\ZZ/9\ZZ$. Let 
$K/\QQ$ be a cyclic cubic number field $K/\QQ$, such that
$\rank E(\QQ) < \rank E(K)$.
We will fix
\begin{itemize}
\item[(i)] a generator $\sigma \in \Gal(K/\QQ)$,
\item[(ii)] a generator $A \in E(\QQ)_{\text{tors}}$,
\item[(iii)] $B \in E(K)\setminus E(\QQ)$ such that $B+\sigma(B) + \sigma^2(B) = O \in E(\QQ)$, and any
\item[(iv)] $C \in E(K) \setminus E(\QQ)$.
\end{itemize}

With this notation in mind,  consider the nine points
\[
P_i = \sigma^i(C), \quad Q_i = \sigma^i(-2C + B +A) \quad \text{ and } R_i = \sigma^i(C+2A).
\]
For general choices, we find that 
$\Bl_{P_i,Q_i,R_i} \mathbb P^2$ is a Halphen surface of degree $3$. In particular,
there is a smooth cubic through the nine points, which becomes the geometrically irreducible triple fibre on $X$. Moreover, we have 
$$
\sum_i (P_i+Q_i+R_i) - P_j + R_j = O,
$$ 
so that  there is a cubic curve which passes through all nine points except $P_j$ and has a singularity at $R_j$. The union of these three curves becomes the the non-split $I_3$-fibre, split by $K$.

For the lower bound we may apply Theorem \ref{t:ONE} and the upper bound follows from  Theorem \ref{t:1}.
\end{proof}

\subsection{A non-split fibre over a point of higher degree}\label{s:7.4}

Our final result concerns  a surface of Halphen type, with a fibration over $\mathbb P^1$ that has one multiple fibre and a non-split fibre over a degree $2$ point. Our local solubility criteria do not apply to  this case in general, but we are nonetheless able to  deduce explicit criteria.

Consider the Halphen surface $\pi:X \to \PP^1$ from Theorem
\ref{t:SEVEN} 
with $m=3$, 
with a multiple fibre over $0$ and a non-split fibre over $\infty$ split by a Galois cubic extension $K/\QQ$. Let $\pi':X' \to \PP^1$ be the normalisation of the pullback of $\pi$ along the morphism $\theta \colon \mathbb P^1 \to \PP^1$ given by $[u \colon v] \mapsto [u^2 \colon u^2+v^2]$.
We claim that the surface $X'$ has a unique multiple fibre over $u=0$, whose multiplicity is $3$, and that the only other non-split fibre lies over the degree $2$ point $u^2+v^2=0$, and is split by $K$. To see this we note that the fibres of the pullback of $X$ are precisely the fibres of $X'$, and   normalisation only changes the fibres over $0$ and $\infty$. The multiplicities of the new fibres can then be computed using Proposition~\ref{prop:multiplicities under normalisation}.
Note that 
$\partial_{\pi'} =\frac{2}{3}[0]$ and 
$
\Delta(\pi')=1-\delta_{u^2+v^2}(\pi')=\frac{2}{3}.
$ 
We shall now prove the following result, which is easily seen to agree with the prediction in Conjecture \ref{con2}.

\begin{thm}\label{t:FIVE}
For the surface $\pi':X'\to \PP^1$ as above,
there exists a finite set $S$ such that
\[
\frac{B^{\frac43}}{(\log B)^{\frac23}} \ll N_{\text{loc},S}(\pi',B) \ll  \frac{B^{\frac43}}{(\log B)^{\frac23}}.
\]
\end{thm}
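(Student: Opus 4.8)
The strategy is to feed $\pi'$ into the sparsity machinery of Section~\ref{s:sparsity} in order to translate everywhere local solubility of the fibres $X'_Q$ into a counting problem about values of the binary form $u^6+v^2$, and then to quote the estimate of Friedlander and Iwaniec for that problem. First I would replace $X'$ by an sncd model and enlarge $S$ so that Theorem~\ref{thm:lifting} (equivalently Corollary~\ref{cor:sparsity}) applies; by the remark following Theorem~\ref{thm:lifting} the only non-split fibres that then impose conditions are the multiple fibre over $u=0$ and the $I_3$-fibre over the degree-$2$ point $V(u^2+v^2)$. Write $Q=(a:b)$ with $a,b$ coprime integers. The fibre over $u=0$ is a smooth genus-one curve of multiplicity $3$, so Theorem~\ref{thm:lifting} forces $3\mid v_p(a)$ for every $p\notin S$; hence $a=a_0c^3$ for one of finitely many $S$-parts $a_0$ and some $c\in\NN$. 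At $V(u^2+v^2)$, whose residue field is $\QQ(i)$, the components of minimal multiplicity are the three conjugate lines, permuted cyclically by $\Gal(K\QQ(i)/\QQ(i))$, to which the sncd resolution adds an exceptional line of multiplicity $3$; since coprimality of $a,b$ forces every prime divisor of $a^2+b^2$ to split in $\QQ(i)$, Corollary~\ref{cor:sparsity} gives that for each such prime $p\notin S$ either $p$ splits completely in $K$ or $3\mid v_p(a^2+b^2)$.

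For the upper bound I would fix $a_0$, rename the cube root $c$, and note that $a^2+b^2=a_0^2c^6+b^2\le x$ with $x\asymp B^2$, while $|c|\le B^{1/3}$. The pairs $(c,b)$ for which every prime divisor of $a_0^2c^6+b^2$ outside $S$ splits completely in $K$ are counted by \cite[Thm.~11.31]{FI} (the case $a_0=1$ being the quoted statement, and the general case following by the same method), which gives $\ll x^{2/3}(\log x)^{-2/3}\asymp B^{4/3}(\log B)^{-2/3}$. The remaining pairs carry a prime $p_0\notin S$ that does not split in $K$ with $p_0^3\mid a_0^2c^6+b^2$; collecting all such prime powers into a cube-full divisor $d$ and applying a Selberg-type upper bound for the number of $(c,b)$ with $d\mid a_0^2c^6+b^2$ whose complementary factor has all prime divisors splitting in $K$, one obtains a contribution $\ll x^{2/3}(\log x)^{-2/3}\,\rho(d)/d$ with $\rho$ multiplicative and $\rho(p^k)\ll p^k$, and since $\sum_{d\ \text{cube-full}}\rho(d)/d<\infty$ this tail is again $\ll B^{4/3}(\log B)^{-2/3}$. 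Summing over the finitely many $a_0$ yields $N_{\text{loc},S}(\pi',B)\ll B^{4/3}(\log B)^{-2/3}$.

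For the lower bound I would produce rational points directly, taking $Q=(c^3:b)$ with $c,b$ coprime positive integers, $c\le B^{1/3}$, $b\le B$, and every prime divisor of $c^6+b^2$ splitting completely in $K$; by the lower bound in \cite[Thm.~11.31]{FI} there are $\gg B^{4/3}(\log B)^{-2/3}$ such pairs. It remains to check $X'_Q(\QQ_p)\neq\emptyset$ for all $p\notin S$, after a final enlargement of $S$ absorbing $2$, $3$, the discriminantal primes and the places where the reasoning in the proof of Corollary~\ref{cor:onlysplitoverdegree1} requires it. For $p\notin S$ with $p\nmid c$ and $p\nmid c^6+b^2$ the reduction $\bar Q$ lies over a split fibre, which is geometrically integral, so Lang--Weil produces a smooth $\mathbb F_p$-point that lifts by Proposition~\ref{prop:loghensel}. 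If $p\mid c$ then $\bar Q$ lies over $u=0$ and $v_p(c^3)\in 3\NN$, and the multiple fibre there is geometrically integral of multiplicity $3$, so again Lang--Weil together with Theorem~\ref{thm:lifting} give a liftable point. If $p\mid c^6+b^2$ then $p$ splits completely in $K$, so $\frob_p$ fixes one of the three conjugate lines $L$ over the relevant point; being an $\mathbb F_p$-rational $\PP^1$ that meets the rest of the fibre in finitely many points, $L$ carries an $\mathbb F_p$-point of $\mathcal X'_{\bar Q}$ lying on a single multiplicity-one component, hence liftable by Theorem~\ref{thm:lifting}. Real solubility is automatic from a real Weierstrass model of the elliptic surface, so $N_{\text{loc},S}(\pi',B)\gg B^{4/3}(\log B)^{-2/3}$.

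The main obstacle is the precise determination of the local solubility conditions at the degree-$2$ point: Corollary~\ref{cor:onlysplitoverdegree1} is stated only for non-split fibres over rational points, so one must run Theorem~\ref{thm:lifting} directly over the places above each rational prime dividing $a^2+b^2$, carefully tracking which of the three conjugate lines, and which component of the sncd model of the $I_3$-fibre, are fixed by Frobenius and with what multiplicity. Once this is in place the counting problem is exactly the one solved by \cite[Thm.~11.31]{FI}, and no genuinely new analytic input is needed beyond the routine bookkeeping for the cube-full tail in the upper bound.
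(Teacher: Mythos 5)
Your proposal has a genuine gap, and it is one you yourself flag as ``the main obstacle'': the precise determination of the local solubility conditions at the degree-$2$ point $V(u^2+v^2)$. The paper sidesteps this difficulty entirely by a key observation you miss. Namely, for all but finitely many $(u:v)\in\PP^1(\QQ)$, the fibre of $\pi':X'\to\PP^1$ over $(u:v)$ is isomorphic to the fibre of the \emph{original} Halphen surface $\pi:X\to\PP^1$ over $\theta((u:v))=(u^2:u^2+v^2)$; and $X$ has its non-split fibres over the rational points $0$ and $\infty$, so Corollary~\ref{cor:onlysplitoverdegree1} applies \emph{directly to $X$}, with no need to run Theorem~\ref{thm:lifting} over a degree-$2$ closed point. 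Unwinding the resulting conditions, and using that $3\mid v_p(u^2)$ if and only if $3\mid v_p(u)$, gives exactly the counting problem
\[
p\not\in S \Rightarrow 3\mid v_p(u),\qquad
\big[p\not\in S,\ p\mid u^2+v^2\big]\Rightarrow p\in\mathcal{P}_K,
\]
after which the restriction to coprime $(u,v)=(c^3,b)$ and \cite[Thm.~11.31]{FI} deliver the lower bound $\gg B^{4/3}(\log B)^{-2/3}$. Your plan to instead run Theorem~\ref{thm:lifting} over the places above rational primes dividing $a^2+b^2$, tracking which conjugate lines of the sncd model of the $I_3$-fibre Frobenius fixes and with what multiplicity, is not carried out in the proposal, so the central geometric input is missing.

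For the upper bound there is a second, smaller issue: the paper simply cites Theorem~\ref{t:1}, which applies because $\pi'$ is a standard fibration with a unique multiple fibre over the rational point $u=0$ and gives $N_{\text{loc}}(\pi',B)\ll B^{4/3}(\log B)^{-\Delta(\pi')}$ with $\Delta(\pi')=2/3$. Your alternative route (Friedlander--Iwaniec plus a Selberg-type sieve to control the cube-full tail) is not wrong in spirit, but it reconstructs from scratch an estimate the paper has already established, and it also rests on the same unresolved local-solubility analysis at the degree-$2$ point that you identify as an obstacle. In short, the analytic core of your argument (restrict to $u=c^3$, demand all prime divisors of $u^6+v^2$ split in $K$, invoke \cite[Thm.~11.31]{FI}) is exactly what the paper uses, but the geometric reduction to that counting problem---the pull-back along $\theta$ to $X$---is the missing idea.
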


\begin{proof}
The upper bound follows directly from
Theorem \ref{t:1}. To prove the lower bound, we note that 
for all  but finitely many points $x \in \PP^1(\QQ)$, the fibre of $X' \to \PP^1$ is isomorphic to the fibre of $X \to \PP^1$ over $\theta(x) \in \PP^1(\QQ)$. Hence we can apply
the criterion in Corollary~\ref{cor:onlysplitoverdegree1} to determine  local solubility for $X$.
Noting that $v_p(u^2)$ is divisible by $3$ precisely if this is true for $v_p(u)$, we find that $N_{\text{loc},S}(\pi',B)$ is 
\begin{align*}
\frac12 \#
\left\{  (u,v)\in \ZZp^2: 
\begin{array}{l}
|u|,|v|\leq B\\
p\not\in S \Rightarrow 3\mid v_p(u)\\
\big[p\not\in S \text{ and }
p\mid u^2+v^2\big] \Rightarrow p\in \mathcal{P}_K 
\end{array}
\right\} + O(1).
\end{align*}
On restricting to 
positive coprime $u,v$ 
and demanding that $u$ is cube, we arrive at the lower bound
$$N_{\text{loc},S}(\pi',B)\geq \frac{1}{2}M(B)+O(1),
$$ 
where
\begin{align*}
M(B)=\#
\left\{  (u,v)\in \ZZp^2: 
\begin{array}{l}
0\leq u^3,v\leq B\\
p\mid u^6+v^2  \Rightarrow p\in \mathcal{P}_K
\end{array}
\right\}.
\end{align*}
Note that $u^3,v\leq B$ whenever $u^6+v^2\leq B^2$. Hence
$$
M(B)\geq \#
\left\{  (u,v)\in \ZZ_{\geq 0}^2: 
\begin{array}{l}
\gcd(u,v)=1, ~u^6+v^2\leq B^2\\
p\mid u^6+v^2  \Rightarrow p\in \mathcal{P}_K
\end{array}
\right\}.
$$
The right hand side is exactly the quantity estimated via the $\beta$-sieve
 by Friedlander and Iwaniec \cite[Thm.~11.31]{FI}, with the outcome that 
$$
M(B)\gg \left(\frac{B^2}{\log(B^2)}\right)^{\frac{2}{3}}.
$$
The statement of the theorem now follows.
\end{proof}

\end{document}